\newcommand\BibTeX{{\rmfamily B\kern-.05em \textsc{i\kern-.025em b}\kern-.08em
T\kern-.1667em\lower.7ex\hbox{E}\kern-.125emX}}
\newtheorem{remark}{Remark}
\newtheorem{lemma}{Lemma}
\newtheorem{proposition}{Proposition}
\newcommand \bn {\boldsymbol{\mathrm{n}}}
\newcommand \bw {\boldsymbol{\mathrm{w}}}
\newcommand \bh {\boldsymbol{\mathrm{h}}}
\newcommand \bu {\boldsymbol{\mathrm{u}}}
\newcommand \bv {\boldsymbol{\mathrm{v}}}
\newcommand \bg {\boldsymbol{\mathrm{g}}}
\newcommand \bff {\boldsymbol{\mathrm{f}}}
\newcommand \blambda {\boldsymbol{\mathrm{\lambda}}}
\newcommand \bmu {\boldsymbol{\mathrm{\mu}}}
\newcommand \bvarphi {\boldsymbol{\mathrm{\varphi}}}
\newcommand \bpsi {\boldsymbol{\mathrm{\psi}}}
\newcommand \p {\partial}
\newcommand \x {\mathrm{x}}
\newcommand \y {\mathrm{y}}
\newcommand \R {\mathbb{R}}
\renewcommand \L {\mathrm{L}}
\renewcommand \H {\mathrm{H}}
\renewcommand \d {\mathrm{d}}
\renewcommand \div {\mathrm{div}}
\newcommand \trace {\mathrm{trace}}
\gdef\SetFigFont#1#2#3#4#5{%
  \reset@font\fontsize{#1}{#2pt}%
  \fontfamily{#3}\fontseries{#4}\fontshape{#5}%
  \selectfont}%
\begin{document}

\runningheads{S.~Court, M.~Fourni\'e, A.~Lozinski}{A fictitious domain approach for the Stokes problem}

\title{A fictitious domain approach for the Stokes problem based on the extended finite element method\footnotemark[2]}

\author{S\'ebastien Court\corrauth, Michel Fourni\'e, Alexei Lozinski}

\address{Institut de Math\'ematiques de Toulouse, Universit\'e de Toulouse, France.}

\cgsn{This work is partially supported by the foundation STAE in the context of the RTRA platform SMARTWING, and the ANR-project CISIFS}{09-BLAN-0213-03.}

\corraddr{sebastien.court@math.univ-toulouse.fr}

\begin{abstract}
In the present work, we propose to extend to the Stokes problem a fictitious domain approach inspired by eXtended Finite Element Method \cite{MoesD} and studied for the Poisson problem in \cite{HaslR}. The method allows computations in domains whose boundaries do not match. A mixed finite element method is used for the fluid flow. The interface between the fluid and the structure is localized by a level-set function. Dirichlet boundary conditions are taken into account using Lagrange multiplier. A stabilization term is introduced to improve the approximation of the normal trace of the Cauchy stress tensor at the interface and avoid the inf-sup condition between the spaces for the velocity and the Lagrange multiplier. Convergence analysis is given and several numerical tests are performed to illustrate the capabilities of the method.
\end{abstract}

\keywords{Fictitious domain, Xfem, Mixed method, Stabilization technique, Fluid-structure interactions.}

\maketitle


\vspace{-6pt}

\section{Introduction}
\textcolor{black}{
Fluid-Structure Interactions (FSI) are of great relevance in many fields of applied scientific and engineering disciplines. A comprehensive study of such problems remains a challenge and justifies the attention made over the last decades to propose efficient and robust numerical methods. We refer to \cite{Hou&Wang&Layton} where different numerical procedures to solve the FSI problems are reviewed. One classification of FSI solution procedures can be based upon the treatment of the meshes with conforming or non-conforming mesh methods. For the first ones, meshes are conformed to the interface where the physical boundary conditions are imposed \cite{LT, SMSTT0, SST}. As the geometry of the fluid domain changes through the time, re-meshing is needed, what is excessively time-consuming, in particular for complex systems.\\
In the present paper, we are interested in non-conforming mesh methods with a fictitious domain approach where the mesh is cut by the boundary. Most of the non-conforming mesh methods are based upon the framework of the immersed methods where force-equivalent terms are added to the fluid equations in order to represent the fluid-structure interaction \cite{Peskinacta, Mittal&Iaccarino}. Many related numerical methods have been developed, in particular the popular distributed Lagrange multiplier method, introduced for rigid bodies moving in an incompressible flow \cite{Glowinski}. In this method, the fluid domain is extended to cover the rigid domain where the fluid velocity is required to be equal to the rigid body velocity. This constraint is enforced by using distributed Lagrange multipliers, which should be approximated on a mesh covering the structure and sufficiently coarse with respect to the mesh used for the fluid velocity, in order to satisfy the inf-sup condition.\\
More recently eXtended Finite Element Method (XFEM) introduced by Mo\"{e}s, Dolbow and Belytschko in \cite{MoesD} (see a review of such methods in \cite{reviewXfem}) has been adapted to FSI problems in \cite{MoesB, SukumarC, Gerstenberger2008, Choi2010}. The idea is similar to the fictitious domain / Lagrange multiplier method mentioned above, but the fluid velocity is no longer extended inside the structure domain and its equality with the structure velocity is enforced by a Lagrange multiplier only on the fluid-structure interface. One thus gets rid of unnecessary fluid unknowns and moreover one easily recovers the normal trace of the Cauchy stress tensor on the interface. We note that XFEM has been originally developed for problems in structural mechanics mostly in the context of cracked domains, see for example \cite{HaslR, MoesG, Stazi, SukumarM, Stolarska}. The specificity of the method is that it combines a level-set representation of the geometry of the crack with an enrichment of a finite element space by singular and discontinuous functions. Several strategies can be considered in order to improve the original XFEM. Some of these strategies are mathematically analyzed in \cite{Laborde, Chahine}.\\
In the context of fluid-structure interactions, the difficulty that present the applications of such techniques lies in the choice of the Lagrange multiplier space used in order to take into account the interface, which is not trivial due to the fact that the interface cuts the mesh \cite{Bechet2009}. Indeed, the multiplier space, besides having good approximation properties, should satisfy an uniform inf-sup condition (similarly to more traditional fictitious domain methods \cite{Girault}). In a straightforward discretization, it implies that the mesh for the multiplier should be sufficiently coarse in comparison with the mesh for the primal variables. Thus, the natural mesh given by the points of intersection of the interface with the global mesh cannot be used directly. An algorithm to construct a multiplier space satisfying the inf-sup condition is developed in \cite{Bechet2009}, but its implementation can be difficult in practice. It may be thus preferable to work on the natural, easily constructible mesh, as outlined above.  This is achieved in a stabilized version of the method proposed in \cite{HaslR} (an extension to the contact problems in elastostatics is also available in \cite{HildR}). In the present paper, we are interested in extending the method of \cite{HaslR} to the Stokes problem. An important feature of this method (based on the XFEM approach, similarly to \cite{Gerstenberger2008, Choi2010}) is that the Lagrange multiplier is identified with the normal trace of the Cauchy stress tensor $\sigma(\bu,p)\bn$ at the interface. With the aid of the stabilization technique presented in this present paper (never studied in that context in our knowledge), we have a good numerical approximation of this quantity, that is crucial in FSI since it gives the force exerted by the viscous fluid on the structure. By the way we note that alternative methods based on the work of Nitsche \cite{Nitsche} (such as \cite{Burman1, Burman3} in the context of the Poisson problem and \cite{Massing} in the context of the Stokes problems) do not introduce the Lagrange multiplier and thus do not necessarily provide a good numerical approximation of this force.\\
The outline of the paper is as follows. The continuous problem is set in section \ref{secStokes} in the weak sense, and the functional spaces are given. We recall the corresponding variational formulation with the introduction of a Lagrange multiplier to impose the boundary condition in the interface. Next in section \ref{seccut} the fictitious domain method is introduced. In particular the discrete spaces are defined and the discrete variational problem is studied without the stabilization technique. This latter - which is an augmented Lagrangian method - is introduced in section \ref{secstab}, and we show that theoretically it enables us to recover the convergence for the multiplier associated with the Dirichlet condition (see Lemma \ref{lemmainfsup}). The convergence analysis for the stabilized method is given in section \ref{seccvstab} and optimal error estimates are proved. Section \ref{secnum} is devoted to numerical tests. Rates of convergence are computed with or without stabilization and the behavior of the method is studied for different geometric configurations. Moreover we compare our method with a classical one which uses a boundary-fitted mesh. Technical aspects of the implementation are discussed in section \ref{seccomments}. Finally in section \ref{secFSI} we perform simulations in a simplified unsteady case, what gives a glimpse of the future perspectives. The conclusion is given in section \ref{secCCL}.
}


\section{Setting of the problem} \label{secStokes}
In a bounded domain of $\R^2$, denoted by $\mathcal{O}$, we consider a full solid immersed in a viscous incompressible fluid. The domain occupied by the solid is denoted by $\mathcal{S}$, and we denote by $\Gamma$ its boundary. The fluid surrounding the structure occupies the domain $\mathcal{O} \setminus \overline{\mathcal{S}} = \mathcal{F}$, \textcolor{black}{where $\overline{\mathcal{S}}$ denotes $\mathcal{S} \cup \partial \mathcal{S}$} (see figure \ref{figDomain}).

\begin{figure}[!h]
\begin{center}
\scalebox{0.4}{
\begin{picture}(0,0)%
\includegraphics{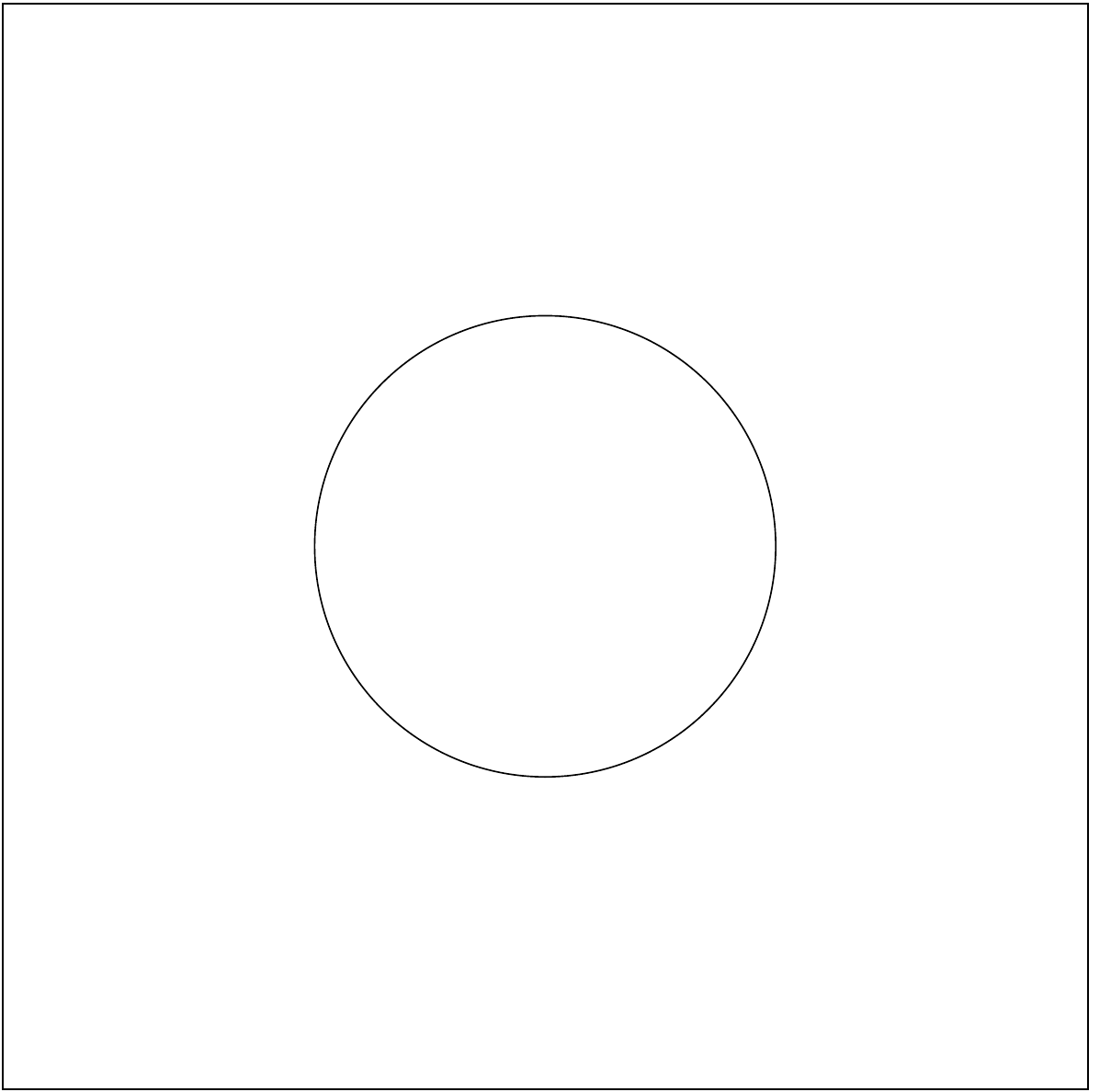}%
\end{picture}%
\setlength{\unitlength}{4144sp}%
\begin{picture}(5517,5424)(889,-5473)
\put(1621,-961){\makebox(0,0)[lb]{\smash{{\SetFigFont{20}{24.0}{\familydefault}{\mddefault}{\updefault}{\color[rgb]{0,0,0}$\mathcal{F}$}%
}}}}
\put(3286,-2716){\makebox(0,0)[lb]{\smash{{\SetFigFont{20}{24.0}{\familydefault}{\mddefault}{\updefault}{\color[rgb]{0,0,0}$\mathcal{S}$}%
}}}}
\put(6391,-556){\makebox(0,0)[lb]{\smash{{\SetFigFont{20}{24.0}{\familydefault}{\mddefault}{\updefault}{\color[rgb]{0,0,0}$\partial \mathcal{O}$}%
}}}}
\put(4726,-3346){\makebox(0,0)[lb]{\smash{{\SetFigFont{20}{24.0}{\familydefault}{\mddefault}{\updefault}{\color[rgb]{0,0,0} $\partial \mathcal{S} = \Gamma$}%
}}}}
\put(4262,-2825){\makebox(0,0)[lb]{\smash{{\SetFigFont{20}{24.0}{\familydefault}{\mddefault}{\updefault}{\color[rgb]{0,0,0} $\longleftarrow$}%
}}}}
\put(4420,-2660){\makebox(0,0)[lb]{\smash{{\SetFigFont{20}{24.0}{\familydefault}{\mddefault}{\updefault}{\color[rgb]{0,0,0} $\bn$}%
}}}}
\end{picture}%
 }
\caption{Domain for fluid and structure.} \label{figDomain}
\end{center}
\end{figure}

We denote by $\bu$ and $p$ the velocity field and the pressure of the fluid respectively. In this paper, we are interested in the following Stokes problem
\begin{eqnarray}
- \nu \Delta \bu + \nabla p & = & \bff \quad \text{in } \mathcal{F}, \label{system1} \\
\div \ \bu & = & 0 \quad \text{in }\mathcal{F}, \label{system2} \\
\bu & = & 0 \quad \text{on } \p \mathcal{O}, \label{system3} \\
\bu & = & \bg \quad \text{on } \Gamma, \label{system5}
\end{eqnarray}
where $\bff\in \mathbf{L}^2(\mathcal{F})$, $\bg\in \mathbf{H}^{1/2}(\Gamma)$.
The boundary conditions on $\Gamma$ is nonhomogeneous.
The homogeneous Dirichlet condition we consider on $\p \mathcal{O}$ has a physical sense, but can be replaced by a nonhomogeneous one, without more difficulty.\\
With regard to the incompressibility condition, the boundary datum $\bf g$ must obey
\begin{eqnarray*}
\int_{\Gamma} \bg \cdot \bn \d \Gamma & = & 0.
\end{eqnarray*}

We consider this nonhomogeneous condition as a Dirichlet one imposed on $\Gamma$. Notice that other boundary conditions are possible on $\Gamma$, such as Neumann conditions, as it is done in \cite{HaslR} where mixed boundary conditions are considered. Equation \eqref{system1} is the linearized form, in the stationary case, of the underlying incompressible Navier-Stokes equations $$ \frac{\p \bu}{\p t} + (\bu\cdot \nabla) \bu - \nu \Delta \bu + \nabla p  =  \bff \quad \text{in } \mathcal{F}.$$
The scalar constant $\nu$ denotes the dynamic viscosity of the fluid. In our presentation, for more simplicity, we only consider the stationary case, and the solid is supposed to be fixed.\\
The solution of \eqref{system1}-\eqref{system5} can be viewed as the stationary point of the Lagrangian
\begin{eqnarray}
L_0(\bu,p,\blambda) & = & \nu \int_{\mathcal{F}} \left|D(\bu) \right|^2\d \mathcal{F} - \int_{\mathcal{F}}p\div\ \bu \d \mathcal{F} - \int_{\mathcal{F}}\bff\cdot \bu\d \Gamma - \int_{\Gamma} \blambda \cdot (\bu - \bg) \d \Gamma. \nonumber \\ \label{Lag0}
\end{eqnarray}
Note that we should assume some additional smoothness in \eqref{Lag0} to make sense, for example $\bu\in \mathbf{H}^2(\mathcal{F})$, $p\in \H^1(\mathcal{F})$, $\blambda\in \mathbf{L}^2(\Gamma)$. The exact solution normally has this smoothness provided that $\bff \in \mathbf{L}^2(\mathcal{F})$ and $\bg \in \mathbf{H}^{3/2}(\Gamma)$.\\
The multiplier $\blambda$, associated with the Dirichlet condition \eqref{system5}, represents the normal trace on $\Gamma$ of the Cauchy stress tensor. Its expression is given by
\begin{eqnarray*}
\blambda(\bu,p)  =  \sigma(\bu,p)\bn
 =  2\nu D(\bu)\bn -p\bn,
\end{eqnarray*}
where
\begin{eqnarray*}
D(\bu) & = & \frac{1}{2} \left(\nabla \bu + \nabla \bu^T \right).
\end{eqnarray*}
The vector $\bn$ denotes the outward unit normal vector to $\p \mathcal{F}$ (see figure \ref{figDomain}).

\begin{remark}
Notice that if we have the incompressibility condition \eqref{system2}, then, as a multiplier for the Dirichlet condition on $\Gamma$, considering $\sigma(\bu,p)\bn$ is equivalent to considering $\displaystyle \nu\frac{\p \bu}{\p \bn} - p \bn$, as it is shown in \cite{Gunzburger} or \cite{Girault0}. It is mainly due to the equality $ \displaystyle \div \left(\nabla \bu + \nabla \bu^T \right) = \Delta \bu$, when $\div \ \bu = 0$.\\
\end{remark}

A finite element method based on the weak formulation derived from \eqref{Lag0} does not guarantee, {\it a priori}, the convergence for the quantity $\sigma(\bu,p)\bn$ in $\mathbf{L}^2(\Gamma)$. As it has been done in \cite{Barbosa1, Barbosa2}, our approach consists in considering an augmented Lagrangian in adding a quadratic term to the one given in \eqref{Lag0}, as follows
\begin{eqnarray}
L(\bu,p,\blambda) & = & L_0(\bu,p,\blambda) - \frac{\gamma}{2}\int_{\Gamma}\left|\blambda - \sigma(\bu,p)\bn \right|^2\d \Gamma \label{Lag1}.
\end{eqnarray}
The goal is to recover the optimal rate of convergence for the multiplier $\blambda$. The constant $\gamma$ represents a stabilization parameter (see numerical investigations in section \ref{secgamma}). It has to be chosen judiciously.\\


Let us give the functional spaces we use for the continuous problem (\ref{system1})-(\ref{system5}). For the velocity $\bu$ we consider the following spaces
\begin{eqnarray*}
\mathbf{V} = \left\{ \bv\in \mathbf{H}^1(\mathcal{F}) \mid \bv=0 \text{ on } \p \mathcal{O} \right\}, & \quad & \mathbf{V}_0 = \mathbf{H}_0^1(\mathcal{F}), \\
\mathbf{V}^{\#}  =  \left\{ \bv\in \mathbf{V} \mid \div \ \bv = 0 \ \text{in } \mathcal{F} \right\},
& \quad & \mathbf{V}_0^{\#}  =  \left\{ \bv\in \mathbf{H}^1_0(\mathcal{F}) \mid \div \ \bv = 0 \ \text{in } \mathcal{F} \right\}.
\end{eqnarray*}
The pressure $p$ is viewed as a multiplier for the incompressibility condition $\div \ \bu = 0$, and belongs to $\L^2(\mathcal{F})$. It is determined up to a constant that we fix such that $p$ belongs to
\begin{eqnarray*}
Q = \L^2_0(\mathcal{F}) = \left\{p \in \L^2(\mathcal{F}) \mid \int_{ \mathcal{F}}p\ \d \mathcal{F} = 0 \right\}.
\end{eqnarray*}
The functional space for the multiplier is chosen as
\begin{eqnarray*}
\mathbf{W} = \mathbf{H}^{-1/2}(\Gamma) = \left(\mathbf{H}^{1/2}(\Gamma) \right)'.
\end{eqnarray*}
\begin{remark}
If we want to impose other boundary conditions, as in \cite{HaslR} for instance, the functional spaces $\mathbf{V}_0$ and $\mathbf{H}^{1/2}(\Gamma)$ must be adapted, but there is no particular difficulty.\\
\end{remark}

The weak formulation of problem \eqref{system1}-\eqref{system5} is given by:
\begin{eqnarray}
& & \text{Find } (\bu,p,\blambda) \in \mathbf{V} \times Q \times \mathbf{W} \text{ such that }\nonumber \\
& & \left\{ \begin{array} {lll}
a(\bu,\bv) + b(\bv,p) + c(\bv,\blambda) = \mathcal{L}(\bv) & \quad & \forall \bv \in \mathbf{V}, \\
b(\bu,q)  = 0 & \quad & \forall q \in Q, \\
c(\bu,\bmu) = \mathcal{G}(\bmu), & \quad & \forall \bmu \in \mathbf{W},
\end{array} \right. \label{pbexact}
\end{eqnarray}
where
\begin{eqnarray}
a(\bu,\bv) & = & 2\nu \int_{\mathcal{F}} D(\bu):D(\bv)\d \mathcal{F}, \label{defa}\\
b(\bu,q) & = & -\int_{\mathcal{F}}q\div \ \bu \d \mathcal{F}, \label{defb} \\
c(\bu,\bmu) & = & -\int_{\Gamma}\bmu \cdot \bu \d \Gamma, \label{defc} \\
\mathcal{L}(\bv) & = & \int_{\mathcal{F}}\bff\cdot \bv\d \mathcal{F}, \label{defaL} \\
\mathcal{G}(\bmu) & = & -\int_{\Gamma} \bmu \cdot \bg \d \Gamma. \label{defG}
\end{eqnarray}
The expression $D(\bu):D(\bv) = \trace \left(D(\bu)D(\bv)^T\right)$ denotes the classical inner product for matrices. Let us note that Problem \eqref{pbexact} is well-posed (see \cite{Gunzburger} for instance). The solution of Problem \eqref{system1}-\eqref{system5} can be viewed as the stationary point of the Lagrangian on $\mathbf{V} \times Q \times \mathbf{W}$
\begin{eqnarray}
L_0(\bu,p,\blambda) & = & \nu \int_{\mathcal{F}} \left|D(\bu) \right|^2\d \mathcal{F} - \int_{\mathcal{F}}p\div\ \bu \d \mathcal{F} - \int_{\mathcal{F}}\bff\cdot \bu \d \mathcal{F}  - \int_{\Gamma} \blambda \cdot (\bu-\bg) \d \Gamma. \nonumber \\ \label{Lago}
\end{eqnarray}


\section{The fictitious domain method without stabilization} \label{seccut}
\subsection{Presentation of the method} \label{seccutpres}
The fictitious domain for the fluid is considered on the whole domain $\mathcal{O}$. Let us introduce three discrete finite element spaces, $\tilde{\mathbf{V}}^h \subset \mathbf{H}^1(\mathcal{O})$ and $\tilde{Q}^h \subset \L^2_0(\mathcal{O})$ on the fictitious domain, and $\tilde{\mathbf{W}}^h \subset \mathbf{L}^2(\mathcal{O})$. Since $\mathcal{O}$ can be a rectangular domain, this spaces can be defined on the same structured mesh, that can be chosen uniform (see figure \ref{meshimage2}). The construction of the mesh is highly simplified (no particular mesh is required). We set
\begin{eqnarray}
\tilde{\mathbf{V}}^h & = & \left\{\bv^h \in C(\overline{\mathcal{O}})\mid \bv^h_{\left| \p \mathcal{O}\right.} = 0, \  \bv^h_{\left| T\right.} \in P(T), \ \forall T \in \mathcal{T}^h \right\}, \label{defvtilde}
\end{eqnarray}
where $P(T)$ is a finite dimensional space of regular functions such that $P(T) \supseteq P_k(T)$ for some integer $k \geq 1$. For more details, see \cite{Ern} for instance. The mesh parameter stands for $\displaystyle h = \max_{T\in \mathcal{T}^h} h_T$, where $h_T$ is the diameter of $T$.\\

\begin{figure}[!h]
\centering
\includegraphics[scale = 0.50]{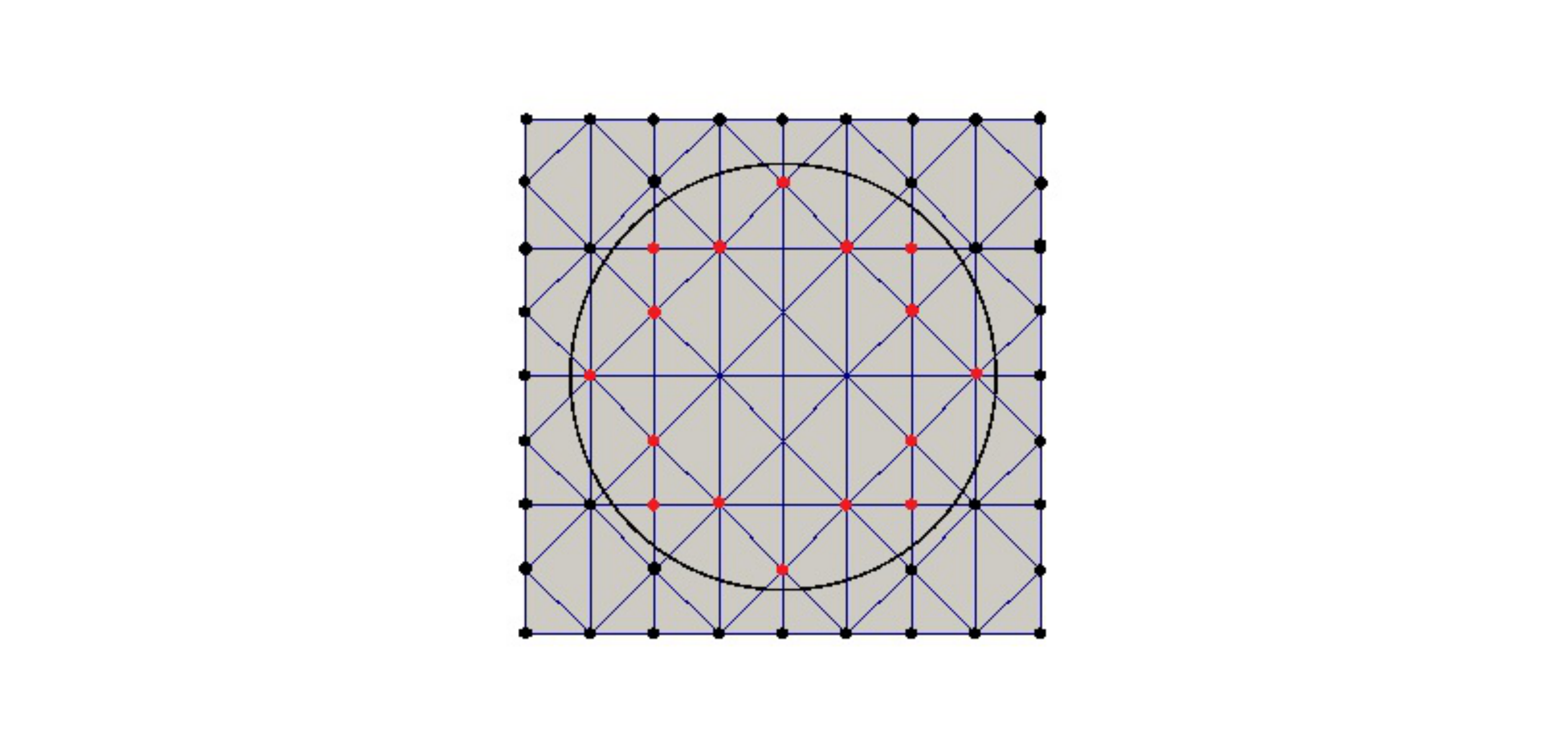}
\caption{An example of a mesh on a fictitious domain. The standard degrees of freedom are in black, the virtual ones are in red, and the remaining ones are removed.}
\label{meshimage2}
\end{figure}

\FloatBarrier

Then we define
\begin{eqnarray*}
\mathbf{V}^h := \tilde{\mathbf{V}}^h_{\left| \mathcal{F} \right.}, \quad Q^h := \tilde{Q}^h_{\left|\mathcal{F}\right.}, \quad  \mathbf{W}^h := \tilde{\mathbf{W}}^h_{\left| \Gamma \right.},
\end{eqnarray*}
which are natural discretizations of $\mathbf{V}$, $\L^2(\mathcal{F})$ and $\mathbf{H}^{-1/2}(\Gamma)$, respectively. \textcolor{black}{This approach is equivalent to XFEM as proposed in \cite{Choi2010} or \cite{Gerstenberger2008} where the standard FEM basis functions are multiplied by the Heaviside function ($H(\x) = 1$ for $\x \in \mathcal{F}$ and $H(\x)=0$ for $\x\in \mathcal{O} \setminus \mathcal{F}$)and the products are substituted in the variational formulation of the problem. Thus the degrees of freedom inside the fluid domain $\mathcal{F}$ are used in the same way as in the standard FEM, whereas the degrees of freedom in the solid domain $\mathcal{S}$ at the vertices of the elements cut by the interface (the so called virtual degrees of freedom) do not define the field variable at these nodes, but they are necessary to define the fields on $\mathcal{F}$ and to compute the integrals over $\mathcal{F}$. The remaining degrees of freedom, corresponding to the basis functions with support completely outside of the fluid, are eliminated (see figure \ref{meshimage2}). We refer to the papers mentioned above for more details.}


\begin{figure}[!h]
\centering
\includegraphics[scale = 0.50]{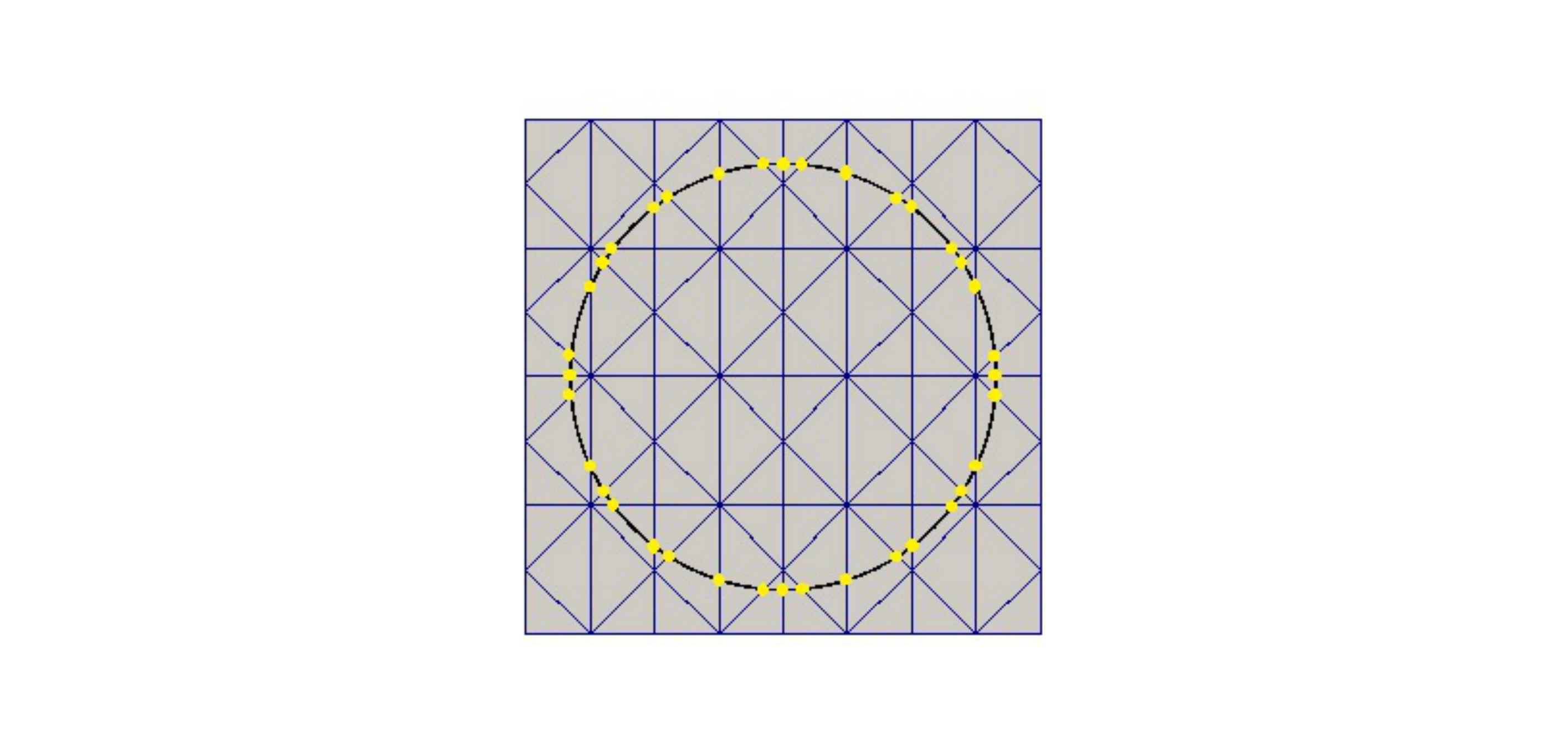}
\caption{Base nodes used for the multiplier space $\mathbf{W}^h$.}
\label{meshimage3}
\end{figure}

\FloatBarrier

An approximation of problem \eqref{pbexact} is defined as follows:
\begin{eqnarray}
& & \text{Find } (\bu^h,p^h,\blambda^h) \in \mathbf{V}^h \times Q^h \times \mathbf{W}^h \text{ such that } \nonumber \\
& & \left\{ \begin{array} {lll}
a(\bu^h,\bv^h)+b(\bv^h,p^h) + c(\bv^h,\blambda^h) = \mathcal{L}(\bv^h) & \quad & \forall \bv^h \in \mathbf{V}^h, \\
b(\bu^h,q^h)  = 0 & \quad & \forall q^h \in Q^h, \\
c(\bu^h,\bmu^h) = \mathcal{G}(\bmu^h) & \quad & \forall \bmu^h \in \mathbf{W}^h.
\end{array} \right. \label{pbapprox}
\end{eqnarray}

In matrix notation, the previous formulation corresponds to
\begin{eqnarray*}
\left( \begin{matrix}
A^0_{\bu\bu} & A^0_{\bu p} & A^0_{\bu\blambda} \\
A^{{0}^T}_{\bu p} & 0 & 0 \\
A^{{0}^T}_{\bu\blambda} &0& 0\\
\end{matrix} \right)
\left( \begin{matrix}
\boldsymbol{U} \\
\boldsymbol{P} \\
\boldsymbol{\Lambda} \\
\end{matrix} \right)
& = &
\left( \begin{matrix}
\boldsymbol{F^0} \\
0 \\
\boldsymbol{G^0}
\end{matrix} \right),
\end{eqnarray*}
where $\boldsymbol{U}$, $\boldsymbol{P}$ and $\boldsymbol{\Lambda}$ are the degrees of freedom of $\bu^h$, $p^h$ and $\blambda^h$ respectively. As it is done in \cite{Brezzi} or \cite{Ern} for instance, these matrices $A^0_{\bu\bu}$, $A^0_{\bu p}$,
$A^0_{\bu\blambda}$ and vectors $\boldsymbol{F^0}$, $\boldsymbol{G^0}$
are the discretization of (\ref{defa})-(\ref{defG}), respectively. Denoting $\{ \bvarphi_i\}$, $\{ \chi_i\}$ and $\{ \bpsi_i \}$ the selected basis functions of spaces $\tilde{\mathbf{V}}^h$, $\tilde{Q}^h$ and $\tilde{\mathbf{W}}^h$ respectively, we have
\begin{eqnarray*}
& \left(A^0_{\bu\bu}\right)_{ij} = 2\nu \int_{\mathcal{F}}D(\bvarphi_i):D(\bvarphi_j)\d \mathcal{F} ,
 \quad  \left(  A^0_{\bu p} \right)_{ij} = - \int_{\mathcal{F}} \chi_j \div \ \bvarphi_i \d \mathcal{F},
 \quad  \left(  A^0_{\bu\blambda} \right)_{ij} = - \int_{\Gamma} \bvarphi_i \cdot \bpsi_j \d \Gamma , &\\
& \left(\boldsymbol{F^0} \right)_i = \int_{\mathcal{F}} \bff \cdot \bvarphi_i \d \mathcal{F},
 \quad \left(\boldsymbol{G^0} \right)_i =- \int_{\Gamma} \bg \cdot \bpsi_i \d \Gamma. &
\end{eqnarray*}

\subsection{Convergence analysis} \label{seccvstab0}
Let us define
\begin{eqnarray*}
\mathbf{V}^{h}_0 & = & \left\{ \bv^h \in \mathbf{V}^h \mid c(\bv^h,\bmu^h) = 0 \ \forall \bmu^h \in \mathbf{W}^h \right\}, \\
\mathbf{V}^{h}_g & = & \left\{ \bv^h \in \mathbf{V}^h \mid c(\bv^h,\bmu^h) = c(\bv^h,g) \ \forall \bmu^h \in \mathbf{W}^h \right\}, \\
\mathbf{V}^{\# , h} & = & \left\{ \bv^h \in \mathbf{V}^h  \mid b(\bv^h,q^h) = 0 \ \forall q^h \in Q^h \right\}, \\
\mathbf{V}^{\# , h}_0 & = & \left\{ \bv^h \in \mathbf{V}^h \mid b(\bv^h,q^h) = 0 \ \forall q^h \in Q^h, c(\bv^h,\bmu^h) = 0 \ \forall \bmu^h \in \mathbf{W}^h \right\}.
\end{eqnarray*}
The spaces $\mathbf{V}^{h}_0$, $\mathbf{V}^{\# , h}$ and $\mathbf{V}^{\# , h}_0$ can be viewed as the respective discretizations of the spaces $\mathbf{V}_0$, $\mathbf{V}^{\#}$ and $\mathbf{V}^{\#}_0$.\\
Let us assume that the following inf-sup condition is satisfied, for some constant $\beta >0$ independent of $h$:
\begin{flushleft}
\hfill \\
$\mathbf{H1} \qquad \qquad \displaystyle \inf_{0 \neq q^h \in Q^h} \sup_{0 \neq \bv^h \in \mathbf{V}^h_0} \frac{b(\bv,q)}{\| \bv^h \|_{\mathbf{V^h}} \| q^h \|_{Q^h}}  \geq  \beta. $ \\ 
\hfill \\
\end{flushleft}
Note that this inf-sup condition concerns only the couple $(\bu,p)$, and it implies the following property
\begin{eqnarray}
& & \overline{q}^h \in Q^h: \ b(\bv^h, \overline{q}^h) = 0 \ \forall \bv^h \in \mathbf{V}^h_0 \Longrightarrow \overline{q}^h = 0. \label{hypHR1}
\end{eqnarray}
We shall further assume that the spaces $\tilde{\mathbf{V}}^h$, $\tilde{Q}^h$ and $\tilde{\mathbf{W}}^h$ are chosen in such a way that the following condition is satisfied, for all $h>0$
\begin{flushleft}
\hfill \\
$\mathbf{H2} \qquad \qquad \displaystyle \overline{\bmu}^h \in \mathbf{W}^h: \ c(\bv^h, \overline{\bmu}^h) = 0 \ \forall \bv^h \in \mathbf{V}^{h}\Longrightarrow \overline{\bmu}^h = 0.$ \\ 
\hfill \\
\end{flushleft}
Note that this hypothesis is not as strong as an inf-sup condition for the couple {\it velocity/multiplier}. It only demands that the space $\mathbf{V}^h$ is rich enough with respect to the space $\mathbf{W}^h$.\\


\begin{remark}
We assume only the inf-sup condition for the couple {\it velocity/pressure}, not the one for the couple {\it velocity/multiplier}. Indeed, the purpose of our work is to stabilize the multiplier associated with the Dirichlet condition on $\Gamma$, not the multiplier associated with the incompressibility condition. The stabilization of the pressure - on the domain $\mathcal{F}$ - would be another issue (see page 424 of \cite{Quarteroni} for instance).
\end{remark}

\begin{lemma} \label{lemmaVoh} \label{lemmapetree}
The bilinear form $a$ introduced in \eqref{defa} as
\begin{eqnarray*}
a : (\bu,\bv) \mapsto 2\nu\int_{\mathcal{F}} D(\bu) : D(\bv) \d \mathcal{F}
\end{eqnarray*}
is uniformly $\mathbf{V}^{h}$-elliptic, that is to say there exists $\alpha > 0$ independent of $h$ such that for all $\bv^h \in \mathbf{V}^{h}$
\begin{eqnarray*}
a(\bv^h, \bv^h) & \geq & \alpha \left\| \bv^h \right\|^2_{\mathbf{V}} .
\end{eqnarray*}
\end{lemma}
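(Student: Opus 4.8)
The plan is to establish the coercivity estimate first on the \emph{continuous} space $\mathbf{V}$ and then to transfer it to $\mathbf{V}^h$ by the conforming inclusion $\mathbf{V}^h \subset \mathbf{V}$, which holds because every $\bv^h \in \mathbf{V}^h$ is the restriction to $\mathcal{F}$ of a continuous function on $\overline{\mathcal{O}}$ vanishing on $\p\mathcal{O}$. Writing $a(\bv^h,\bv^h) = 2\nu \|D(\bv^h)\|^2_{\mathbf{L}^2(\mathcal{F})}$, the task reduces to proving that there exists $\alpha_0 > 0$, depending only on $\mathcal{F}$, such that
\begin{eqnarray*}
\|D(\bv)\|^2_{\mathbf{L}^2(\mathcal{F})} \geq \alpha_0 \|\bv\|^2_{\mathbf{H}^1(\mathcal{F})} \quad \text{for all } \bv \in \mathbf{V},
\end{eqnarray*}
where $\|\cdot\|_{\mathbf{H}^1(\mathcal{F})}$ is the $\mathbf{V}$-norm. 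Since this inequality lives on the \emph{fixed} space $\mathbf{V}$, its constant is automatically independent of $h$, and restricting it to the subspace $\mathbf{V}^h$ gives the claim with $\alpha = 2\nu\alpha_0$.

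To prove the continuous inequality I would start from Korn's second inequality on the bounded Lipschitz domain $\mathcal{F}$: there is $C_1 > 0$ with $\|\bv\|^2_{\mathbf{H}^1(\mathcal{F})} \leq C_1\left(\|\bv\|^2_{\mathbf{L}^2(\mathcal{F})} + \|D(\bv)\|^2_{\mathbf{L}^2(\mathcal{F})}\right)$ for all $\bv \in \mathbf{H}^1(\mathcal{F})$. The one remaining difficulty is to absorb the lower-order term $\|\bv\|^2_{\mathbf{L}^2(\mathcal{F})}$, which I would handle by a Peetre--Tartar (compactness) argument. Suppose no such $\alpha_0$ exists; then there is a sequence $(\bv_n) \subset \mathbf{V}$ with $\|\bv_n\|_{\mathbf{H}^1(\mathcal{F})} = 1$ and $\|D(\bv_n)\|_{\mathbf{L}^2(\mathcal{F})} \to 0$. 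By the Rellich compact embedding $\mathbf{H}^1(\mathcal{F}) \hookrightarrow \mathbf{L}^2(\mathcal{F})$ a subsequence converges in $\mathbf{L}^2(\mathcal{F})$; Korn's second inequality then forces this subsequence to be Cauchy in $\mathbf{H}^1(\mathcal{F})$, hence to converge to some $\bv \in \mathbf{V}$ (a closed subspace) with $\|\bv\|_{\mathbf{H}^1(\mathcal{F})} = 1$ and $D(\bv) = 0$.

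The concluding step identifies this limit. On the connected domain $\mathcal{F}$, the condition $D(\bv) = 0$ forces $\bv$ to be an infinitesimal rigid displacement, $\bv(\boldsymbol{x}) = \boldsymbol{a} + B\boldsymbol{x}$ with $B$ skew-symmetric. Since $\bv \in \mathbf{V}$ vanishes on the entire outer boundary $\p\mathcal{O}$ — far more than the finitely many points needed to pin a planar rigid motion to zero — we obtain $\bv \equiv 0$, contradicting $\|\bv\|_{\mathbf{H}^1(\mathcal{F})} = 1$. This yields $\alpha_0$ and finishes the proof.

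I expect the main obstacle to be conceptual rather than computational: one cannot run the compactness argument directly on the $h$-dependent spaces $\mathbf{V}^h$, since the mesh changes with $h$ and no single compactness argument would deliver a constant uniform in $h$. The decisive point is the conformity $\mathbf{V}^h \subset \mathbf{V}$, which lets us prove coercivity once on the fixed space $\mathbf{V}$ and inherit it verbatim. Along the way one should check the standing hypotheses that make each step legitimate: $\mathcal{F}$ bounded, Lipschitz and connected (for Korn, for Rellich, and for the rigid-motion characterization), and $\p\mathcal{O}$ large enough to annihilate rigid motions.
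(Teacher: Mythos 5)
Your proposal is correct and takes essentially the same route as the paper: reduce to coercivity on the fixed space $\mathbf{V}$ via the conforming inclusion $\mathbf{V}^h \subset \mathbf{V}$, then argue by contradiction with a normalized sequence, Rellich compactness, Korn's second inequality to get a Cauchy sequence in $\mathbf{H}^1(\mathcal{F})$, and the rigid-displacement characterization of $D(\bv)=0$ killed by the vanishing trace on $\p\mathcal{O}$. If anything your write-up is slightly cleaner, since the paper's proof invokes $\div\ \bv_m = 0$ before applying Korn, which is both unnecessary for Korn's second inequality and unavailable (elements of $\mathbf{V}$, and of $\mathbf{V}^h$, need not be divergence-free), whereas you correctly dispense with it.
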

\begin{proof}
Notice that $\mathbf{V}^{h} \subset \mathbf{V}$. Then it is sufficient to prove that the bilinear form $a$ is coercive on the space $\mathbf{V}$, that is to say there exists $\alpha > 0$ such that for all $\bv \in \mathbf{V}$
\begin{eqnarray*}
a(\bv,\bv) & \geq & \alpha \|\bv \|^2_{\mathbf{V}} .
\end{eqnarray*}
By absurd, suppose that for all $n \in \mathbb{N}$ there exists $(\bv_n)_n$ such that
\begin{eqnarray*}
n\|D(\bv_n)\|_{[\L^2(\mathcal{F})]^4} & < & \|\bv_n\|_{\mathbf{V}}.
\end{eqnarray*}
Without loss of generality, we can assume that $\|\bv_n\|_{\mathbf{V}} =1$. In particular, $D(\bv_n)$ converges to $0$ in $[\L^2(\mathcal{F})]^4$. Then, from the Rellich's theorem, we can extract a subsequence $\bv_m$ which converges in $\mathbf{L}^2(\mathcal{F})$. Using the fact that $\div \ \bv_m = 0$, the Korn inequality (see \cite{Ern} for instance) enables us to write
\begin{eqnarray*}
\|\bv_m - \bv_p\|^2_{\mathbf{H}^1(\mathcal{F})} & \leq & C\left(\|\bv_m - \bv_p\|^2_{\mathbf{L}^2(\mathcal{F})} + \left\| D(\bv_m) - D(\bv_p) \right\|^2_{[\L^2(\mathcal{F})]^4} \right),
\end{eqnarray*}
where $C$ denotes a positive constant\footnote{In the following, the symbol $C$ will denote a generic positive constant which does not depend on the mesh size $h$. It can depend, however, on the geometry of $\cal F$ and $\Gamma$, on the physical parameters, on the mesh regularity and on other quantities clear from the context. It can take different values at different places.}. This implies that $(\bv_m)_m$ is a Cauchy sequence in $\mathbf{H}^1(\mathcal{F})$. Thus it converges to some $\bv_{\infty}$ which satisfies $\|D(\bv_{\infty})\|_{\mathbf{L}^2(\mathcal{F})}=0$. The trace theorem implies that we have also $\bv_{\infty} = 0$ on $\p \mathcal{O}$. Let us notice that $\bv \mapsto \|D(\bv)\|_{[\L^2(\mathcal{F})]^4}$ is a norm on $\mathbf{V}$. Indeed, if $\|D(\bv_{\infty})\|_{[\L^2(\mathcal{F})]^4} = 0$, then $\bv_{\infty}$ is reduced to a rigid displacement, that is to say $\bv_{\infty} = l + \omega \wedge x$ in $\mathcal{F}$. Then, the condition $\bv_{\infty}=0$ on $\p \mathcal{O}$ leads us to $\bv_{\infty}=0$. It belies the fact that $\|\bv_m\|_{\mathbf{V}} =1$.
\end{proof}

\begin{proposition}
Assume that the properties $\mathbf{H1}$ and $\mathbf{H2}$ are satisfied. Then there exists a unique solution $(\bu^h, p^h, \blambda^h)$ to Problem \eqref{pbapprox}.
\end{proposition}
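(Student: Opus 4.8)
The plan is to reduce the claim to a uniqueness statement. Problem \eqref{pbapprox} is a square linear system in the unknowns $(\boldsymbol{U},\boldsymbol{P},\boldsymbol{\Lambda})$, so in this finite-dimensional setting existence and uniqueness are equivalent; it therefore suffices to show that the associated homogeneous problem (that is, \eqref{pbapprox} with $\mathcal{L}\equiv 0$ and $\mathcal{G}\equiv 0$) admits only the trivial solution $(\bu^h,p^h,\blambda^h)=(0,0,0)$. First I would run the standard energy argument: taking $\bv^h=\bu^h$ in the first equation and using the second equation with $q^h=p^h$ together with the third with $\bmu^h=\blambda^h$ (recall $\mathcal{G}\equiv 0$) kills the two coupling terms, since $b(\bu^h,p^h)=0$ and $c(\bu^h,\blambda^h)=0$, leaving $a(\bu^h,\bu^h)=0$.

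By the uniform $\mathbf{V}^{h}$-ellipticity established in Lemma \ref{lemmaVoh}, one has $a(\bu^h,\bu^h)\geq \alpha\|\bu^h\|_{\mathbf{V}}^2$, so $a(\bu^h,\bu^h)=0$ forces $\bu^h=0$. The first equation then collapses to $b(\bv^h,p^h)+c(\bv^h,\blambda^h)=0$ for all $\bv^h\in\mathbf{V}^h$. Next I would peel off the two remaining multipliers one at a time. Restricting the test functions to the subspace $\mathbf{V}^{h}_0$ (on which $c(\bv^h,\cdot)$ vanishes identically, since $\blambda^h\in\mathbf{W}^h$) removes the $\blambda^h$ term and leaves $b(\bv^h,p^h)=0$ for all $\bv^h\in\mathbf{V}^{h}_0$; by the consequence \eqref{hypHR1} of the inf-sup condition $\mathbf{H1}$ this yields $p^h=0$.

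Finally, with $\bu^h=0$ and $p^h=0$ the first equation reduces to $c(\bv^h,\blambda^h)=0$ for all $\bv^h\in\mathbf{V}^{h}$, and hypothesis $\mathbf{H2}$ gives $\blambda^h=0$. This closes the uniqueness argument and hence proves the proposition. I do not expect any genuinely hard step here, as the result is a routine consequence of the coercivity of $a$ and the two structural hypotheses; the only point requiring care is the ordering of the elimination — one must first invoke $\mathbf{H1}$ on the constrained space $\mathbf{V}^{h}_0$ to remove the pressure, and only afterwards invoke $\mathbf{H2}$ on all of $\mathbf{V}^{h}$ to remove $\blambda^h$, since the two conditions are tested against different spaces and cannot be applied simultaneously.
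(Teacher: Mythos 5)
Your proof is correct and takes essentially the same route as the paper's: reduce existence to uniqueness of the finite-dimensional square system, use the coercivity of $a$ from Lemma \ref{lemmaVoh} to get $\bu^h=0$, then eliminate $p^h$ by testing on $\mathbf{V}^h_0$ and invoking \eqref{hypHR1} (hypothesis $\mathbf{H1}$), and finally eliminate $\blambda^h$ via $\mathbf{H2}$. The only cosmetic difference is that the paper phrases the first step as $\bu^h\in\mathbf{V}^{\#,h}_0$ (using all test functions in the last two equations) rather than substituting $q^h=p^h$ and $\bmu^h=\blambda^h$ explicitly, which carries the same content.
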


\begin{proof}
Since Problem \eqref{pbapprox} is of finite dimension, existence of the solution will follow from its uniqueness. To prove uniqueness, it is sufficient to consider the case $\bff=0$ and $\bg=0$, and to prove that it leads to $(\bu^h, p^h,\blambda^h) = (0,0,0)$. The last two equations in \eqref{pbapprox} show then immediately that $\bu^h\in{\bf V}^{\#,h}_0$, so that taking $\bv^h=\bu^h$ in the first equation leads to $\bu^h = 0$ by Lemma \ref{lemmaVoh}. Taking any test function from $\mathbf{V}^{h}_0$ in the first equation of \eqref{pbapprox} shows now that $p^h = 0$, by condition \eqref{hypHR1} (hypothesis {\bf H1}). And finally the same equation yields $\blambda^h = 0$ by Hypothesis $\mathbf{H2}$.
\end{proof}

We recall the following basic result from the theory of saddle point problems \cite{Ern, Girault}.
\begin{lemma} \label{lemmaGR}
Let $X$ and $M$ be Hilbert spaces and $A(\cdot, \cdot):X \times X \to\mathbb{R}$ and $B(\cdot, \cdot):X \times M \to\mathbb{R}$ be bounded bilinear forms such that $A$ is coercive
$$
A(u,u) \ge \alpha \| u \|_X^2, \quad \forall  u \in X
$$
and $B$ has the following inf-sup property
\begin{eqnarray*}
\inf_{0 \neq q \in M} \sup_{0 \neq u \in X} \frac{B(u,q)}{\| u \|_X \| q \|_M} & \geq & \beta,
\end{eqnarray*}
with some $\alpha, \beta>0$. Then, for all $\phi \in X'$ and $\psi \in M'$, the problem:
\begin{eqnarray*}
& & \text{Find $u \in X$ and $p\in M$ such that} \\
& & \left\{ \begin{array} {lll}
A(u,v)+ B(v,p)  =  \langle {\phi} , v \rangle, & \quad & \forall v \in X
\\
B(u,q)    =  \langle {\psi} , q \rangle, &\quad & \forall q \in M
\end{array} \right.
\end{eqnarray*}
has a unique solution which satisfies
$$
\| u \|_X + \| q \|_M \le C (\| \phi \|_{X'} + \| \psi \|_{M'})
$$
with a constant $C>0$ that depends only on $\alpha, \beta$ and on the norms of $A$ and $B$.
\end{lemma}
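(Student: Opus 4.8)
The plan is to recast the problem in operator form and then apply the classical closed-range (Banach--Ne\v{c}as--Babu\v{s}ka) machinery together with the Lax--Milgram lemma on a subspace. First I would introduce the bounded operator $\mathcal{B}:X\to M'$ associated with $B$ through $\langle \mathcal{B}v,q\rangle = B(v,q)$, its transpose $\mathcal{B}':M\to X'$, and the kernel $V=\ker\mathcal{B}=\{v\in X\mid B(v,q)=0\ \forall q\in M\}$, which is a closed subspace of $X$. The key preliminary observation is that the inf-sup hypothesis on $B$ is equivalent, via the closed range theorem, to $\mathcal{B}$ being surjective with a bounded right inverse: for every $\xi\in M'$ there is $u_\xi$ in the orthogonal complement $V^\perp$ with $B(u_\xi,q)=\langle\xi,q\rangle$ for all $q\in M$ and $\|u_\xi\|_X\le \beta^{-1}\|\xi\|_{M'}$. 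Dually, $\mathcal{B}'$ is injective with closed range equal to the annihilator $V^\circ$ of $V$ in $X'$, and satisfies $\|\mathcal{B}'q\|_{X'}\ge\beta\|q\|_M$.

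Next I would split the first unknown. Applying the right inverse to $\xi=\psi$ produces $u_\psi\in V^\perp$ solving the second equation $B(u_\psi,q)=\langle\psi,q\rangle$ with $\|u_\psi\|_X\le\beta^{-1}\|\psi\|_{M'}$, and I would seek the solution in the form $u=u_0+u_\psi$ with $u_0\in V$ still to be found; since any admissible $u$ must satisfy $B(u,q)=\langle\psi,q\rangle$, this ansatz is forced and reduces the problem to the kernel. Because $A$ is coercive on all of $X$ it is a fortiori coercive on the closed subspace $V$, so the Lax--Milgram lemma yields a unique $u_0\in V$ with $A(u_0,v)=\langle\phi,v\rangle-A(u_\psi,v)$ for all $v\in V$, together with the bound $\|u_0\|_X\le\alpha^{-1}(\|\phi\|_{X'}+\|A\|\,\|u_\psi\|_X)$. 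This fixes $u=u_0+u_\psi$, and by construction the first equation already holds when tested against $v\in V$.

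It remains to recover the multiplier $p$ and to verify the first equation against all of $X$. I would consider the residual functional $\ell:v\mapsto\langle\phi,v\rangle-A(u,v)$ in $X'$; by the defining property of $u_0$ it vanishes on $V$, hence $\ell\in V^\circ$. But $V^\circ$ is precisely the closed range of $\mathcal{B}'$, so there exists $p\in M$ with $\mathcal{B}'p=\ell$, that is $B(v,p)=\langle\phi,v\rangle-A(u,v)$ for all $v\in X$, which is the missing equation. Uniqueness of $p$ follows from injectivity of $\mathcal{B}'$, equivalently from the inf-sup condition, which also delivers $\|p\|_M\le\beta^{-1}\|\ell\|_{X'}$. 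Collecting the three estimates and re-expressing everything in $\|\phi\|_{X'}$ and $\|\psi\|_{M'}$ gives the asserted stability bound with a constant depending only on $\alpha$, $\beta$ and the norms of $A$ and $B$; uniqueness of the whole triple is then immediate, since $\phi=\psi=0$ forces $u_\psi=0$, then $u_0=0$ by coercivity, and finally $p=0$ by injectivity of $\mathcal{B}'$.

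The main obstacle, and the only nonelementary ingredient, is the equivalence in the first paragraph between the inf-sup condition and the existence of a bounded right inverse of $\mathcal{B}$, with the dual statement $\mathrm{Range}(\mathcal{B}')=V^\circ$: this rests on the closed range and open mapping theorems, and some care is needed to ensure the right inverse lands in $V^\perp$ so that the constants stay controlled by $\beta$ alone. Everything downstream---the Lax--Milgram step and the bookkeeping of constants---is routine.
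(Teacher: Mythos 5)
Your proof is correct, but note that the paper itself does not prove this lemma at all: it is merely ``recalled'' with a citation to \cite{Ern, Girault}, so there is no in-paper argument to compare against. What you have written is precisely the classical Brezzi-type proof found in those references --- reduction to the kernel $V=\ker\mathcal{B}$ via a bounded right inverse of $\mathcal{B}$ landing in $V^{\perp}$ (closed range theorem), Lax--Milgram on $V$, and recovery of the multiplier from the identity $\mathrm{Range}(\mathcal{B}')=V^{\circ}$ --- with the constants correctly tracked in $\alpha$, $\beta$, $\|A\|$, $\|B\|$, and with the only delicate point (that the inf-sup condition yields the isomorphism $\mathcal{B}|_{V^{\perp}}:V^{\perp}\to M'$ with inverse bounded by $\beta^{-1}$) explicitly flagged and handled. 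One cosmetic remark: the estimate in the paper's statement reads $\|u\|_X+\|q\|_M$ where $q$ should be $p$; your bound is stated for the correct quantity.
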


We can now prove the abstract error estimate for velocity and pressure.

\begin{proposition}
Assume Hypothesis $\mathbf{H1}$. Let $(\bu,p,\blambda)$ and $(\bu^h,p^h,\blambda^h)$ be solutions to Problems \eqref{pbexact} and \eqref{pbapprox} respectively. There exists a constant $C>0$ independent of $h$ such that
\begin{eqnarray}
\|\bu-\bu^h \|_{\mathbf{V}} + \| p -p^h \|_{\L^2(\mathcal{F})} & \leq & C\left(\inf_{\bv^h\in \mathbf{V}^h_g} \|\bu-\bv^h \|_{\mathbf{V}} 
\right.\nonumber \\
& & \left.+ \inf_{q^h \in Q^h} \| p -q^h \|_{\L^2(\mathcal{F})} + \inf_{\bmu^h \in W^h} \| \blambda -\bmu^h \|_{\mathbf{H}^{-1/2}(\Gamma)} \right).\nonumber \\
\label{estborne}
\end{eqnarray}
\end{proposition}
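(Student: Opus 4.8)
The plan is to exploit the conformity of the discretization ($\mathbf{V}^h\subset\mathbf{V}$, $Q^h\subset\L^2(\mathcal{F})$ and $\mathbf{W}^h\subset\mathbf{H}^{-1/2}(\Gamma)$) together with the abstract saddle-point stability of Lemma~\ref{lemmaGR}, applied not on the whole velocity space but on the discrete multiplier-kernel $\mathbf{V}^h_0$, on which the troublesome term involving $\blambda$ disappears. Concretely, I would fix arbitrary $\bw^h\in\mathbf{V}^h_g$, $r^h\in Q^h$ and $\bmu^h\in\mathbf{W}^h$, and introduce the discrete errors $\bz^h=\bu^h-\bw^h$ and $s^h=p^h-r^h$. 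Since $\bu^h$ (by the third equation of \eqref{pbapprox}) and $\bw^h$ (by definition of $\mathbf{V}^h_g$) satisfy the same discrete Dirichlet constraint, their difference lies in $\mathbf{V}^h_0$; this is the key structural point, because it makes $\bz^h$ orthogonal, in the sense of the form $c$, to the whole discrete multiplier space $\mathbf{W}^h$.

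Next I would write the saddle-point problem solved by the pair $(\bz^h,s^h)$ on $\mathbf{V}^h_0\times Q^h$. Testing the first discrete equation of \eqref{pbapprox} with $\bv^h\in\mathbf{V}^h_0$ annihilates the term $c(\bv^h,\blambda^h)$, and substituting $\mathcal{L}(\bv^h)$ from the first continuous equation of \eqref{pbexact} (legitimate since $\bv^h\in\mathbf{V}^h\subset\mathbf{V}$) produces the residual $a(\bu-\bw^h,\bv^h)+b(\bv^h,p-r^h)+c(\bv^h,\blambda)$. Because $\bv^h\in\mathbf{V}^h_0$, one has $c(\bv^h,\blambda)=c(\bv^h,\blambda-\bmu^h)$, so the multiplier enters \emph{only} through its best-approximation error. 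For the divergence equation, $b(\bz^h,q^h)=-b(\bw^h,q^h)=b(\bu-\bw^h,q^h)$, since $b(\bu^h,q^h)=0$ and $\div\,\bu=0$ yields $b(\bu,q^h)=0$ for all $q^h$. Thus $(\bz^h,s^h)$ solves
\begin{eqnarray*}
a(\bz^h,\bv^h)+b(\bv^h,s^h) & = & \langle\phi^h,\bv^h\rangle \quad \forall\bv^h\in\mathbf{V}^h_0, \\
b(\bz^h,q^h) & = & \langle\psi^h,q^h\rangle \quad \forall q^h\in Q^h,
\end{eqnarray*}
with $\langle\phi^h,\bv^h\rangle=a(\bu-\bw^h,\bv^h)+b(\bv^h,p-r^h)+c(\bv^h,\blambda-\bmu^h)$ and $\langle\psi^h,q^h\rangle=b(\bu-\bw^h,q^h)$.

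To apply Lemma~\ref{lemmaGR} with $X=\mathbf{V}^h_0$ and $M=Q^h$, I would check its hypotheses with $h$-independent constants: $a$ is coercive on $\mathbf{V}^h_0\subset\mathbf{V}^h$ by Lemma~\ref{lemmaVoh}, and $b$ satisfies precisely the required inf-sup on $\mathbf{V}^h_0\times Q^h$ by Hypothesis~$\mathbf{H1}$. Boundedness of $a$, $b$ and $c$ (the last one via the trace inequality $\|\bv^h\|_{\mathbf{H}^{1/2}(\Gamma)}\leq C\|\bv^h\|_{\mathbf{V}}$) gives $\|\phi^h\|_{(\mathbf{V}^h_0)'}+\|\psi^h\|_{(Q^h)'}\leq C(\|\bu-\bw^h\|_{\mathbf{V}}+\|p-r^h\|_{\L^2(\mathcal{F})}+\|\blambda-\bmu^h\|_{\mathbf{H}^{-1/2}(\Gamma)})$. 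Lemma~\ref{lemmaGR} then bounds $\|\bz^h\|_{\mathbf{V}}+\|s^h\|_{\L^2(\mathcal{F})}$ by the same quantity, and the triangle inequalities $\bu-\bu^h=(\bu-\bw^h)-\bz^h$, $p-p^h=(p-r^h)-s^h$, followed by taking the infimum over $\bw^h\in\mathbf{V}^h_g$, $r^h\in Q^h$ and $\bmu^h\in\mathbf{W}^h$, deliver \eqref{estborne}.

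The main obstacle — and the reason the estimate holds without any inf-sup condition for the couple velocity/multiplier — is the reduction to $\mathbf{V}^h_0$: it is exactly the membership $\bz^h\in\mathbf{V}^h_0$ that simultaneously cancels $c(\bz^h,\blambda^h)$ and converts $c(\bz^h,\blambda)$ into the harmless term $c(\bz^h,\blambda-\bmu^h)$, thereby decoupling the velocity/pressure estimate from the uncontrolled multiplier error $\blambda-\blambda^h$. One must be careful to keep the comparison function $\bw^h$ in the affine set $\mathbf{V}^h_g$ (only the Dirichlet constraint, matching the infimum in the statement) and to let the divergence constraint be handled entirely by the abstract lemma through $\psi^h$, rather than demanding $\bw^h$ be discretely divergence-free; this is what avoids a circular coupling between the velocity and pressure estimates. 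Everything else is a routine verification that the constants furnished by Lemmas~\ref{lemmaVoh} and~\ref{lemmaGR} and by the trace theorem are independent of $h$.
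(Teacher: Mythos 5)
Your proof is correct and follows essentially the same route as the paper: decompose via arbitrary $\bv^h\in\mathbf{V}^h_g$, $q^h\in Q^h$, $\bmu^h\in\mathbf{W}^h$, observe that $\bu^h-\bv^h\in\mathbf{V}^h_0$ so that testing on $\mathbf{V}^h_0$ kills $c(\cdot,\blambda^h)$ and reduces the multiplier to $c(\cdot,\blambda-\bmu^h)$, then apply Lemma~\ref{lemmaGR} with $X=\mathbf{V}^h_0$, $M=Q^h$ (coercivity from Lemma~\ref{lemmaVoh}, inf-sup from $\mathbf{H1}$) and conclude by the triangle inequality. The only cosmetic difference is that the paper introduces an auxiliary saddle-point problem and identifies its unique solution with $(\bu^h-\bv^h,p^h-q^h)$, whereas you apply the stability estimate directly to the error pair, which is logically the same step.
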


\begin{proof}
Take any $\bv^h \in \mathbf{V}^h_g$, $q^h \in Q^h$ and $\bmu^h \in \mathbf{W}^h$. Comparing the first lines in systems \eqref{pbexact} and \eqref{pbapprox}, we can write
\begin{equation}\label{1st}
a(\bu^h-\bv^h,\bw^h) + b(\bw^h,p^h-q^h) = a(\bu-\bv^h,\bw^h) + b(\bw^h,p-q^h)+ c(\blambda - \bmu^h,\bw^h) \quad \forall \bw^h \in \mathbf{V}_0^h.
\end{equation}
We have used here the fact that $c(\blambda^h,\bw^h) = c(\bmu^h,\bw^h)=0$ for all $\bw^h \in \mathbf{V}_0^h$. Similarly, the second lines in systems \eqref{pbexact} and \eqref{pbapprox} imply
\begin{equation}\label{2nd}
b(\bu^h-\bv^h,s^h) = b(\bu-\bv^h,s^h) \quad \forall s^h \in {Q}^h.
\end{equation}
Now consider the problem:
\begin{eqnarray*}
& & \text{Find ${\bf x}^h\in{\bf V}^h_0$ and $t^h\in Q^h$ such that} \\
& &  \left\{ \begin{array} {lll}
a({\bf x}^h,\bw^h) + b(\bw^h,t^h) = a(\bu-\bv^h,\bw^h) + b(\bw^h,p-q^h)+ c(\blambda - \bmu^h,\bw^h) & & \forall \bw^h \in \mathbf{V}_0^h,
\\
b({\bf x}^h,s^h) = b(\bu-\bv^h,s^h) & & \forall s^h \in {Q}^h.
\end{array} \right.
\end{eqnarray*}
Using Lemma \ref{lemmaGR} with $A=a$, $B=b$, $X = \mathbf{V}_0^h$ and $M = Q^h$, the solution $({\bf x}^h, t^h)$ exists and is unique. Moreover, it satisfies
\begin{eqnarray*}
\|{\bf x}^h\|_{\mathbf{V}} +\|t^h \|_{\L^2(\mathcal{F})} & \leq &
 C \left( \|\bu-\bv^h\|_{\mathbf{V}} + \|p-q^h\|_{\L^2(\mathcal{F})} + \|\blambda-\bmu^h\|_{{\mathbf{H}^{-1/2}}(\Gamma)} \right).
\end{eqnarray*}
Comparing the system of equations for $({\bf x}^h, t^h)$ with \eqref{1st}--\eqref{2nd} and noting that $\bu-\bv^h \in{\bf V}^h_0$, we can identify
$$
{\bf x}^h=\bu^h-\bv^h, \quad t^h=p^h-q^h.
$$
In combination with the triangle inequality, this gives
\begin{eqnarray*}
\|\bu-\bu^h\|_{\mathbf{V}} +\|p-p^h \|_{\L^2(\mathcal{F})} & \leq &
 C \left( \|\bu-\bv^h\|_{\mathbf{V}} + \|p-q^h\|_{\L^2(\mathcal{F})} + \|\blambda-\bmu^h\|_{{\mathbf{H}^{-1/2}}(\Gamma)} \right).
\end{eqnarray*}
Since $\bv^h \in \mathbf{V}^h_g$, $q^h \in Q^h$ and $\bmu^h \in \mathbf{W}^h$ are arbitrary, this is equivalent to the desired result.

\end{proof}

In summary, the results of this section tell us that, under Hypotheses $\mathbf{H1}$ and $\mathbf{H2}$, Problem \eqref{pbapprox} has a unique solution which satisfies the a priori estimate \eqref{estborne}. However, we have no estimate for the multiplier $\blambda^h$.

\subsection{The theoretical order of convergence}
The estimation of the convergence rate proposed for the Poisson problem in \cite{HaslR} can be straightforwardly transposed to the Stokes problem. Proposition 3 of \cite{HaslR} ensures an order of convergence at least equal to $\sqrt{h}$. It can be adapted to our case as follows.\\

\begin{proposition}
Assume Hypotheses $\mathbf{H1}$, $\mathbf{H2}$. Let $(\bu,p,\blambda)$ be the solution of Problem \eqref{pbexact} for $\bg = 0$, such that $\bu \in \mathbf{H}^{2+\varepsilon}(\mathcal{F})\cap \mathbf{H}^1_0(\mathcal{F})$ for some $\varepsilon > 0$. Assume that
\begin{eqnarray*}
\inf_{q^h \in Q^h} \| p -q^h \|_{Q} & \leq & Ch^{\delta}, \\
\inf_{\bmu^h \in W^h} \| \blambda -\bmu^h \|_{\mathbf{W}} & \leq & Ch^{\delta},
\end{eqnarray*}
for some $\delta \geq 1/2$. Then
\begin{eqnarray*}
\left\|\bu-\bu^h \right\|_{\mathbf{V}} + \| p -p^h \|_{\L^2(\mathcal{F})} & \leq & C\sqrt{h}.
\end{eqnarray*}
\end{proposition}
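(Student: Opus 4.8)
The plan is to feed this into the abstract a priori estimate \eqref{estborne} of the previous proposition and to show that, under the stated hypotheses, each of its three terms is $O(\sqrt h)$. Since $\bg=0$, the exact velocity satisfies $\bu\in\mathbf H^1_0(\mathcal F)$, so $\bu=0$ on $\Gamma$, and the constrained discrete space reduces to $\mathbf V^h_g=\mathbf V^h_0$. The pressure and multiplier terms $\inf_{q^h\in Q^h}\|p-q^h\|_{\L^2(\mathcal F)}$ and $\inf_{\bmu^h\in\mathbf W^h}\|\blambda-\bmu^h\|_{\mathbf H^{-1/2}(\Gamma)}$ are bounded by $Ch^\delta\le C\sqrt h$ directly by hypothesis (using $\delta\ge 1/2$ and $h\le 1$). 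Hence the entire difficulty is to bound the constrained best-approximation error $\inf_{\bv^h\in\mathbf V^h_0}\|\bu-\bv^h\|_{\mathbf V}$ by $C\sqrt h$.

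First I would build an unconstrained, optimal approximation of $\bu$ in $\mathbf V^h$. Because $\bu\in\mathbf H^{2+\varepsilon}(\mathcal F)$ with $\varepsilon>0$, the Sobolev embedding $\mathbf H^{2+\varepsilon}(\mathcal F)\hookrightarrow\mathcal C^1(\overline{\mathcal F})$ holds in dimension two, so $\bu$ admits an extension $E\bu$ of class $\mathbf H^{2+\varepsilon}$ on a neighbourhood of $\overline{\mathcal F}$ covering every element cut by $\Gamma$, and Lagrange interpolation of $E\bu$ is well defined. Let $\bw^h\in\mathbf V^h$ be the restriction to $\mathcal F$ of the interpolant of $E\bu$ on the fictitious mesh $\mathcal T^h$. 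Interpolating the smooth extension on each whole element and then restricting to $\mathcal F$, the standard local estimate gives the optimal bound $\|\bu-\bw^h\|_{\mathbf V}\le Ch\,\|\bu\|_{\mathbf H^{2+\varepsilon}(\mathcal F)}$. This $\bw^h$ does not belong to $\mathbf V^h_0$, however, since its trace on $\Gamma$ need not be $\L^2(\Gamma)$-orthogonal to $\mathbf W^h$.

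The second, and crucial, step is to correct $\bw^h$ into $\mathbf V^h_0$ at a cost of only $\sqrt h$. I would define $\bz^h\in\mathbf V^h$ to be the finite element function that coincides with $\bw^h$ at every vertex of every element cut by $\Gamma$ and vanishes at all other nodes (so that $\bz^h$ also vanishes near $\p\mathcal O$). Then $\bw^h-\bz^h$ is identically zero on each cut element, hence on $\Gamma$, so $c(\bw^h-\bz^h,\bmu^h)=0$ for all $\bmu^h\in\mathbf W^h$, i.e. $\bv^h:=\bw^h-\bz^h\in\mathbf V^h_0=\mathbf V^h_g$. To estimate $\bz^h$ I would use that each such vertex lies within distance $O(h)$ of $\Gamma$ and that its nodal value equals $E\bu$ there; since $E\bu=\bu=0$ on $\Gamma$ and $E\bu$ is Lipschitz, one gets $|\bw^h(\text{vertex})|\le C\,\dist(\text{vertex},\Gamma)\le Ch$. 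Consequently $\bz^h$ is an $O(h)$ nodal function supported on the strip of cut elements and their neighbours, a band of width $O(h)$; a scaling/inverse estimate on this band then yields $\|\bz^h\|_{\mathbf V}\le C\sqrt h$ (the gradient is $O(1)$ pointwise while the band has area $O(h)$). Combining, $\|\bu-\bv^h\|_{\mathbf V}\le\|\bu-\bw^h\|_{\mathbf V}+\|\bz^h\|_{\mathbf V}\le Ch+C\sqrt h\le C\sqrt h$, which together with the first paragraph proves the claim.

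The main obstacle is precisely this correction step: since we only assume Hypothesis $\mathbf H2$ and not a uniform velocity/multiplier inf-sup condition, the constraint defining $\mathbf V^h_0$ cannot be enforced by a Fortin-type projection with an $h$-independent bound, and the explicit boundary-layer construction above is what replaces it; it is here that the loss of half a power of $h$ is intrinsic and unavoidable. The remaining points are routine but must be checked with care: the validity of the local interpolation estimate on the cut elements (recovered by interpolating the smooth extension on the whole element and restricting), the existence of the extension $E\bu$ near $\Gamma$, and the fact that the homogeneous datum $\bg=0$, through $\bu=0$ on $\Gamma$, is exactly what forces the cut-node values to be $O(h)$ and thus drives the whole argument.
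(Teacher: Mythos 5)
Your proof is correct and follows essentially the same route as the paper: plug the abstract estimate \eqref{estborne} into the hypotheses on $p$ and $\blambda$, note $\mathbf{V}^h_g=\mathbf{V}^h_0$ since $\bg=0$, and construct an interpolant in $\mathbf{V}^h_0$ vanishing on all cut triangles, where the vanishing trace $\bu|_\Gamma=0$ together with the Lipschitz bound from $\mathbf{H}^{2+\varepsilon}(\mathcal{F})\hookrightarrow\mathcal{C}^1(\overline{\mathcal{F}})$ forces values of order $h$ in the $O(h)$-band and hence an $H^1$-loss of exactly $\sqrt{h}$. The only difference is cosmetic: the paper (citing \cite{HaslR}, Section 3) interpolates $(1-\eta_h)\bu$ with a cut-off $\eta_h$ equal to $1$ in a band of width $\frac{3h}{2}$, whereas you zero out the nodal values of the interpolant on the cut elements after the fact and estimate the correction $\bz^h$ directly --- the same mechanism, with your version spelling out the details the paper delegates to the citation.
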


\begin{proof}
As is shown in \cite{HaslR}, Section 3, for any $\bu \in \mathbf{H}^{2+\varepsilon}(\mathcal{F}) \cap \mathbf{H}^1_0(\mathcal{F})$ there exists a finite element interpolating function $\bv^h \in {\bf V}^h_0$ such that
\begin{equation}\label{sqrth}
\| \bu - \bv^h \|_{\mathbf{V}}  \leq  C \sqrt{h}.
\end{equation}
In fact, $\bv^h$ is constructed as a standard interpolating vector of $(1-\eta_h)\bu$ where $\eta_h$ is a cut-off function equal to $1$ in a vicinity of the boundary $\Gamma$, more precisely in a band of width $\frac{3h}{2} $, so that $\bv^h$ vanishes on all the triangles cut by $\Gamma$. This ensures that $\bv^h$ vanishes on $\Gamma$ so that $\bv^h \in {\bf V}^h_0$. Now, the estimate of the present proposition follows from  \eqref{estborne} combined with \eqref{sqrth} (note that $\mathbf{V}^{h}_g=\mathbf{V}^{h}_0$ under our assumptions) and the hypotheses on the interpolating functions $q^h$ and $\bmu^h$.
\end{proof}

Let us quote other references that treat of this kind of phenomena, as \cite{Girault, Ramiere, Ramiere2, Maury}. We note, however, that the estimate of the order of convergence in $\sqrt{h}$ seems too pessimistic in view of the numerical tests presented in \cite{HaslR} for the Poisson problem (with the possible exception of the lowest order finite elements). In our numerical experiments for the Stokes problem, we do not observe the order of convergence as slow as $\sqrt{h}$.


\section{The fictitious domain method with stabilization} \label{secstab}
\subsection{Presentation of the method}
The main purpose of the stabilization method we introduce consists in recovering the convergence on the multiplier $\blambda$. For that, the idea is to insert in our formulation a term which takes into account this requirement. Following the idea used in \cite{Barbosa1, Barbosa2}, we extend the classical Lagrangian $L_0$ given in \eqref{Lago}, as
\begin{eqnarray*}
L(\bu,p,\blambda) & = & \nu \int_{\mathcal{F}} \left|D(\bu) \right|^2\d \mathcal{F} - \int_{\mathcal{F}}p\div \ \bu\d \mathcal{F} - \int_{\mathcal{F}}\bff\cdot \bu\d \mathcal{F} - \int_{\Gamma} \blambda \cdot (\bu-\bg) \d \Gamma \\
& & - \frac{\gamma}{2}\int_{\Gamma}\left|\blambda - \sigma(\bu,p)\bn \right|^2\d \Gamma.
\end{eqnarray*}
Note that this extended Lagrangian coincides with the previous one on an exact solution. The quadratic term so added enables us to take into account an additional cost. Minimizing $L$ leads to forcing $\blambda$ to reach the desired value corresponding to $\sigma(\bu,p)\bn$. The constant $\gamma >0$ represents the importance we give to this demand. However, notice that this additional term affects the positivity of $L$. This is the reason why we cannot choose $\gamma$ too large, and so this approach is not a penalization method. We discuss on this choice of $\gamma$ in section \ref{secgamma}.\\
The computations of the first variations leads us to
\begin{eqnarray*}
\frac{\delta L}{\delta \bu}(\bv) & = & 2\nu\int_{\mathcal{F}}D(\bu):D(\bv)\d \mathcal{F} - \int_{\mathcal{F}}p\div \ \bv\d \mathcal{F} - \int_{\mathcal{F}}\bff\cdot \bv\d \mathcal{F} - \int_{\Gamma} \blambda\cdot \bv\d \Gamma \\
  & & + 2\nu\gamma \int_{\Gamma}(\blambda-\sigma(\bu,p)\bn)\cdot \left( D(\bv)\bn\right)\d \Gamma , \\
\frac{\delta L}{\delta p}(q) & = & - \int_{\mathcal{F}}q\div\ \bu\d \mathcal{F} -\gamma \int_{\Gamma}q\left(\blambda - \sigma(\bu,p)\bn\right)\cdot \bn \d \Gamma, \\
\frac{\delta L}{\delta \blambda}(\bmu) & = & - \int_{\Gamma}\bmu\cdot (\bu-\bg)\d \Gamma - \gamma \int_{\Gamma}\left(\blambda-\sigma(\bu,p)\bn\right)\cdot \bmu \d \Gamma.
\end{eqnarray*}
Thus the stabilized formulation is:
\begin{eqnarray}
& & \text{Find } (\bu,p,\blambda) \in \mathbf{V} \times Q \times \mathbf{W} \text{ such that } \nonumber \\
& & \left\{ \begin{array} {lll}
\mathcal{A}((\bu,p,\blambda);\bv)  = \mathcal{L}(\bv) & \quad & \forall \bv \in \mathbf{V}, \\
\mathcal{B}((\bu,p,\blambda);q)  = 0 & \quad & \forall q \in Q, \\
\mathcal{C}((\bu,p,\blambda);\bmu) = \mathcal{G}(\bmu), & \quad & \forall \bmu \in \mathbf{W},
\end{array} \right. \label{FVaugmented}
\end{eqnarray}
where
\begin{eqnarray*}
\mathcal{A}((\bu,p,\blambda);\bv) & = & 2\nu\int_{\mathcal{F}}D(\bu):D(\bv)\d \mathcal{F} - \int_{\mathcal{F}}p\div \ \bv\d \mathcal{F}  - \int_{\Gamma} \blambda\cdot \bv\d \Gamma \\
& & -4\nu^2\gamma \int_{\Gamma}\left( D(\bu)\bn \right)\cdot \left( D(\bv)\bn \right)\d \Gamma +2\nu \gamma \int_{\Gamma}p \left(D(\bv)\bn\cdot \bn \right)\d \Gamma +2\nu \gamma \int_{\Gamma} \blambda \cdot \left(D(\bv)\bn\right)\d \Gamma , \\
\mathcal{B}((\bu,p,\blambda);q) & = & - \int_{\mathcal{F}}q\div\ \bu\d \mathcal{F} +2\nu \gamma \int_{\Gamma}q\left(D(\bu)\bn\cdot \bn \right)\d \Gamma -\gamma \int_{\Gamma}pq \d \Gamma - \gamma \int_{\Gamma} q\blambda \cdot \bn\d \Gamma , \\
\mathcal{C}((\bu,p,\blambda);\bmu) & = & -\int_{\Gamma} \bmu \cdot \bu \d \Gamma +2\nu \gamma \int_{\Gamma}\bmu \cdot (D(\bu)\bn)\d \Gamma -\gamma \int_{\Gamma}p(\bmu\cdot \bn)\d \Gamma - \gamma \int_{\Gamma} \blambda \cdot \bmu \d \Gamma .
\end{eqnarray*}


In matrix notation, the previous formulation corresponds to
\begin{eqnarray*}
\left( \begin{matrix}
A_{\bu\bu} & A_{\bu p} & A_{\bu\blambda} \\
A^T_{\bu p} & A_{pp} & A_{p\blambda} \\
A^T_{\bu\blambda} & A^T_{p\blambda} & A_{\blambda \blambda}
\end{matrix} \right)
\left( \begin{matrix}
\boldsymbol{U} \\
\boldsymbol{P} \\
\boldsymbol{\Lambda} \\
\end{matrix} \right)
& = &
\left( \begin{matrix}
\boldsymbol{F} \\
0 \\
\boldsymbol{G}
\end{matrix} \right),
\end{eqnarray*}
where $\boldsymbol{U}$, $\boldsymbol{P}$ and $\boldsymbol{\Lambda}$ are already introduced in section \ref{seccutpres}. As it is done in \cite{Brezzi} or \cite{Ern} for instance, these matrices are discretizations of the following bilinear forms
\begin{eqnarray*}
\mathcal{A}_{\bu\bu} : (\bu,\bv) & \longmapsto & 2\nu\int_{\mathcal{F}}D(\bu):D(\bv)\d \mathcal{F} - 4\nu^2\gamma \int_{\Gamma}\left( D(\bu)\bn \right)\cdot \left( D(\bv)\bn\right)\d \Gamma , \\
\mathcal{A}_{\bu p} : (\bv,p) & \longmapsto & - \int_{\mathcal{F}}p\div \ \bv \d \mathcal{F} + 2\nu \gamma \int_{\Gamma}p \left(D(\bv)\bn\cdot \bn \right)\d \Gamma , \\
\mathcal{A}_{\bu\blambda} : (\bv,\blambda) & \longmapsto & - \int_{\Gamma} \blambda\cdot \bv\d \Gamma + 2\nu \gamma \int_{\Gamma} \blambda \cdot \left(D(\bv)\bn\right)\d \Gamma , \\
\mathcal{A}_{pp} : (p,q) & \longmapsto & -\gamma \int_{\Gamma}pq \d \Gamma , \\
\mathcal{A}_{p\blambda} : (q,\blambda) & \longmapsto & - \gamma \int_{\Gamma} q\blambda \cdot \bn\d \Gamma , \\
\mathcal{A}_{\blambda\blambda} : (\blambda,\bmu) & \longmapsto & -\gamma \int_{\Gamma} \blambda \cdot \bmu \d \Gamma ,
\end{eqnarray*}
and the vectors $\boldsymbol{F}$ and $\boldsymbol{G}$ are the discretization of the following linear forms
\begin{eqnarray*}
\mathcal{L} : \bv & \longmapsto &  \int_{\mathcal{F}} \bff\cdot \bv \d \Gamma , \\
\mathcal{G} : \bmu & \longmapsto & -\int_{\Gamma} \bmu \cdot \bg \d \Gamma.
\end{eqnarray*}
Denoting $\{ \bvarphi_i\}$, $\{ \chi_i\}$ and $\{ \bpsi_i \}$ the selected basis functions of spaces $\tilde{\mathbf{V}}^h$, $\tilde{Q}^h$ and $\tilde{\mathbf{W}}^h$ respectively, we have
\begin{eqnarray*}
& & \left(A_{\bu\bu}\right)_{ij}  =  2\nu \int_{\mathcal{F}}D(\bvarphi_i):D(\bvarphi_j)\d \mathcal{F}
- 4 \nu^2 \gamma \int_{\Gamma} (D(\bvarphi_i) \bn) \cdot (D(\bvarphi_j \bn )) \d \Gamma, \\
& & \left(  A_{\bu p} \right)_{ij}  =  - \int_{\mathcal{F}} \chi_j \div \ \bvarphi_i \d \mathcal{F}
+ 2\nu \gamma \int_{\Gamma} \chi_j(D(\bvarphi_i) \bn \cdot \bn)  \d \Gamma, \\
& & \left(  A_{\bu\blambda} \right)_{ij}  =  - \int_{\Gamma} \bvarphi_i \cdot \bpsi_j \d \Gamma
+ 2\nu \gamma \int_{\Gamma} (D(\bvarphi_i) \bn )\cdot \bpsi_j  \d \Gamma  , \\
& & \left(  A_{p p} \right)_{ij}  =  -\gamma \int_{\Gamma} \chi_i \chi_j \d \Gamma, \\
& & \left(  A_{p \blambda} \right)_{ij}  =  -\gamma \int_{\Gamma} \chi_i ( \bpsi_j \cdot \bn ) \d \Gamma, \\
& & \left(  A_{\blambda \blambda} \right)_{ij}  =  -\gamma \int_{\Gamma} \bpsi_i \bpsi_j \d \Gamma, \\
& & \left(\boldsymbol{F} \right)_i = \int_{\mathcal{F}} \bff \cdot \bvarphi_i \d \mathcal{F},
\quad \left(\boldsymbol{G} \right)_i = - \int_{\Gamma} \bg \cdot \bpsi_i \d \Gamma.
\end{eqnarray*}

\subsection{A theoretical analysis of the stabilized method} \label{seccvstab}
Let us take $ \gamma = \gamma_0 h$ with some constant $\gamma_0 >0$. We first observe that the discrete problem can be rewritten in the following compact form:
\begin{eqnarray*}
& & \text{Find $(\bu^h,p^h,\blambda^h) \in \mathbf{V}^h \times Q^h \times \mathbf{W}^h$ such that} \\
& & \mathcal{M}((\bu^h,p^h,\blambda^h);(\bv^h,q^h,\bmu^h))  =
{\cal H}(\bv^h,q^h,\bmu^h), \quad \forall (\bv^h,q^h,\bmu^h) \in \mathbf{V}^h \times Q^h \times \mathbf{W}^h,
\end{eqnarray*}
where
\begin{eqnarray*}
\mathcal{M}((\bu,p,\blambda);(\bv,q,\bmu)) &=&2\nu \int_{\mathcal{F}}D(\bu):D(\bv)\d \mathcal{F}-\int_{\mathcal{F}}(p\div \ \bv+q\div \ \bu) \d \mathcal{F}-\int_{\Gamma }(\blambda\cdot \bv+\bmu\cdot \bu)\d\Gamma  \\
&&-\gamma_0 h \int_{\Gamma }(2\nu D(\bu)\bn-p\bn-\blambda)\cdot \left( 2\nu D(\bv%
)\bn-q\bn-\bmu\right) \d\Gamma,
\end{eqnarray*}
and
\begin{eqnarray*}
\mathcal{H}(\bv,q,\bmu) = \int_{\mathcal{F}} \bff\cdot \bv \d \Gamma  -\int_{\Gamma} \bmu \cdot \bg \d \Gamma.
\end{eqnarray*}
In the following, we will need some assumptions for our theoretical analysis:
\begin{description}
\item[A1]
For all $\bv^{h}\in \mathbf{V}^{h}$ one has%
\begin{eqnarray*}
h\|D(\bv^{h})n\|_{\mathbf{L}^2(\Gamma)}^{2} & \leq & C\|\bv^{h}\|_{\mathbf{V}}^{2}.
\end{eqnarray*}
\item[A2]
For all $q^{h}\in Q^{h}$ one has%
\begin{eqnarray*}
h\|q^{h}\|_{\L^2(\Gamma)}^{2} & \leq & C\|q^{h}\|_{\L^2(\mathcal{F})}^{2}.
\end{eqnarray*}
\item[A3]
One has the following inf-sup condition for the velocity-pressure pair of finite
element spaces
\begin{eqnarray*}
\inf_{q^{h}\in Q^{h}}\sup_{\bv^{h}\in \mathbf{V}_{0}^{h}}\frac{b(\bv^h,q^h)}{\|q^{h}\|_{\L^2(\mathcal{F})}\|\bv^{h}\|_{\mathbf{V}}} & \geq & \beta,
\end{eqnarray*}
with $\beta >0$ independent of $h$.
\end{description}
\hfill \\
\textcolor{black}{
Assumptions {\bf A1} and {\bf A2} will be discussed in section \ref{secnumstab} by performing some numerical tests.
}
\hfill \\
Note that assumption {\bf A1} is the same as those introduced in \cite{HaslR} (cf. equations (5.1) and (5.5) respectively) in the study of the fictitious domain approach for the Laplace equation stabilized \`a la Barbosa-Hughes. Our assumption {\bf A2} is also similar in nature to those two, and all these three assumptions can be in fact established if one assumes that the intersections of ${\cal F}$ with the triangles of the mesh are not "too
small" (see Appendix B of \cite{HaslR} and section \ref{seccomments}). Although all these assumptions can be violated in practice if a mesh triangle is cut by the boundary $\Gamma$ so that only its tiny portion happens to be inside of ${\cal F}$. \textcolor{black}{The numerical experiments for the Laplace equation in \cite{HaslR} show that such accidents occur rather rarely and their impact on the overall behavior of the method is practically negligible}. This conclusion can be safely transposed to the case of Stokes problem. However, we have now the additional difficulty in the form
of the inf-sup condition {\bf A3}. Of course this condition is verified if one chooses the classical stable pair of finite element spaces, like for instance the Taylor-Hood elements P2/P1 pair for velocity/pressure, and if the boundary $\Gamma$ does not cut the edges of the triangles of the mesh. However, in the general case of an arbitrary geometry, we have by now no evidence of the fulfillment of the inf-sup condition {\bf A3}.\\

\textcolor{black}{We also need the following result for the $L^{2}$-orthogonal projector from ${\bf H}^{1/2}(\Gamma )$ to
$W^{h}$:
\begin{lemma} \label{lemmaA3}
For all $\bv\in \mathbf{H}^{1/2}(\Gamma)$ one has
\begin{eqnarray*}
\|P^{h}\bv-\bv\|_{\mathbf{L}^2(\Gamma)} & \leq & Ch^{1/2}\|\bv\|_{\mathbf{H}^{1/2}(\Gamma)},
\end{eqnarray*}
where $P^{h}$ denotes the $L^{2}$-orthogonal projector from $\mathbf{H}^{1/2}(\Gamma )$ to $\mathbf{W}^{h}$.
\end{lemma}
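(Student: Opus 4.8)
The plan is to exploit the defining property of the $L^{2}$-orthogonal projector, namely that $P^{h}\bv$ is the \emph{best approximation} of $\bv$ in $\mathbf{W}^{h}$ for the $\mathbf{L}^2(\Gamma)$-norm:
$$\|P^{h}\bv-\bv\|_{\mathbf{L}^2(\Gamma)}=\inf_{\bmu^{h}\in\mathbf{W}^{h}}\|\bv-\bmu^{h}\|_{\mathbf{L}^2(\Gamma)}.$$
Hence it suffices to exhibit a single element $\bmu^{h}\in\mathbf{W}^{h}$ realizing the rate $h^{1/2}$. Since $\mathbf{W}^{h}=\tilde{\mathbf{W}}^{h}_{|\Gamma}$ is the trace on $\Gamma$ of a finite element space living on the whole fictitious domain $\mathcal{O}$, I would build $\bmu^{h}$ by lifting, interpolating, and restricting.

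First I would extend $\bv$ off the curve: by the trace theorem there is a bounded lifting $R:\mathbf{H}^{1/2}(\Gamma)\to\mathbf{H}^1(\mathcal{O})$ with $(R\bv)_{|\Gamma}=\bv$ and $\|R\bv\|_{\mathbf{H}^1(\mathcal{O})}\le C\|\bv\|_{\mathbf{H}^{1/2}(\Gamma)}$. Next I would apply a quasi-interpolation operator $\Pi^{h}$ onto $\tilde{\mathbf{W}}^{h}$ of Scott--Zhang or Cl\'ement type (which is well defined on merely $\mathbf{H}^1$ functions), and set $\bmu^{h}:=(\Pi^{h}R\bv)_{|\Gamma}\in\mathbf{W}^{h}$. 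The standard local interpolation and stability estimates give simultaneously
$$\|R\bv-\Pi^{h}R\bv\|_{\mathbf{L}^2(\mathcal{O})}\le Ch\,\|R\bv\|_{\mathbf{H}^1(\mathcal{O})},\qquad \|\nabla(R\bv-\Pi^{h}R\bv)\|_{\mathbf{L}^2(\mathcal{O})}\le C\,\|R\bv\|_{\mathbf{H}^1(\mathcal{O})}.$$

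The crucial step is to convert these bulk estimates into an estimate on $\Gamma$. For each mesh element $T$ met by $\Gamma$ I would invoke the (continuous) multiplicative trace inequality
$$\|w\|_{\mathbf{L}^2(\Gamma\cap T)}^{2}\le C\left(h_{T}^{-1}\|w\|_{\mathbf{L}^2(T)}^{2}+h_{T}\|\nabla w\|_{\mathbf{L}^2(T)}^{2}\right),\qquad w\in\mathbf{H}^1(T),$$
apply it to $w=R\bv-\Pi^{h}R\bv$, and sum over the cut elements (which are disjoint and whose union covers $\Gamma$). Using the two bulk bounds, the $h_{T}^{-1}$-term contributes $Ch^{-1}(h\|R\bv\|_{\mathbf{H}^1(\mathcal{O})})^{2}=Ch\|R\bv\|_{\mathbf{H}^1(\mathcal{O})}^{2}$ and the $h_{T}$-term contributes $Ch\|R\bv\|_{\mathbf{H}^1(\mathcal{O})}^{2}$; together with $(R\bv)_{|\Gamma}=\bv$ and the boundedness of $R$, this yields $\|\bv-\bmu^{h}\|_{\mathbf{L}^2(\Gamma)}\le Ch^{1/2}\|\bv\|_{\mathbf{H}^{1/2}(\Gamma)}$, and the best-approximation property closes the argument.

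I expect the main obstacle to be the uniformity of the trace-inequality constant $C$ over all elements cut by $\Gamma$: a priori the portion $\Gamma\cap T$ could be awkwardly placed, which is exactly the kind of pathological cut that also threatens assumptions \textbf{A1}--\textbf{A2}. I would control it using the smoothness of $\Gamma$ together with the shape-regularity of the family $\{\mathcal{T}^{h}\}$ and the assumption that $h$ is small enough that every element is cut in a simple way, which secures a uniform constant. As an alternative route avoiding the explicit trace inequality, one may establish the two integer-order bounds $\|(I-P^{h})\bv\|_{\mathbf{L}^2(\Gamma)}\le\|\bv\|_{\mathbf{L}^2(\Gamma)}$ (from orthogonality) and $\|(I-P^{h})\bv\|_{\mathbf{L}^2(\Gamma)}\le Ch\|\bv\|_{\mathbf{H}^1(\Gamma)}$, and then interpolate between them using $\mathbf{H}^{1/2}(\Gamma)=[\mathbf{L}^2(\Gamma),\mathbf{H}^1(\Gamma)]_{1/2}$ to recover the exponent $h^{1/2}$.
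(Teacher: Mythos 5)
Your proof is correct, but your primary route is genuinely different from the one in the paper; amusingly, the ``alternative route'' you mention in your final sentence is essentially the paper's actual proof. The paper works intrinsically on the curve: for discontinuous elements it introduces the midpoint interpolation operator $I^h$ onto piecewise constants on the induced boundary mesh $\mathcal{T}^h_\Gamma$ (the arcs $\tau_T=\Gamma\cap T$), deduces $\|P^h\bv-\bv\|_{\mathbf{L}^2(\Gamma)}\le\|I^h\bv-\bv\|_{\mathbf{L}^2(\Gamma)}\le Ch\|\bv\|_{\mathbf{H}^1(\Gamma)}$ together with the trivial stability bound $\|P^h\bv-\bv\|_{\mathbf{L}^2(\Gamma)}\le\|\bv\|_{\mathbf{L}^2(\Gamma)}$, and concludes by interpolation of spaces, $\mathbf{H}^{1/2}(\Gamma)=\left[\mathbf{L}^2(\Gamma),\mathbf{H}^1(\Gamma)\right]_{1/2}$, citing \cite{bren} --- so where you left the integer-order $O(h)$ bound unproved in your alternative, the paper supplies it by one-dimensional interpolation on $\Gamma$. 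Your main route --- lifting $\bv$ to $\mathbf{H}^1(\mathcal{O})$, quasi-interpolating onto $\tilde{\mathbf{W}}^h$, and passing from bulk to boundary via the elementwise trace inequality on cut triangles --- is also sound: the reduction to best approximation is exactly right, and the uniformity of the trace constant over arbitrarily cut elements, which you correctly flag as the delicate point, is precisely the inequality the paper itself invokes in its convergence analysis (valid for smooth $\Gamma$ and shape-regular meshes, proved in Appendix A of \cite{HaslR}), so it holds even for tiny intersections $\Gamma\cap T$ and you do not need your extra ``cut in a simple way'' hypothesis. Comparing the two: your bulk argument avoids Banach-space interpolation theory entirely and covers continuous and discontinuous choices of $\tilde{\mathbf{W}}^h$ uniformly (for P0, take the elementwise $\mathbf{L}^2$ projection, whose gradient vanishes on each element, so all estimates are elementwise and the global discontinuity is harmless), at the cost of importing the cut-element trace inequality; the paper's argument avoids cut-element geometry altogether by working on the arcs, at the cost of the real-interpolation machinery and of being slightly more technical for continuous multiplier elements, as the paper itself remarks.
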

}
\textcolor{black}{
\begin{proof}
This result is well-known, but we provide for completeness a sketch of the proof in the case when discontinuous finite elements are chosen for the space $\tilde{\bf W}_h$, so that ${\bf W}_h$ contains piecewise constant functions on the mesh ${\cal T}^h_\Gamma$ on $\Gamma$ induced by the mesh ${\cal T}^h$ on $\mathcal{O}$ (the elements of ${\cal T}^h_\Gamma$ are the arcs of $\Gamma$ obtained by intersecting $\Gamma$ with the triangles from ${\cal T}^h$). The proof in the case of continuous finite elements is similar but slightly more technical.\\
Let $I^{h}$ be the interpolation operator to the space of piecewise constant functions on ${\cal T}^h_\Gamma$. For all sufficiently smooth function $ \bv $ on $\Gamma$ and for all element $\tau_T$ of the curve $\Gamma$ obtained by intersection with a triangle $T\in{\cal T}^h$, we set
\begin{eqnarray*}
I^{h} \bv |{\tau_T}= \bv (\x_T),
\end{eqnarray*}
where $\x_T$ is the middle point of $\tau_T$. We have then $I^h  \bv \in {\bf W}_h$ and
\begin{eqnarray*}
\|P^{h}\bv -\bv \|_{\mathbf{L}^{2}(\Gamma)} \leq
\|I^{h}\bv -\bv\|_{\mathbf{L}^{2}(\Gamma)} \leq C h \|\bv\|_{\mathbf{H}^{1}(\Gamma)}, \quad \forall \bv \in \mathbf{H}^{1}(\Gamma),
\end{eqnarray*}
by the standard interpolation estimates. Moreover,
\begin{eqnarray*}
\|P^{h}\bv -\bv \|_{\mathbf{L}^{2}(\Gamma)} \leq \|\bv\|_{\mathbf{L}^{2}(\Gamma)}, \quad \forall \bv\in \mathbf{L}^{2}(\Gamma).
\end{eqnarray*}
Interpolating between the last two estimates (see the last chapter of \cite{bren}) we get the desired result.
\end{proof}
}

We prove in this subsection the following inf-sup result, which is an adaptation of Lemma 3 from \cite{HaslR}.\\
\begin{lemma} \label{lemmainfsup}
Under assumptions {\bf A1}--{\bf A3}, there exists for $\gamma_0$ small enough a mesh-independent constant $c>0$ such that
\begin{eqnarray*}
\inf_{(\bu^{h},p^{h},\blambda^{h})\in \mathbf{V}^{h}\times Q^{h}\times \mathbf{W}^{h}} \sup_{(\bv^{h},q^{h},\bmu^{h})\in \mathbf{V}^{h}\times Q^{h}\times \mathbf{W}^{h}}\frac{%
\mathcal{M}((\bu^{h},p^{h},\blambda^{h});(\bv^{h},q^{h},\bmu ^{h}))}{%
|||\bu^{h},p^{h},\blambda^{h}|||\,|||\bv^{h},q^{h},\bmu ^{h}|||}\geq c,
\end{eqnarray*}%
where the triple norm is defined by%
\begin{eqnarray*}
|||\bu,p,\blambda |||=\left( \|\bu\|_{\mathbf{V}}^{2}+\|p\|_{\L^2(\mathcal{F})}^{2}+h\|D(\bu)\bn%
\|_{\mathbf{L}^2(\Gamma) }^{2}+h\|p\|_{\L^2(\Gamma) }^{2}+h\|\blambda \|_{\mathbf{L}^2(\Gamma) }^{2}+%
\frac{1}{h}\|\bu\|_{\mathbf{L}^2(\Gamma) }^{2}\right) ^{1/2},
\end{eqnarray*}%
and $c$ is a mesh-independent constant.
\end{lemma}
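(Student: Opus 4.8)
The plan is to establish the inf-sup inequality by the explicit test-function technique: for a given triple $(\bu^h,p^h,\blambda^h)$ I would exhibit $(\bv^h,q^h,\bmu^h)$ satisfying simultaneously a lower bound $\mathcal{M}((\bu^h,p^h,\blambda^h);(\bv^h,q^h,\bmu^h)) \ge c\,|||\bu^h,p^h,\blambda^h|||^2$ and a continuity bound $|||\bv^h,q^h,\bmu^h||| \le C\,|||\bu^h,p^h,\blambda^h|||$; dividing one by the other yields the claim. Before constructing it, I would observe that assumptions $\mathbf{A1}$ and $\mathbf{A2}$ make three of the six terms of the triple norm redundant: $h\|D(\bu^h)\bn\|^2_{\mathbf{L}^2(\Gamma)} \le C\|\bu^h\|^2_{\mathbf{V}}$ by $\mathbf{A1}$, $h\|p^h\|^2_{\L^2(\Gamma)} \le C\|p^h\|^2_{\L^2(\mathcal{F})}$ by $\mathbf{A2}$, and, writing $\blambda^h = (p^h\bn+\blambda^h) - p^h\bn$, also $h\|\blambda^h\|^2_{\mathbf{L}^2(\Gamma)} \le C(h\|p^h\bn+\blambda^h\|^2_{\mathbf{L}^2(\Gamma)} + \|p^h\|^2_{\L^2(\mathcal{F})})$. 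Hence it suffices to produce, in the lower bound, the four quantities $\|\bu^h\|^2_{\mathbf{V}}$, $\|p^h\|^2_{\L^2(\mathcal{F})}$, $h\|p^h\bn+\blambda^h\|^2_{\mathbf{L}^2(\Gamma)}$ and $\tfrac1h\|\bu^h\|^2_{\mathbf{L}^2(\Gamma)}$.

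The test function would be assembled, by bilinearity of $\mathcal{M}$, from three ingredients weighted by small parameters $\theta_2,\theta_3>0$ to be fixed at the end. The first ingredient is the diagonal choice $(\bu^h,-p^h,-\blambda^h)$: here the mixed terms cancel, the algebraic identity $(\mathbf{a}-\mathbf{b})\cdot(\mathbf{a}+\mathbf{b})=|\mathbf{a}|^2-|\mathbf{b}|^2$ applied with $\mathbf{a}=2\nu D(\bu^h)\bn$ and $\mathbf{b}=p^h\bn+\blambda^h$ turns the stabilization integral into $-\gamma_0 h\|2\nu D(\bu^h)\bn\|^2_{\mathbf{L}^2(\Gamma)}+\gamma_0 h\|p^h\bn+\blambda^h\|^2_{\mathbf{L}^2(\Gamma)}$, and Lemma \ref{lemmaVoh} together with $\mathbf{A1}$ give, for $\gamma_0$ small enough, the bound $\tfrac{\alpha}{2}\|\bu^h\|^2_{\mathbf{V}}+\gamma_0 h\|p^h\bn+\blambda^h\|^2_{\mathbf{L}^2(\Gamma)}$. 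The second ingredient is $(\bw^h,0,0)$, where $\bw^h\in\mathbf{V}^h_0$ is the velocity supplied by the inf-sup condition $\mathbf{A3}$ with $-\int_{\mathcal{F}}p^h\div\,\bw^h \ge \beta\|p^h\|^2_{\L^2(\mathcal{F})}$ and $\|\bw^h\|_{\mathbf{V}}\le C\|p^h\|_{\L^2(\mathcal{F})}$; since $\bw^h\in\mathbf{V}^h_0$ the boundary pairing $\int_\Gamma\blambda^h\cdot\bw^h$ vanishes, and after controlling the remaining viscous and stabilization cross terms by Young's inequality (using $\mathbf{A1}$ for the boundary factor of $\bw^h$) this contributes $\tfrac{\beta}{2}\|p^h\|^2_{\L^2(\mathcal{F})}$ up to terms of the form $C(\|\bu^h\|^2_{\mathbf{V}}+h\|2\nu D(\bu^h)\bn-p^h\bn-\blambda^h\|^2_{\mathbf{L}^2(\Gamma)})$.

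The third ingredient, $(0,0,-\tfrac1h P^h\bu^h)$, is the delicate one: the pairing $-\int_\Gamma\bmu^h\cdot\bu^h$ becomes $\tfrac1h\|P^h\bu^h\|^2_{\mathbf{L}^2(\Gamma)}$ because $P^h$ is the $\mathbf{L}^2(\Gamma)$-projector, and the stabilization cross term is again absorbable. This produces only $\tfrac1h\|P^h\bu^h\|^2_{\mathbf{L}^2(\Gamma)}$, whereas the triple norm demands the full $\tfrac1h\|\bu^h\|^2_{\mathbf{L}^2(\Gamma)}$. The gap is closed by Lemma \ref{lemmaA3} and the trace theorem: $\tfrac1h\|\bu^h-P^h\bu^h\|^2_{\mathbf{L}^2(\Gamma)} \le \tfrac1h\,Ch\|\bu^h\|^2_{\mathbf{H}^{1/2}(\Gamma)} \le C\|\bu^h\|^2_{\mathbf{V}}$, so the missing part is controlled by the coercivity already available. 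Collecting the three contributions, choosing $\gamma_0$, then $\theta_2,\theta_3$ small enough so that every negative term — all of the shape $C(\theta_2+\theta_3)(\|\bu^h\|^2_{\mathbf{V}}+h\|2\nu D(\bu^h)\bn-p^h\bn-\blambda^h\|^2_{\mathbf{L}^2(\Gamma)})$, the latter bounded through $\mathbf{A1}$ — is absorbed into $\tfrac{\alpha}{2}\|\bu^h\|^2_{\mathbf{V}}$ and $\gamma_0 h\|p^h\bn+\blambda^h\|^2_{\mathbf{L}^2(\Gamma)}$, yields a lower bound by the four target quantities, hence by $|||\bu^h,p^h,\blambda^h|||^2$. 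The continuity estimate on the test function is routine, the only point worth checking being that $\tfrac1h\|\bw^h\|^2_{\mathbf{L}^2(\Gamma)}\le C\|p^h\|^2_{\L^2(\mathcal{F})}$, which follows from $P^h\bw^h=0$ (as $\bw^h\in\mathbf{V}^h_0$) and Lemma \ref{lemmaA3} applied to $\bw^h$.

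I expect the main obstacle to be precisely the treatment of the $\tfrac1h\|\bu^h\|^2_{\mathbf{L}^2(\Gamma)}$ term: the multiplier slot can only \emph{see} the projection $P^h\bu^h$, so the component of the velocity trace orthogonal to $\mathbf{W}^h$ must be recovered from elsewhere, and it is the combination of the $h^{1/2}$ approximation estimate of Lemma \ref{lemmaA3} with the adverse $1/h$ weight — which cancel exactly — that makes the argument work and dictates the scaling $\gamma=\gamma_0 h$. The secondary difficulty is the bookkeeping of the smallness of $\gamma_0,\theta_2,\theta_3$, so that all stabilization cross terms, uniformly estimated through $\mathbf{A1}$, are genuinely absorbed rather than merely bounded.
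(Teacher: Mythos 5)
Your proposal is correct and follows essentially the same route as the paper's proof: the same three test-function ingredients $(\bu^{h},-p^{h},-\blambda^{h})$ with the $(\mathbf{a}-\mathbf{b})\cdot(\mathbf{a}+\mathbf{b})$ identity, $(\bv_{p}^{h},0,0)$ supplied by \textbf{A3}, and $\bigl(0,0,-\tfrac{1}{h}P^{h}\bu^{h}\bigr)$, with the projector eliminated via Lemma \ref{lemmaA3} plus the trace inequality, and the continuity bound resting on $P^{h}\bv_{p}^{h}=0$ exactly as in the paper. The only deviation is organizational: you discharge the redundant triple-norm terms upfront (splitting $h\|\blambda^{h}\|_{\mathbf{L}^2(\Gamma)}^{2}$ off $h\|p^{h}\bn+\blambda^{h}\|_{\mathbf{L}^2(\Gamma)}^{2}$ by the triangle inequality and \textbf{A2}), whereas the paper reinstates them at the end with an explicit Young parameter $t=\kappa/(2C\gamma_{0})$ --- a cosmetic difference.
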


\begin{proof}
We observe that%
\begin{eqnarray*}
\mathcal{M}((\bu^{h},p^{h},\blambda^{h});(\bu^{h},-p^{h},-\blambda ^{h}))
&=&2\nu \|\bu^{h}\|_{\mathbf{V}}^{2}-\gamma_{0}h\int_{\Gamma }4\nu ^{2}|D(\bu^{h})\bn |^{2}\d \Gamma +\gamma_{0}h\int_{\Gamma}|p^{h}\bn-\blambda^{h}|^{2}\d\Gamma \\
&\geq &\nu \|\bu^{h}\|_{\mathbf{V}}^{2}+\gamma _{0}h\|p^{h}\bn+\blambda ^{h}\|_{\mathbf{L}^2(\Gamma)}^{2}
\end{eqnarray*}%
where we have used assumption {\bf A1} and the fact that $\gamma_{0}$ can be taken
sufficiently small. More precisely, we can choose $\gamma_0$ such that $4\nu ^{2}\gamma _{0}C\leq \nu $, where $C$ is
the constant of assumption {\bf A1}. The inf-sup condition {\bf A3} implies that for all $p^{h}\in Q^{h}$ there exists $\bv_{p}^{h}\in \mathbf{V}_{0}^{h}$ such that%
\begin{eqnarray}
-\int_{\mathcal{F}}p^{h}\div \ \bv_{p}^{h}\d \mathcal{F}=\|p^{h}\|_{\L^2(\mathcal{F})}^{2}%
& \quad \text{and }&\|\bv_{p}^{h}\|_{\mathbf{V}}\leq C\|p^{h}\|_{\L^2(\mathcal{F})}.  \label{vhp}
\end{eqnarray}%
Now let us observe that%
\begin{eqnarray*}
\mathcal{M}((\bu^{h},p^{h},\blambda^{h});(\bv_{p}^{h},0,0)) &=&2\nu \int_{%
\mathcal{F}}D(\bu^{h}):D(\bv_{p}^{h})\d \mathcal{F}+\|p^{h}\|_{\L^2(\mathcal{F})}^{2}\\
& & -2\nu \gamma _{0}h\int_{\Gamma }(2\nu D(\bu^{h})\bn-p^{h}\bn-\blambda^{h})\cdot D(%
\bv_{p}^{h})\bn \d\Gamma  \\
&\geq &\|p^{h}\|_{\L^2(\mathcal{F})}^{2}-\nu \alpha \|\bu^{h}\|_{\mathbf{V}}^{2}-\frac{\nu }{\alpha
}\|\bv_{p}^{h}\|_{\mathbf{V}}^{2} \\
& & -\nu \gamma _{0}h\alpha \|2\nu D(\bu^{h})\bn-p^{h}\bn%
-\blambda^{h}\|_{\mathbf{L}^2(\Gamma)}^{2}-\frac{\nu \gamma _{0}h}{\alpha }\|D(\bv%
_{p}^{h})\bn\|_{\mathbf{L}^2(\Gamma)}^{2}.
\end{eqnarray*}%
We have used here the Young inequality which is valid for any $\alpha >0$.
In particular, we can choose $\alpha $ large enough so that we can conclude
with the aid of assumptions {\bf A1} and {\bf A2} (the constant $C$ here will be
independent of $\alpha $ and $h$, but dependent on $\gamma _{0}$ and on the
constants in the inequalities {\bf A1} and {\bf A2}). We get
\begin{eqnarray*}
\mathcal{M}((\bu^{h},p^{h},\blambda^{h});(\bv_{p}^{h},0,0)) &\geq
&\|p^{h}\|_{\L^2(\mathcal{F})}^{2}-\nu \alpha \|\bu^{h}\|_{\mathbf{V}}^{2}-\frac{C}{\alpha }%
\|p^{h}\|_{\L^2(\mathcal{F})}^{2}-C\alpha h\|D(\bu^{h})\bn\|_{\mathbf{L}^2(\Gamma)}^{2}\\
& & -\nu \gamma_{0}h\alpha \|p^{h}\bn+\blambda^{h}\|_{\mathbf{L}^2(\Gamma)}^{2}-\frac{C}{\alpha }%
\|p^{h}\|_{\L^2(\Gamma)}^{2} \\
&\geq &\frac{1}{2}\|p^{h}\|_{\L^2(\mathcal{F})}^{2}-C\alpha \|\bu^{h}\|_{\mathbf{V}}^{2}-\nu \gamma
_{0}h\alpha \|p^{h}\bn+\blambda^{h}\|_{\mathbf{L}^2(\Gamma)}^{2}.
\end{eqnarray*}%
Let us now take $\bar{\bmu}_{h}=-\frac{1}{h}P^{h}\bu^{h}$ where $P^{h}$ is the
projector from $\mathbf{H}^{1/2}(\Gamma)$ to $\mathbf{W}^{h}$. Observe that, in using assumption {\bf A1}, we have%
\begin{eqnarray*}
\mathcal{M}((\bu^{h},p^{h},\blambda^{h});(0,0,\bar{\bmu}^{h})) &=&\frac{1}{h}%
\|P^{h}\bu^{h}\|_{\mathbf{L}^2(\Gamma)}^{2}-\gamma _{0}\int_{\Gamma }(2\nu D(\bu^{h})\bn%
-p^{h}\bn-\blambda^{h})\cdot P^{h}\bu^{h}\d \Gamma  \\
&\geq &\frac{1}{h}\|P^{h}\bu^{h}\|_{\mathbf{L}^2(\Gamma)}^{2} \\
& & -\gamma _{0}\left( \sqrt{h}%
\|D(\bu^{h})\bn\|_{\mathbf{L}^2(\Gamma)}+\sqrt{h}\|p^{h}\bn+\blambda^{h}\|_{\mathbf{L}^2(\Gamma)
}\right) \frac{1}{\sqrt{h}}\|P^{h}\bu^{h}\|_{\mathbf{L}^2(\Gamma)} \\
&\geq &\frac{1}{2h}\|P^{h}\bu^{h}\|_{\mathbf{L}^2(\Gamma)}^{2}-C\|\bu%
^{h}\|_{\mathbf{V}}^{2}-Ch\|p^{h}\bn+\blambda^{h}\|_{\mathbf{L}^2(\Gamma)}^{2}.
\end{eqnarray*}%
Combining the above inequalities and taking some small enough numbers $\kappa
>0$ and $\eta >0$, we can obtain%
\begin{eqnarray*}
& &
\mathcal{M}((\bu^{h},p^{h},\blambda^{h});(\bu^{h}+\kappa \bv_{p}^{h},-p^{h},-\blambda ^{h}+\eta \bar{\bmu}^{h})) \\
& & \geq \nu \|\bu^{h}\|_{\mathbf{V}}^{2}+\gamma _{0}h\|p^{h}\bn+\blambda^{h}\|_{\mathbf{L}^2(\Gamma) }^{2}
+\frac{\kappa }{2}\|p^{h}\|_{\L^2(\mathcal{F})}^{2}-C\alpha \kappa \|\bu^{h}\|_{\mathbf{V}}^{2}-\nu \gamma_{0}h\alpha \kappa \|p^{h}\bn+\blambda^{h}\|_{\mathbf{L}^2(\Gamma) }^{2}\\
& &  \quad +\frac{\eta }{2h}\|P^{h}\bu^{h}\|_{\mathbf{L}^2(\Gamma)}^{2} - C\eta \|\bu^{h}\|_{\mathbf{V}}^{2} - C\eta h\|p^{h}\bn+\blambda^{h}\|_{\mathbf{L}^2(\Gamma)}^{2} \\
& & \geq \frac{\nu }{2}\|\bu^{h}\|_{\mathbf{V}}^{2}+\frac{\kappa }{2} \|p^{h}\|_{\L^2(\mathcal{F})}^{2}
+\frac{\gamma _{0}}{2}h\|p^{h}\bn+\blambda^{h}\|_{\mathbf{L}^2(\Gamma)}^{2}+\frac{\eta }{2h}\|P^{h}\bu^{h}\|_{\mathbf{L}^2(\Gamma)}^{2} \\
& & \geq \frac{\nu }{4}\|\bu^{h}\|_{\mathbf{V}}^{2} +\frac{\eta }{2h}\|P^{h}\bu^{h}\|_{\mathbf{L}^2(\Gamma)}^{2}
+\frac{\nu }{4C}h\|D(\bu^{h})n\|_{\mathbf{L}^2(\Gamma)}^{2} \\
& & +\frac{\kappa }{4}\|p^{h}\|_{\L^2(\mathcal{F})}^{2}+\frac{\kappa }{%
4C}h\|p^{h}\|_{\L^2(\Gamma)}^{2}+\frac{\gamma _{0}}{2}h\|p^{h}\bn+\blambda^{h}\|_{\mathbf{L}^2(\Gamma)}^{2}.
\end{eqnarray*}%
In the last line, we have used again assumptions {\bf A1} and {\bf A2} (with the corresponding
constant $C$). We now rework the last two terms in order to split $p^h$ and $\blambda^h$. Denoting $\displaystyle t=\frac{\kappa }{2C\gamma _{0}}$, we have%
\begin{eqnarray*}
\frac{\kappa }{4C}h\|p^{h}\|_{\L^2(\Gamma)}^{2}+\frac{\gamma _{0}}{2}h\|p^{h}\bn%
+\blambda^{h}\|_{\mathbf{L}^2(\Gamma)}^{2} &=&\frac{\gamma _{0}}{2}h\left(
(t+1)\|p^{h}\|_{\L^2(\Gamma)}^{2}+\|\blambda^{h}\|_{\mathbf{L}^2(\Gamma)}^{2}+2\int_{\Gamma }p^{h}\bn\cdot \blambda^{h}\d \Gamma \right)  \\
&\geq &\frac{\gamma _{0}}{2}h\left( (t+1)\|p^{h}\|_{\L^2(\Gamma)}^{2}
+\|\blambda^{h}\|_{\mathbf{L}^2(\Gamma)}^{2}-(t/2+1)\|p^{h}\|_{\L^2(\Gamma)}^{2} \right.\\
& & \left.-\frac{1}{t/2+1}\|\blambda^{h}\|_{\mathbf{L}^2(\Gamma)}^{2}\right)  \\
&=&\frac{\gamma _{0}}{2}h\left( \frac{t}{2}\|p^{h}\|_{\L^2(\Gamma)}^{2}+\frac{t/2}{t/2+1}\|\blambda^{h}\|_{\mathbf{L}^2(\Gamma)}^{2}\right).
\end{eqnarray*}%
So we finally have%
\begin{align*}
&
\mathcal{M}((\bu^{h},p^{h},\blambda^{h});(\bu^{h}+\kappa
\bv_{p}^{h},-p^{h},-\blambda ^{h}+\eta \bmu ^{h}))\\
&\quad
\geq c\left( \|\bu^{h}\|_{\mathbf{V}}^{2}+\|p^{h}\|_{\L^2(\mathcal{F})}^{2}+h\|D(\bu^{h})n\|_{\mathbf{L}^2(\Gamma)
}^{2}+h\|p^{h}\|_{\L^2(\Gamma)}^{2}+h\|\blambda^{h}\|_{\mathbf{L}^2(\Gamma)}^{2}+\frac{1}{h%
}\|P^{h}\bu^{h}\|_{\mathbf{L}^2(\Gamma)}^{2}\right).
\end{align*}%
We can now eliminate the projector $P^{h}$ in this estimate by the
following calculation, which is valid for some $\beta >0$ small enough%
\begin{eqnarray*}
\|\bu^{h}\|_{\mathbf{V}}^{2}+\frac{1}{h}\|P^{h}\bu^{h}\|_{\mathbf{L}^2(\Gamma)}^{2} &\geq &\|\bu%
^{h}\|_{\mathbf{V}}^{2}+\frac{\beta }{h}\|P^{h}\bu^{h}\|_{\mathbf{L}^2(\Gamma)}^{2}=\|\bu%
^{h}\|_{\mathbf{V}}^{2}+\frac{\beta }{h}\left( \|\bu^{h}\|_{\mathbf{L}^2(\Gamma)}^{2}-\|\bu^{h}-P^{h}%
\bu^{h}\|_{\mathbf{L}^2(\Gamma)}^{2}\right)  \\
&\geq &\|\bu^{h}\|_{\mathbf{V}}^{2}+\frac{\beta }{h}\|\bu^{h}\|_{\mathbf{L}^2(\Gamma)}^{2}-C\beta
\|\bu^{h}\|_{\mathbf{H}^{1/2}(\Gamma)}^{2} \\
& \geq & \|\bu^{h}\|_{\mathbf{V}}^{2}+\frac{\beta }{h}%
\|\bu^{h}\|_{\mathbf{L}^2(\Gamma)}^{2}-C\beta \|\bu^{h}\|_{\mathbf{V}}^{2} \\
&\geq &\frac{1}{2}\|\bu^{h}\|_{\mathbf{V}}^{2}+\frac{\beta }{h}\|\bu^{h}\|_{\mathbf{L}^2(\Gamma)}^{2}.
\end{eqnarray*}%
We have used \textcolor{black}{the result of Lemma \ref{lemmaA3}} and the trace inequality.

In summary, we have obtained that taking%
\begin{eqnarray*}
(\bv^{h},q^{h},\bmu ^{h}) & = & (\bu^{h}+\kappa \bv_{p}^{h},-p^{h},-\blambda ^{h}+\eta
\bar{\bmu}^{h})
\end{eqnarray*}%
one has%
\begin{eqnarray}
\mathcal{M}((\bu^{h},p^{h},\blambda^{h});(\bv^{h},q^{h},\bmu ^{h})) & \geq & c|||\bu%
^{h},p^{h},\blambda^{h}|||^{2}.  \label{al1}
\end{eqnarray}%
On the other hand,%
\begin{eqnarray}
|||\bv^{h},q^{h},\bmu ^{h}||| & \leq & M|||\bu^{h},p^{h},\blambda^{h}|||  \label{al2}
\end{eqnarray}%
with some $M>0$ independent of $h$. Indeed, we have
\begin{eqnarray*}
|||\bv^{h},q^{h},\bmu ^{h}||| &\leq &|||\bu^{h},p^{h},\blambda ^{h}|||+\kappa
|||\bv_{p}^{h},0,0|||+\eta |||0,0,\bar{\bmu}^{h}||| \\
&\leq &|||\bu^{h},p^{h},\blambda ^{h}|||+\kappa \left(
\|\bv_{p}^{h}\|_{\mathbf{V}}^{2}+h\|D(\bv_{p}^{h})\bn\|_{\mathbf{L}^2(\Gamma)}^{2}+\frac{1}{h}%
\|\bv_{p}^{h}\|_{\mathbf{L}^2(\Gamma)}^{2}\right)^{1/2}+\eta \sqrt{h}\|\bar{\bmu}%
^{h}\|_{\mathbf{L}^2(\Gamma)}.
\end{eqnarray*}%
Now, by assumption {\bf A1} and the fact that $\bv_{p}^{h}\in \mathbf{V}_{0}^{h}$ so that $%
P^{h}\bv_{p}^{h}=0$, we have
\begin{eqnarray*}
\|\bv_{p}^{h}\|_{\mathbf{V}}^{2}+h\|D(\bv_{p}^{h})\bn\|_{\mathbf{L}^2(\Gamma)}^{2}+\frac{1}{h}%
\|\bv_{p}^{h}\|_{\mathbf{L}^2(\Gamma)}^{2} & \leq & C\|\bv_{p}^{h}\|_{\mathbf{V}}^{2}+\frac{1}{h}%
\|\bv_{p}^{h}-P^{h}\bv_{p}^{h}\|_{\mathbf{L}^2(\Gamma)}^{2}.
\end{eqnarray*}%
Furthermore, \textcolor{black}{by Lemma \ref{lemmaA3}} and by the definition of $%
\bv_{p}^{h} \in \mathbf{V}_0^h$ given in \eqref{vhp}, we have
\begin{eqnarray*}
\|\bv_{p}^{h}\|_{\mathbf{V}}^{2}+h\|D(\bv_{p}^{h})\bn\|_{\mathbf{L}^2(\Gamma)}^{2}+\frac{1}{h}%
\|\bv_{p}^{h}\|_{\mathbf{L}^2(\Gamma)}^{2} & \leq & C\|\bv_{p}^{h}\|_{\mathbf{V}}^{2}+C\|\bv_{p}^{h}\|_{\mathbf{H}^{1/2}(\Gamma)}^{2} \\
& \leq & C\|\bv_{p}^{h}\|_{\mathbf{V}}^{2}\leq C\|p^{h}\|_{\L^2(\mathcal{F})}^{2}\leq C|||\bu^{h},p^{h},\blambda^{h}|||.
\end{eqnarray*}%
We have also
\begin{equation*}
\sqrt{h}\|\bar{\bmu}^{h}\|_{\mathbf{L}^2(\Gamma)}=\frac{1}{\sqrt{h}}\|P^{h}\bu^{h}\|_{\mathbf{L}^2(\Gamma)}
\leq \frac{1}{\sqrt{h}}\|\bu^{h}\|_{\mathbf{L}^2(\Gamma)}\leq
|||\bu^{h},p^{h},\blambda ^{h}|||,
\end{equation*}%
hence the inequality \eqref{al2}. Dividing \eqref{al1} by \eqref{al2} yields%
\begin{eqnarray*}
\frac{\mathcal{M}((\bu^{h},p^{h},\blambda^{h});(\bv^{h},q^{h},\bmu ^{h}))}{%
|||\bv^{h},q^{h},\bmu ^{h}|||} & \geq & \frac{c}{M}|||\bu^{h},p^{h},\blambda ^{h}|||,
\end{eqnarray*}%
which is the desired result.
\end{proof}

The lemma above, combined with the fact that the bilinear form ${\cal M}$ is bounded in the triple norm on ${\bf V}\times Q \times \mathbf{W}$ uniformly with respect to $h$, leads us by a C\'ea type lemma (cf. \cite{Ern} or Theorem 5.2 in \cite{HaslR}) to the following abstract error estimate
$$
|||\bu-\bu^{h},p-p^{h},\blambda-\blambda ^{h}|||
\le C
\inf_{(\bv^{h},q^{h},\bmu ^{h})\in \mathbf{V}^{h}\times Q^{h}\times \mathbf{W}^{h}}
|||\bu-\bv^{h},p-q^{h},\blambda-\bmu ^{h}|||.
$$
Using the extension theorem for the Sobolev spaces, the standard estimates for the nodal (or Cl\'ement if necessary) finite element interpolation operators, and the trace inequality $\|w\|_{\L^2(\Gamma)} \le C \left( h^{-1 } \|w\|_{\L^2(T)} +h \|w\|_{\L^2(T)}\right)$ for any $w\in \H^1(T)$ on any triangle $T\in \mathcal{T}_h$ (which is valid provided $\Gamma$ is sufficiently smooth - see Appendix A of \cite{HaslR} for a proof),
we obtain the following error estimate
\begin{eqnarray*}\label{eststab}
&&\max ( \|\bu-\bu^{h}\|_{\mathbf{V}} , \|p-p^h\|_{\L^2(\mathcal{F})}, h\|\blambda -\blambda^h\|_{\mathbf{L}^2(\Gamma)} )
 \le
|||\bu-\bu^{h},p-p^{h},\blambda-\blambda ^{h}|||
\\
&&\qquad \le C(
h^{k_u}\|\bu\|_{\mathbf{H}^{k_u+1}(\mathcal{F})} + h^{k_p+1}\|p\|_{\H^{k_p+1}(\mathcal{F})} +
h^{k_\lambda+1}\|\blambda\|_{\mathbf{H}^{k_{\lambda}+1/2}(\Gamma)}
),
\notag
\end{eqnarray*}
where $k_u$, $k_p$ and $k_\lambda$ are the degrees of finite elements used for velocity, pressure and multiplier $\blambda$ respectively.
The proof of this result is rather tedious but can be easily reproduced following the ideas of \cite{HaslR} (see, in particular, the proofs of Theorem 5.3 and Lemma 5.4 there).


\section{Numerical experiments} \label{secnum}

For numerical experiments, we consider the square $[0 , 1]\times [0 , 1]$ and choose as $\Gamma$ the circle whose level-set representation is
\begin{eqnarray*}
(x-0.5)^2 + (y-0.5)^2 = R^2,
\end{eqnarray*}
with $R=0.21$ (see figure \ref{meshimage2}).
The exact solutions are chosen equal to
\begin{eqnarray*}
\bu_{ex}(x,y) & = & \left( \begin{array} {ll}
\cos(\pi x)\sin(\pi y) \\
-\sin(\pi x)\cos(\pi y)
\end{array} \right), \\
p_{ex}(x,y) & = & (y - 1/2) \cos(2\pi x) + (x-1/2)\sin(2\pi y).
\end{eqnarray*}

The meshes and all the computations have been obtained with the C++ finite element library  {\sc Getfem++} \cite{Getfem}. In the numerical tests, we compare the discrete solutions with the exact solutions for different meshes (six imbricated uniform meshes).\\
We denote $\boldsymbol{U}_{ex}$, $\boldsymbol{P}_{ex}$ and $\boldsymbol{\Lambda}_{ex}$ the discrete forms of functions $\bu_{ex}$, $p_{ex}$ and $\blambda_{ex} = \sigma(\bu_{ex}, p_{ex})\bn$ respectively. For practical purposes, the error introduced by the approximation of the exact vector $\boldsymbol{\Lambda}_{ex}$ by $\boldsymbol{\Lambda}$ is given by the square root of
\begin{eqnarray*}
\| \boldsymbol{\Lambda}_{ex} - \boldsymbol{\Lambda} \|_{\mathbf{L}^2(\Gamma)}^2 & = & \int_{\Gamma} \left|\sigma(\boldsymbol{U}_{ex},\boldsymbol{P}_{ex})\bn - \boldsymbol{\Lambda} \right|^2\d \Gamma  .
\end{eqnarray*}
This scalar product is developed and using the assembling matrices we compute
\begin{eqnarray*}
\| \boldsymbol{\Lambda}_{ex} - \boldsymbol{\Lambda} \|_{\mathbf{L}^2(\Gamma)}^2 & = & \langle A_{\bu\bu} \boldsymbol{U}_{ex}, \boldsymbol{U}_{ex}\rangle + 2\langle A_{\bu p} \boldsymbol{P}_{ex}, \boldsymbol{U}_{ex}\rangle + 2\langle A_{\bu \blambda}\boldsymbol{\Lambda} , \boldsymbol{U}_{ex}\rangle + \\
& & \langle A_{pp} \boldsymbol{P}_{ex}, \boldsymbol{P}_{ex} \rangle - 2\langle A_{p \blambda} \boldsymbol{\Lambda} , \boldsymbol{P}_{ex} \rangle + \langle A_{\blambda \blambda} \boldsymbol{\Lambda}, \boldsymbol{\Lambda} \rangle,
\end{eqnarray*}
where $\langle \cdot , \cdot \rangle$ denotes the classical Euclidean scalar product in finite dimension. Then, the relative error is given by
\begin{eqnarray*}
\frac{\| \boldsymbol{\Lambda}_{ex} - \boldsymbol{\Lambda} \|_{\mathbf{L}^2(\Gamma)}}{\| \boldsymbol{\Lambda}_{ex} \|_{\mathbf{L}^2(\Gamma)}} & = &
\frac{\| \boldsymbol{\Lambda}_{ex} - \boldsymbol{\Lambda} \|_{\mathbf{L}^2(\Gamma)}}{
\left( \langle A_{\bu\bu} \boldsymbol{U}_{ex}, \boldsymbol{U}_{ex}\rangle + \langle A_{pp} \boldsymbol{P}_{ex}, \boldsymbol{P}_{ex}\rangle+ 2\langle A_{\bu p}\boldsymbol{P}_{ex}, \boldsymbol{U}_{ex}\rangle \right)^{1/2}}.
\end{eqnarray*}

\subsection{Numerical experiments for the method without stabilization} \label{secnumstab0}
We present numerical computations of errors when no stabilization are imposed.
We consider several choices of the finite element spaces $\tilde{\mathbf{V}}^h$, $\tilde{Q}^h$ and $\tilde{\mathbf{W}}^h$.
Four couples of spaces are studied (for ${\bf u}$/$p$/${\blambda}$), P1+/P1/P0 (a standard continuous P1 element for ${\bf u}$ enriched by a cubic bubble function, standard continuous P1 for the pressure $p$ and discontinuous P0 for the multiplier ${\blambda}$ element on a triangle), P2/P1/P0, for triangular meshes and Q1/Q0/Q0, Q2/Q1/Q0 for quadrangular meshes.
The elements chosen between velocity and pressure are the ones which ensure the discrete mesh-independent inf-sup condition $\mathbf{H1}$ in the case of uncut functions (except for the Q1/Q0 pair), that is to say the classical case where regular meshes are considered.
Low degrees are selected to control the memory (CPU time) which plays a crucial role in numerical simulations for fluid-structure
interactions, specially in an unsteady framework. For the multiplier introduced for the interface, since the stabilization is not used,
a discrete mesh-independent inf-sup condition must be satisfied. For instance, the couple of spaces Q1/Q0/Q0 does not satisfy this condition.
The error curves between the discrete solution and the exact one are given in figure \ref{withoutStabCv}
for different norms. The rates of convergence are reported.\\
\begin{minipage}{21cm}
\hspace*{-2cm} \includegraphics[trim = 0cm 0cm 0cm 5cm, clip, scale=0.4]{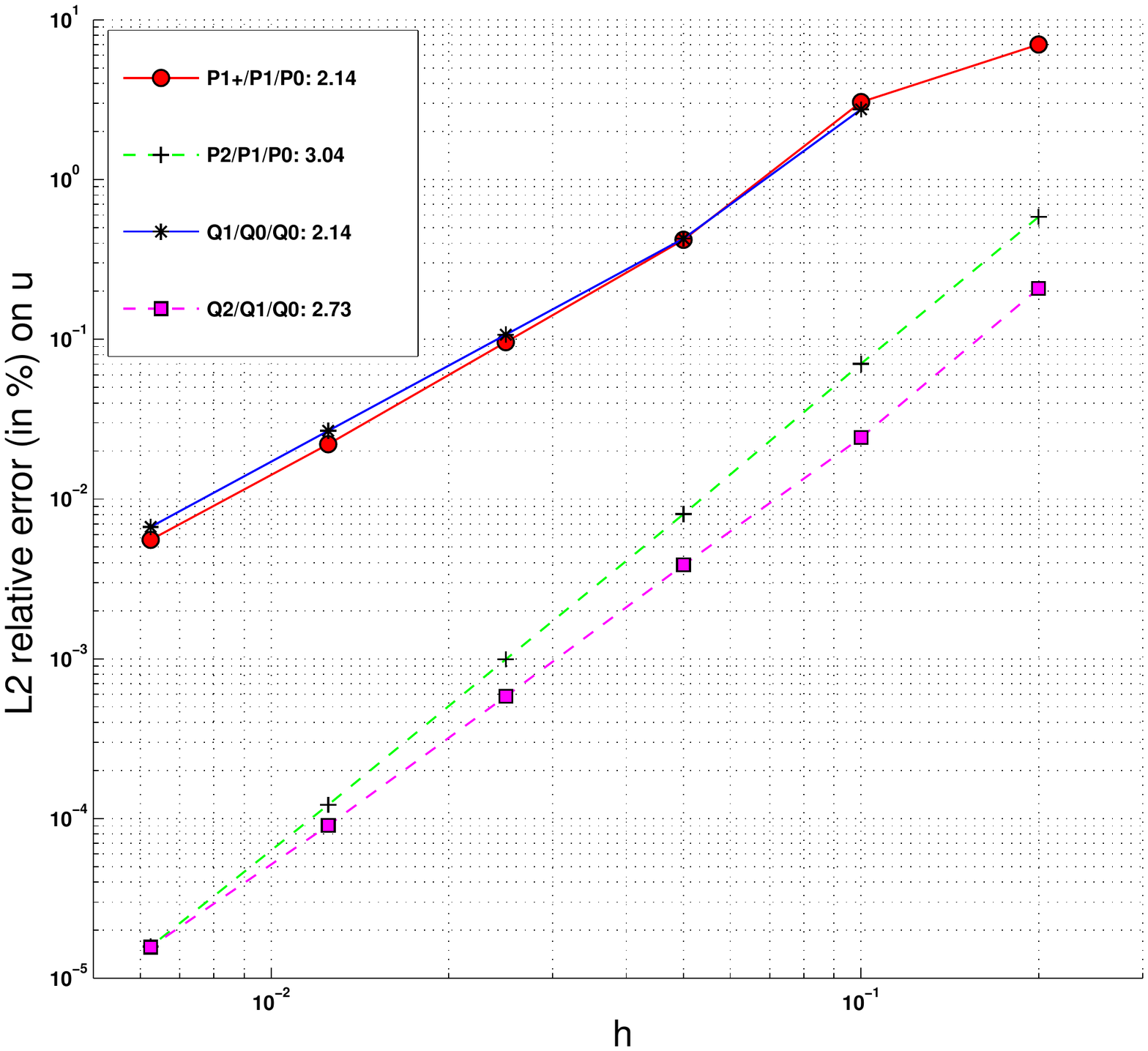} 
\includegraphics[trim = 0cm 0cm 0cm 5cm, clip, scale = 0.4]{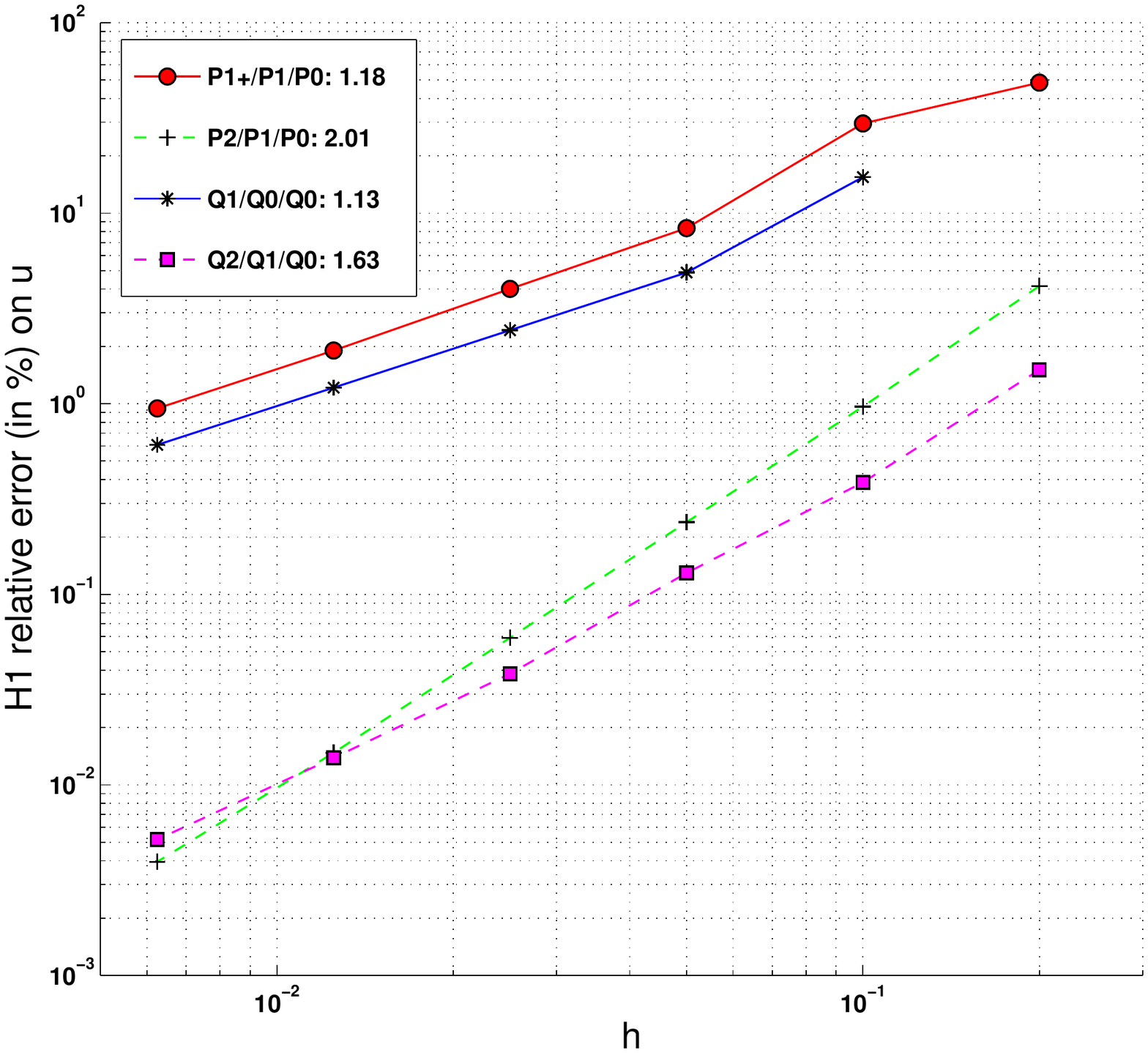}\\
\hspace*{-2cm}\includegraphics[scale = 0.4]{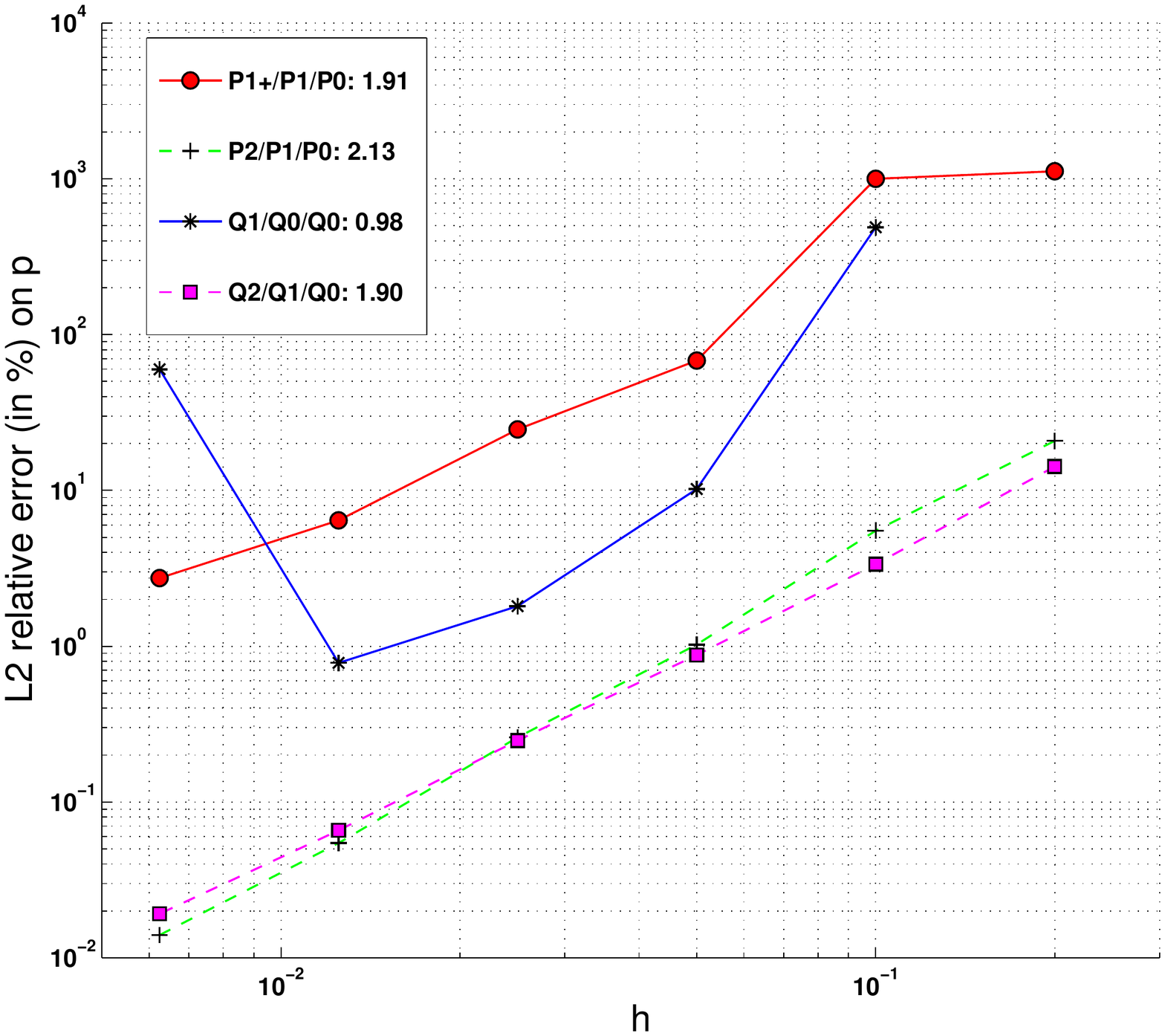} \includegraphics[scale = 0.4]{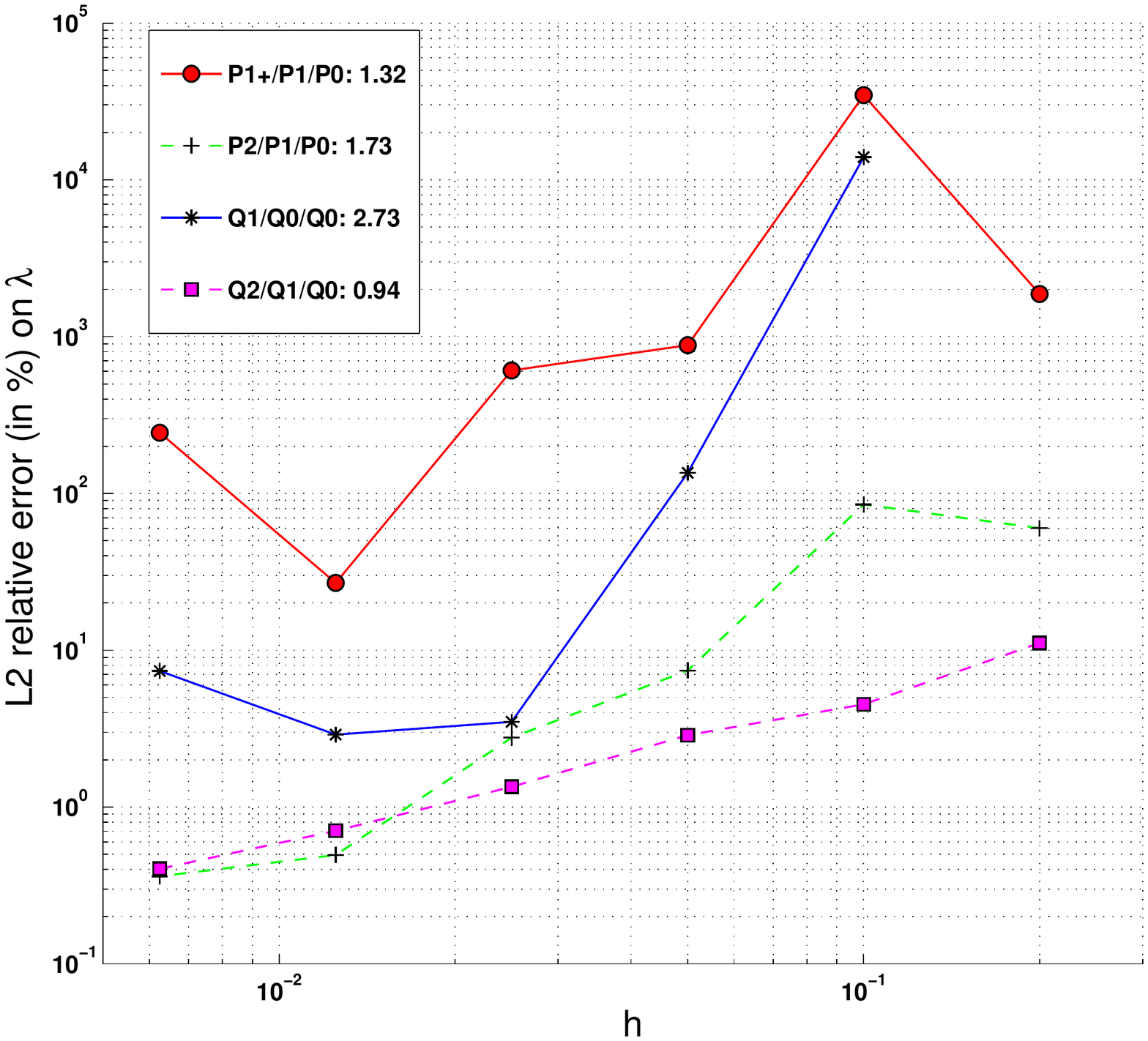}\\
\end{minipage}

\vspace*{-1cm}
\begin{figure}[!h]
\centering\caption{Rates of convergence without stabilization for the velocity/pressure/Lagrange multiplier,\\ for different triplets of finite element spaces.
}
\label{withoutStabCv}
\end{figure}

\FloatBarrier

The convergence for the fluid velocity is highlighted, whereas the convergence for the multiplier seems to not occur, in all cases.
We get the convergence for the pressure, but not for the test Q1/Q0/Q0 which does anyway not satisfy the inf-sup condition.
The rates of convergence are better than what we can expect by the theory for $\bu$ and $p$. The results are not so good for
the multiplier. Indeed, without stabilization, the the order of magnitude for the relative errors lets us think that
the multiplier is not well computed.

\subsection{Numerical experiments with stabilization} \label{secnumstab} \label{secgamma}
In this part, we consider the method with stabilization terms. Additional terms depending on the positive constant $\gamma$ are considered in the variational formulation \eqref{FVaugmented}. In the following, we fix $\gamma = h \gamma_0$, as it is suggested in the proof of Lemma \ref{lemmainfsup} ($\gamma$ is supposed to be constant, which is natural when uniform meshes are considered). The parameter $\gamma$ (or $\gamma_0$) has to respond to a compromise between the coercivity of the system and the weight of the stabilization term. First, the choice of  $\gamma$ is discussed.
We choose the P2/P1/P0 couple of spaces with the space step $h = 0.025$. To characterize a good range of values, we present the condition number (of the whole system) in figure \ref{cond_gamma}, and the relative errors on the multiplier $\blambda$ for $\gamma_0 \in [10^{-14} ; 10^{4}]$ and more precisely for $\gamma_0 \in [0.001 ; 0.200]$ in figure \ref{err_mult_gamma}.

\begin{figure}[!h]
\centering\includegraphics[trim = 0cm 2.5cm 0cm 5cm, clip, scale = 0.30]{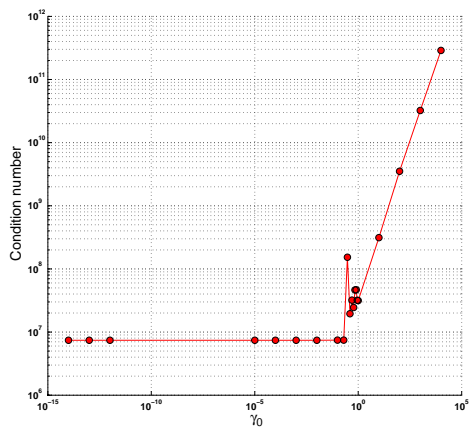}
\vspace*{-0cm}
\centering\caption{The condition number for $\gamma_0 \in [10^{-14} ; 10^{4}]$.}
\label{cond_gamma}
\end{figure}
\FloatBarrier

\vspace*{-0.5cm}
\begin{center}
\includegraphics[trim = 0cm 2.0cm 0cm 5.0cm, clip, scale = 0.30]{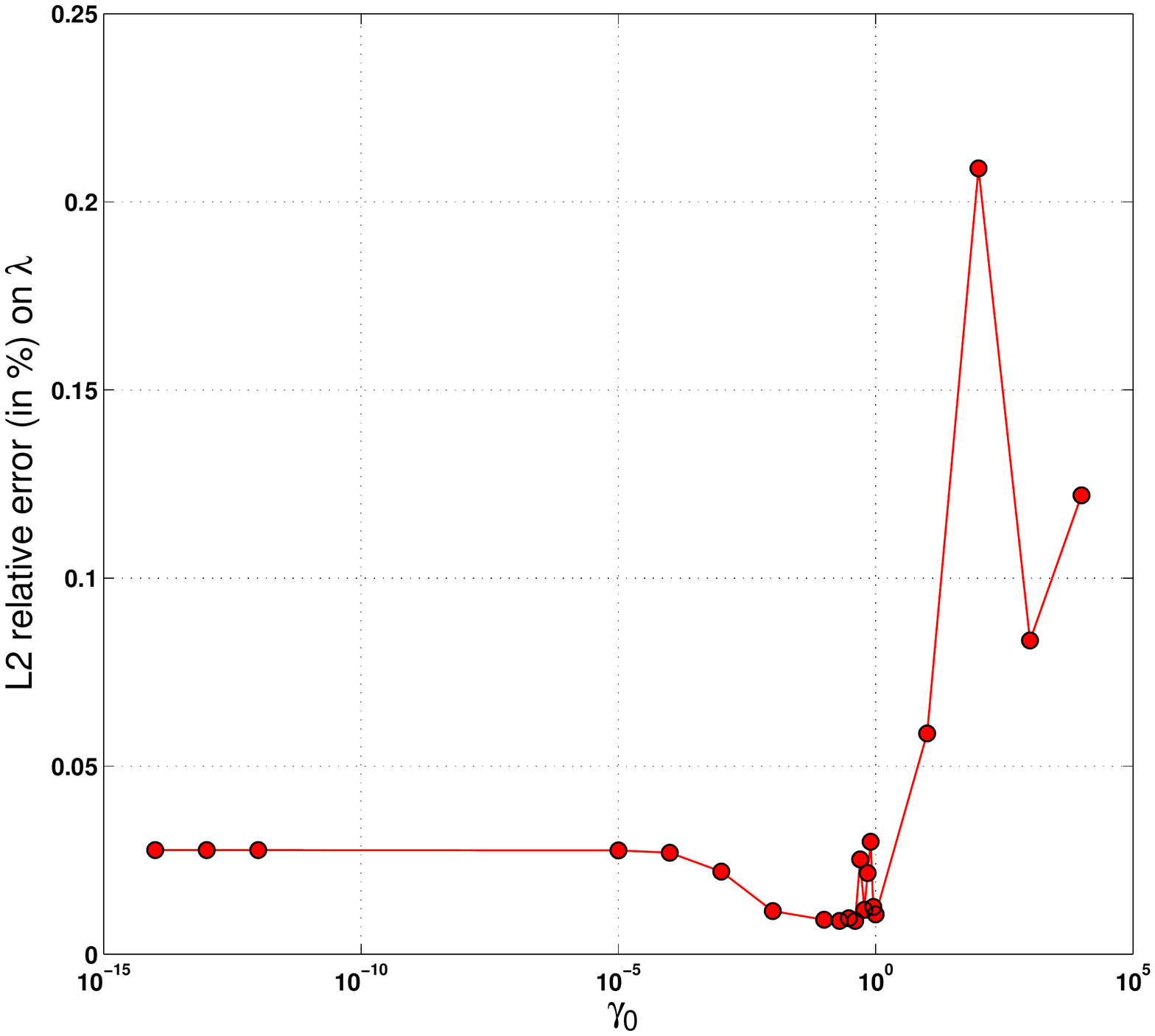}
\includegraphics[trim = 0cm 2.0cm 0cm 5.0cm, clip, scale = 0.30]{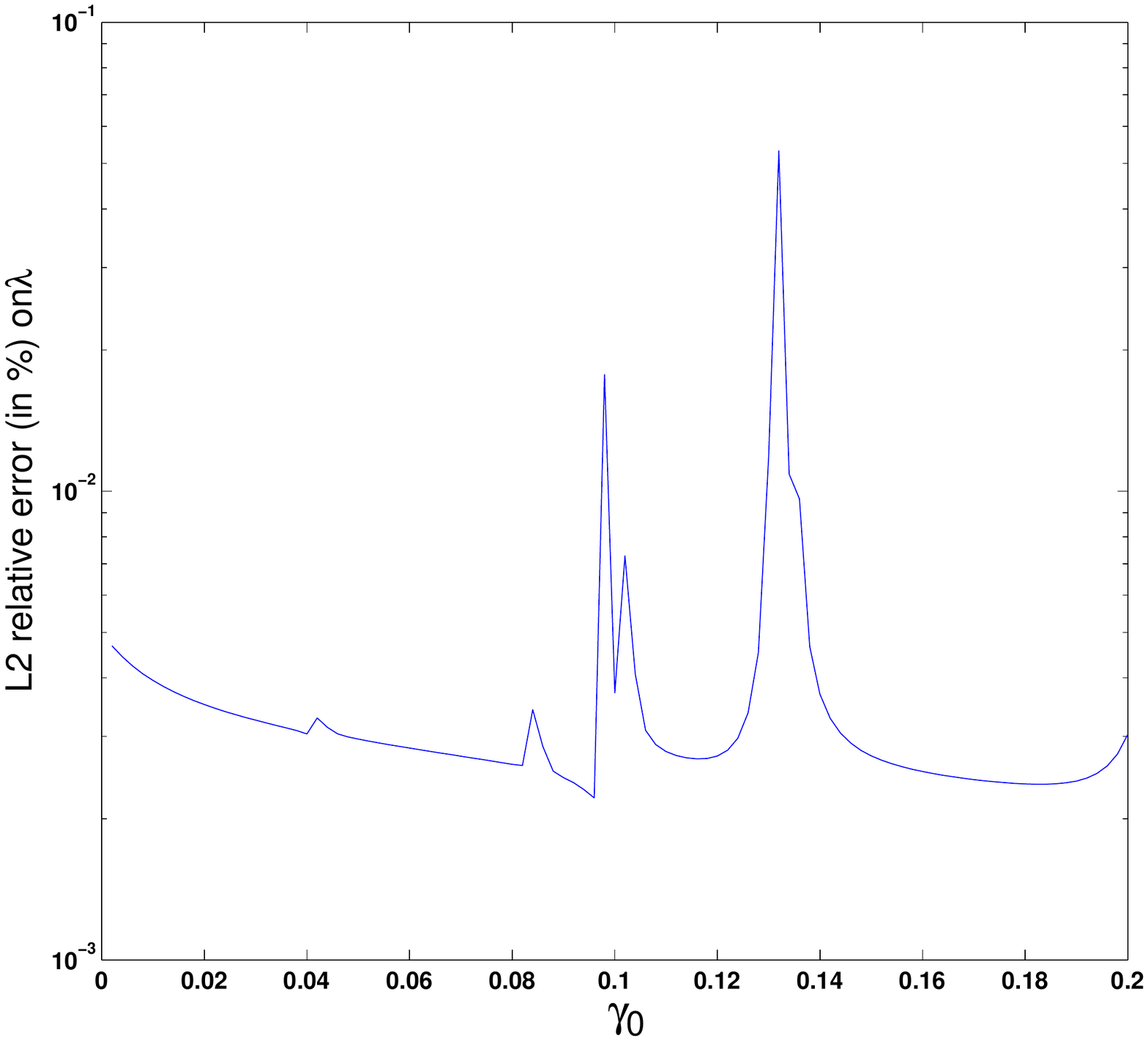}
\end{center}
\vspace*{-0.5cm}

\begin{figure}[!h]
\caption{The relative errors $\|\blambda-\blambda^h\|_{\mathbf{L}^2(\Gamma)}$  for $\gamma_0 \in [10^{-14} ; 10^{4}]$ (left),  $\gamma_0 \in [0.001 ; 0.200]$ (right).}
\label{err_mult_gamma}
\end{figure}

The condition number given for some very small $\gamma_0$ corresponds to the condition number of the system when no stabilization is
used. For all situations, the condition number is degraded when stabilization terms are considered
and can explode when $\gamma_0$ is too large. With regard to the errors on the multiplier $\blambda$, there is no improvement for the relative errors on the multiplier when $\gamma_0$ is too small. When $\gamma_0$ increases, the errors on the multiplier becomes interesting
even if some  peaks can appear (transition zone where the coercivity property is very poor).
Similar observations (same values for $\gamma_0$) are observed on the relative errors for the velocity.\\

With regard to the previous experiments, in the following, we choose $\gamma_0 = 0.05$ (so $\gamma = 0.05 \times h$) and we study
the numerical convergence analysis of the method when stabilization is used.
The following numerical experiments have been made in the same conditions as the one given in section \ref{seccut}. The results are reported in figure \ref{withStabCv}.


\begin{minipage}{21cm}
\hspace*{-2cm} \includegraphics[trim = 0cm 1cm 0cm 6.0cm, clip, scale=0.4]{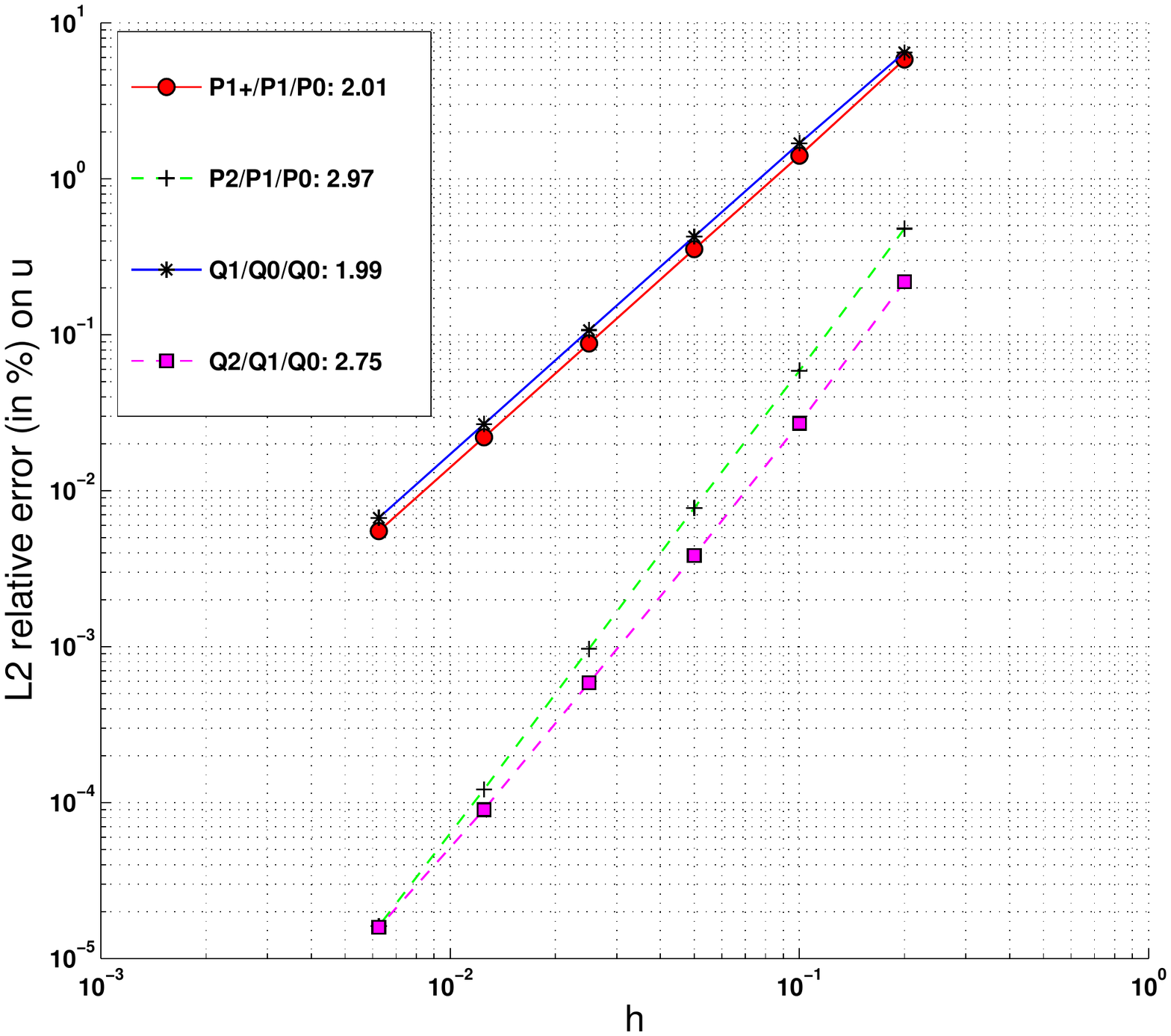}
\includegraphics[trim = 0cm 1cm 0cm 6.0cm, clip, scale = 0.4]{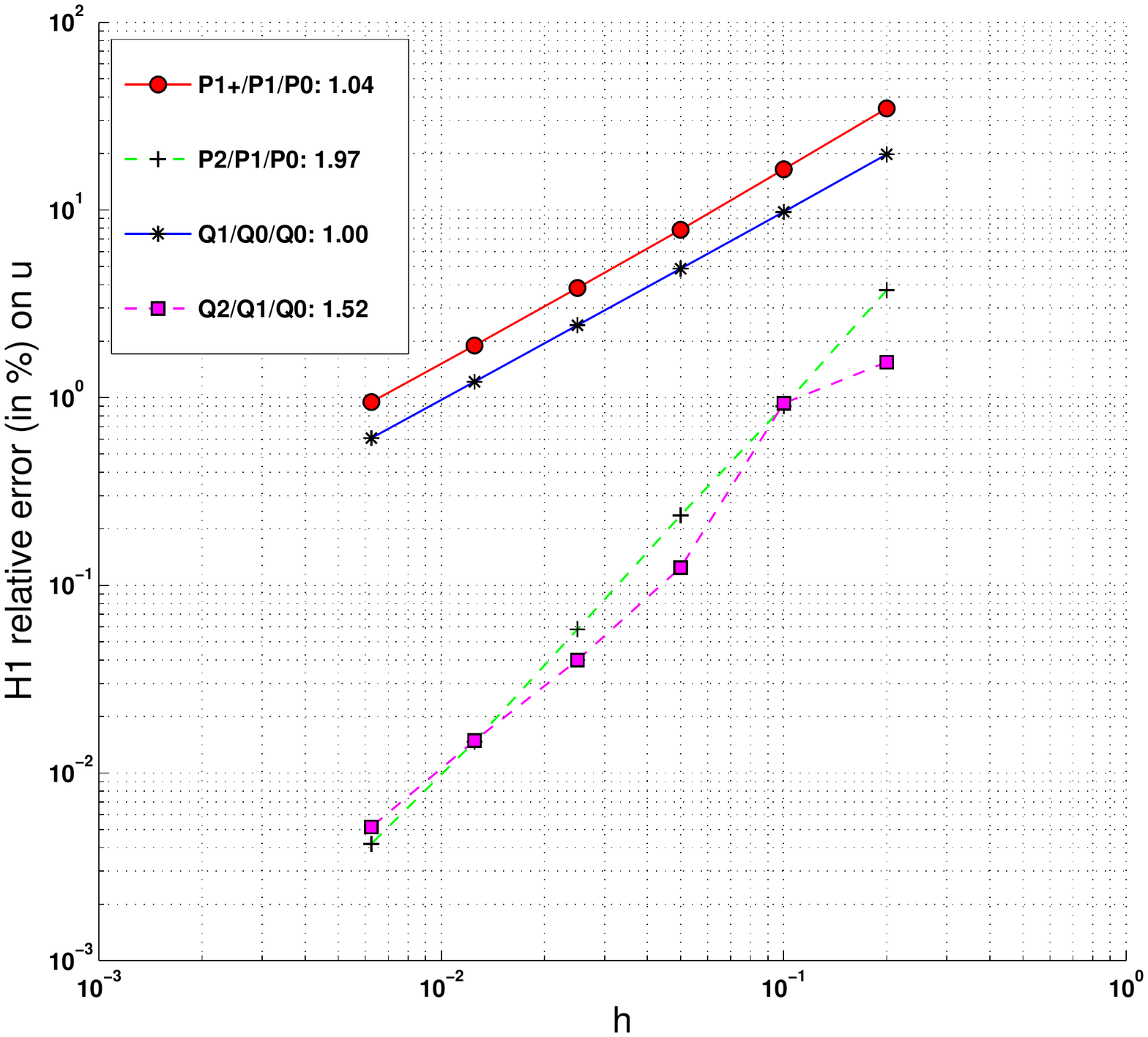}\\
\hspace*{-2cm}\includegraphics[trim = 0cm 0cm 0cm 0cm, clip, scale = 0.4]{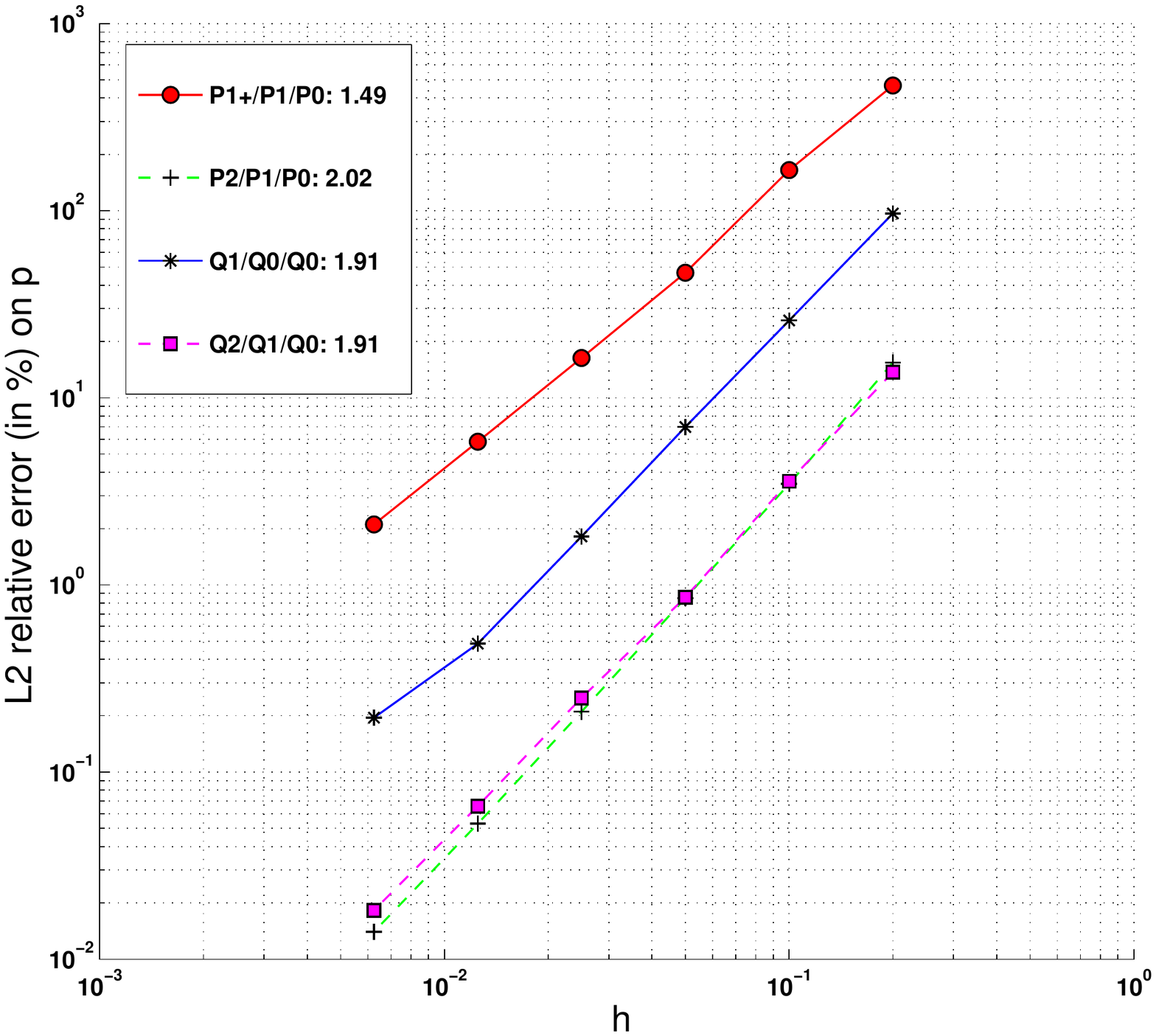} \includegraphics[scale = 0.4]{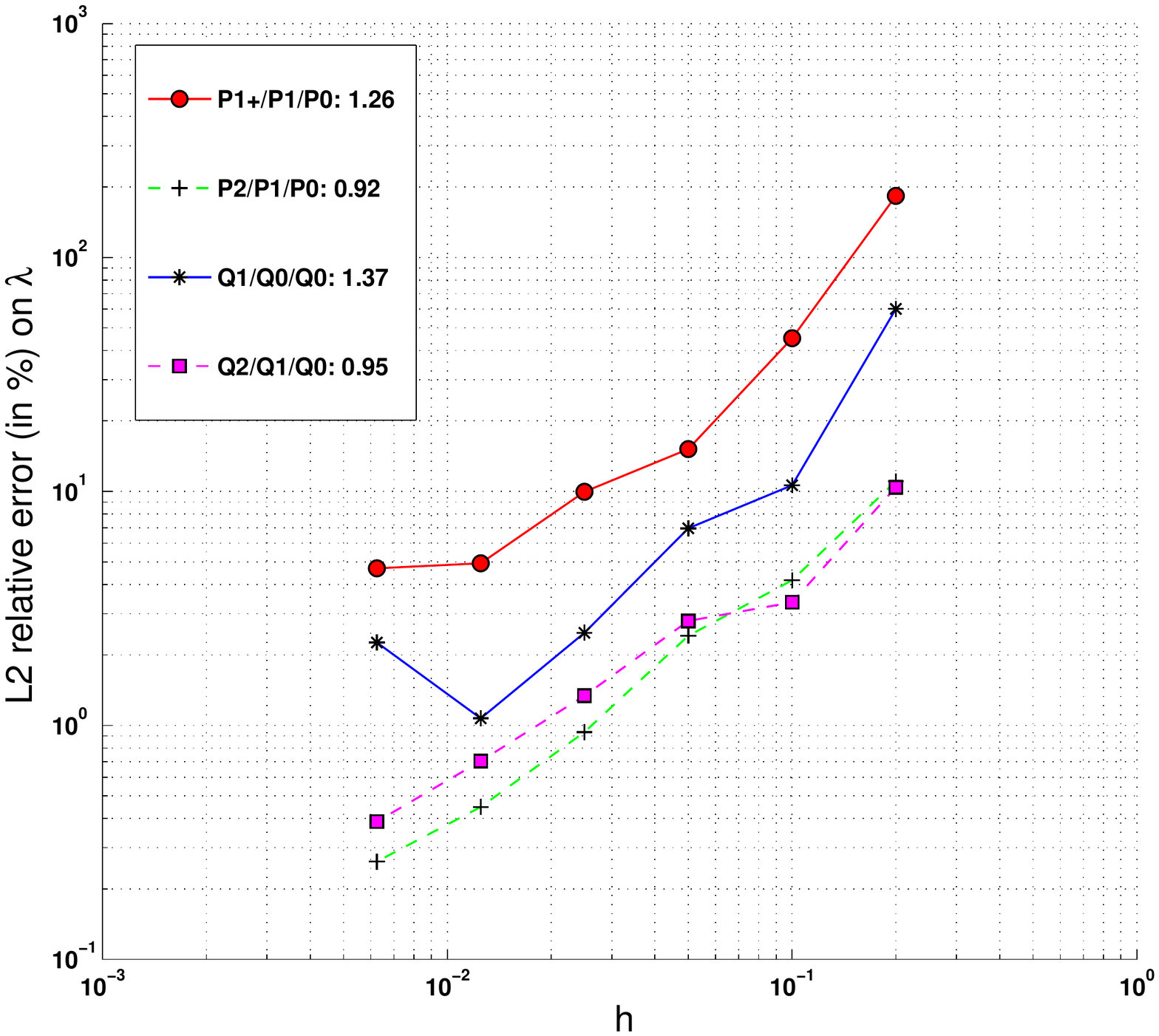}\\
\end{minipage}

\vspace*{-1.5cm}
\begin{figure}[!h]
\centering\caption{Rates of convergence with stabilization for the velocity/pressure/Lagrange multiplier,\\ for different triplets of finite element spaces.
}
\label{withStabCv}
\end{figure}

\FloatBarrier

\textcolor{black}{
We notice that we do not observe substantial differences on the rates of convergence for the errors on the fluid velocity.}
As regards to the pressure, a better behavior (compared to the first method without stabilization) is observed for the couple of spaces Q1/Q0/Q0 that do not satisfy the inf-sup condition. In all cases, the improvements appear for the multiplier. The method enables to recover the convergence for the multiplier.

\textcolor{black}{
\subsection{Tests for different geometric configurations}
In a framework where the solid moves in the fluid domain, we need to perform computations for different geometric configurations, in order to underline the interest of the stabilization method when different types of intersection between the level-set and the regular mesh can be achieved. For that, we compute the $\mathbf{L}^2(\Gamma)$ relative errors on the multiplier $\blambda$ for different positions of the center of the solid, with or without the stabilization technique. The perspective is to anticipate the behavior of the method in an unsteady case, and these tests enables us to avoid the complexity of a full unsteady problem.\\
For $h = 0.05$ and the finite elements triplet P2/P1/P0, we consider the solid as a circle, and we make the abscissa of the center of the circle - denoted by $x_C$ - vary between 0.5 and 0.7 (with a step equal to 0.0005). The variations of the relative error (in \%) on $\blambda$ are represented in blue (without stabilization) and in red (with stabilization).}

\begin{minipage}{21cm}
\hspace*{-0cm}\includegraphics[trim = 0cm 7cm 0cm 7cm, clip, scale=0.6]{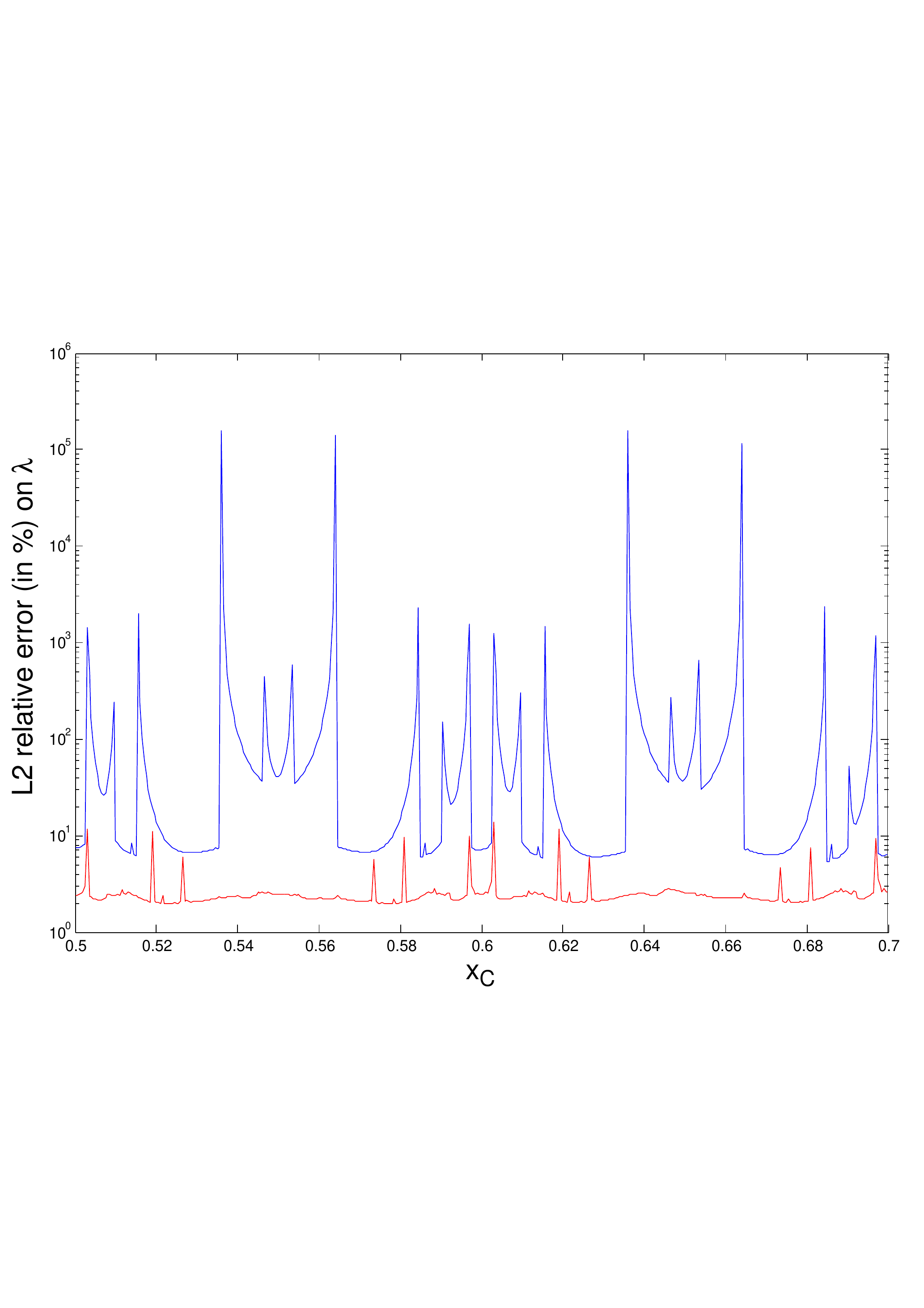}
\end{minipage}

\vspace*{-0.5cm}

\textcolor{black}{
\begin{figure}[!h]
\centering\caption{Behavior of the $\mathbf{L}^2(\Gamma)$ relative error on $\blambda$ (in semi-log scale), in red with the stabilization technique (with $\gamma_0 = 0.05$), in blue without.}
\label{moverror}
\end{figure}
}
\FloatBarrier

\textcolor{black}{In these tests the relevance of our approach using the stabilization technique is highlighted when the intersection between the level-set and the mesh varies. Without stabilization the errors are huge in many cases (see the curve in blue), whereas the robustness of the stabilization technique is demonstrated with regards to the constancy of the relative errors (see the curve in red).
}

\textcolor{black}{
\subsection{Comparison with a boundary-fitted mesh}
For three different values of $h$ and by using the elements P2/P1/P0, we compute the different relative errors (in \%) by using our method (with and without the stabilization technique) and by using a classical code which uses a standard mesh which fits closely the boundaries instead of being cut by the boundary of the solid. The results are given in Tables 1, 2, 3.}\\

\begin{minipage}{21cm}
\hspace*{-1cm}\begin{tabular} {lll}
\includegraphics[trim = 4.5cm 16cm 4.5cm 2.5cm, clip, scale=0.4]{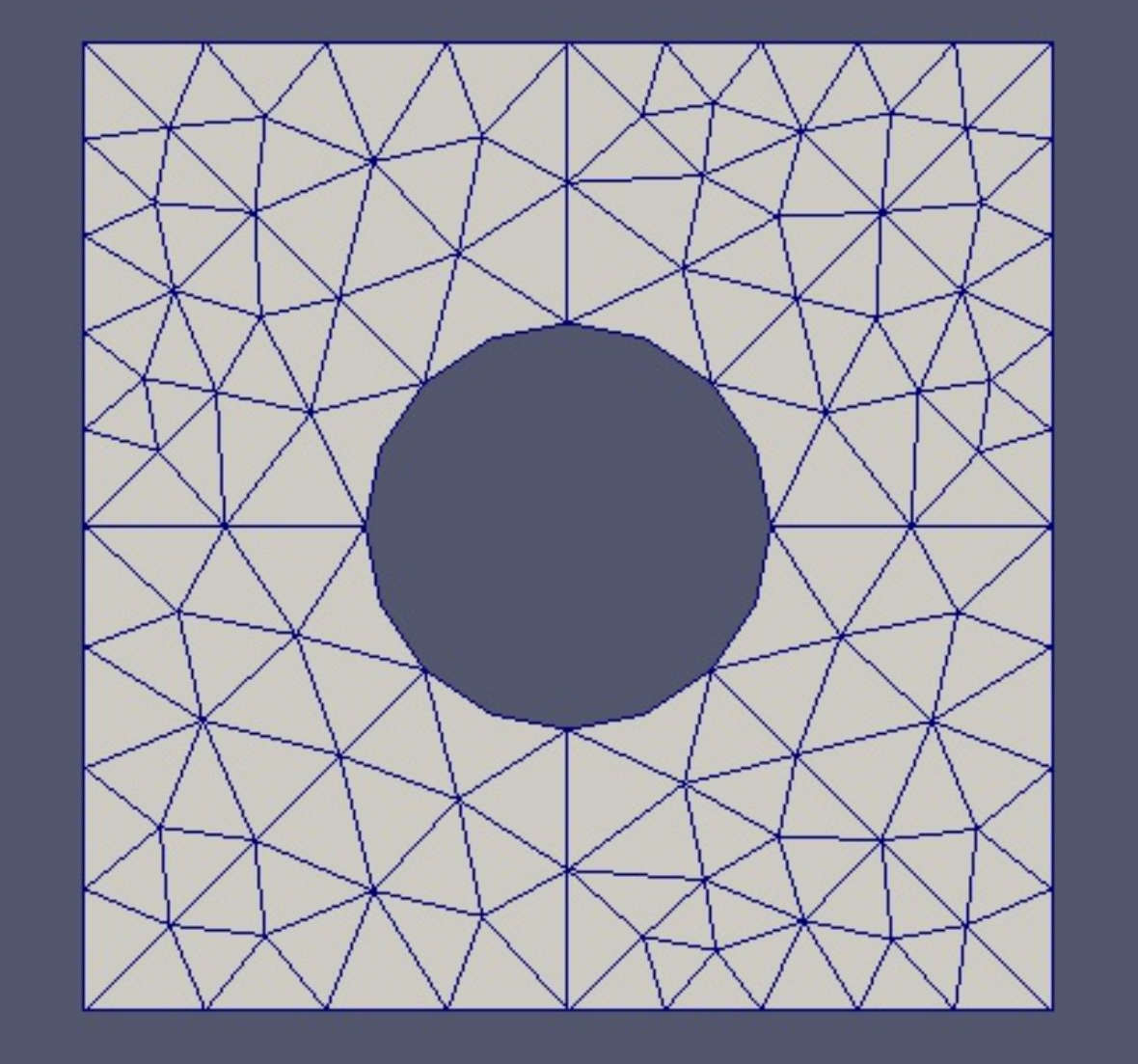} &
\includegraphics[trim = 4.5cm 16cm 4.5cm 2.5cm, clip, scale=0.4]{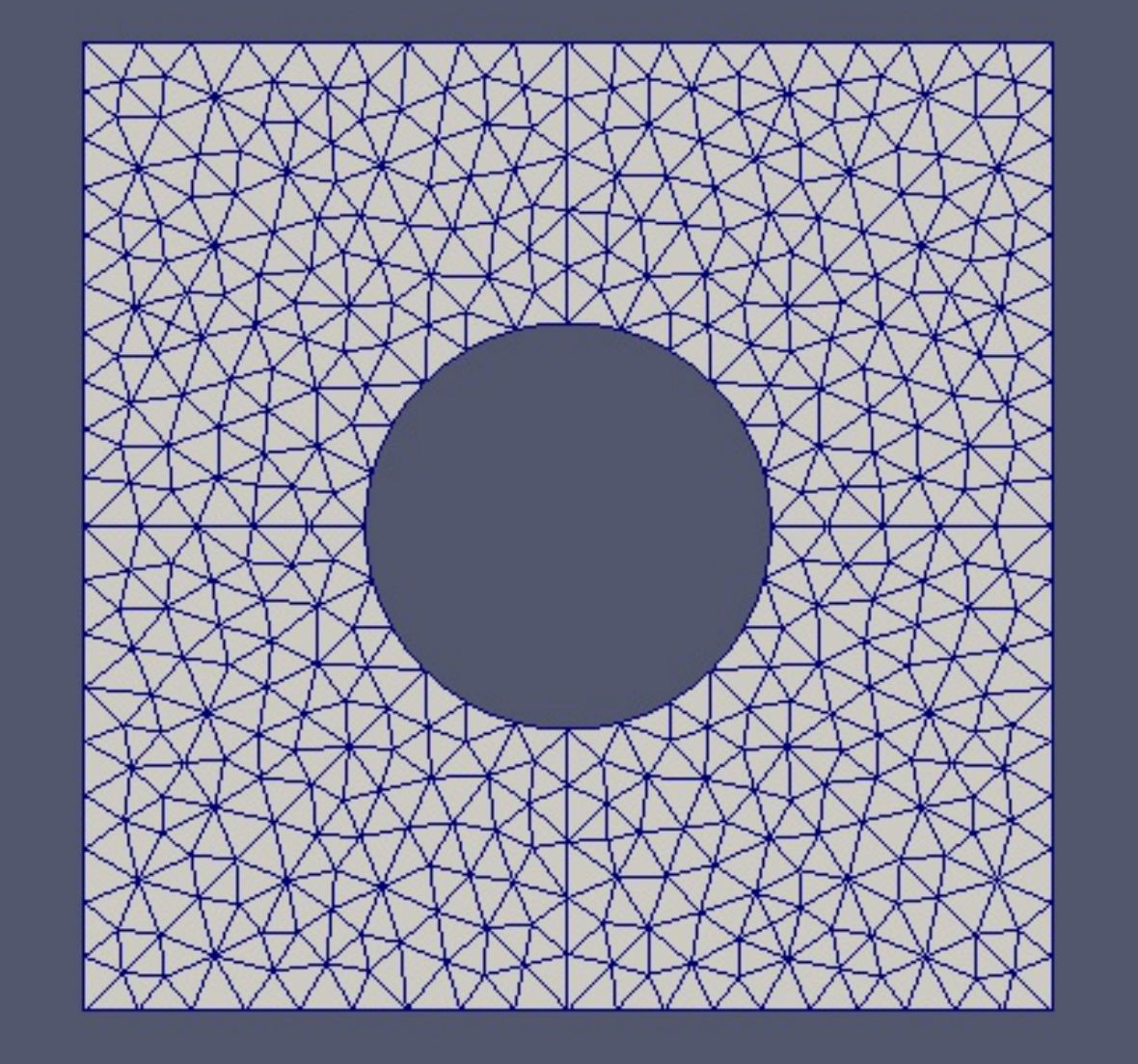} &
\includegraphics[trim = 4.5cm 16cm 4.5cm 2.5cm, clip, scale=0.4]{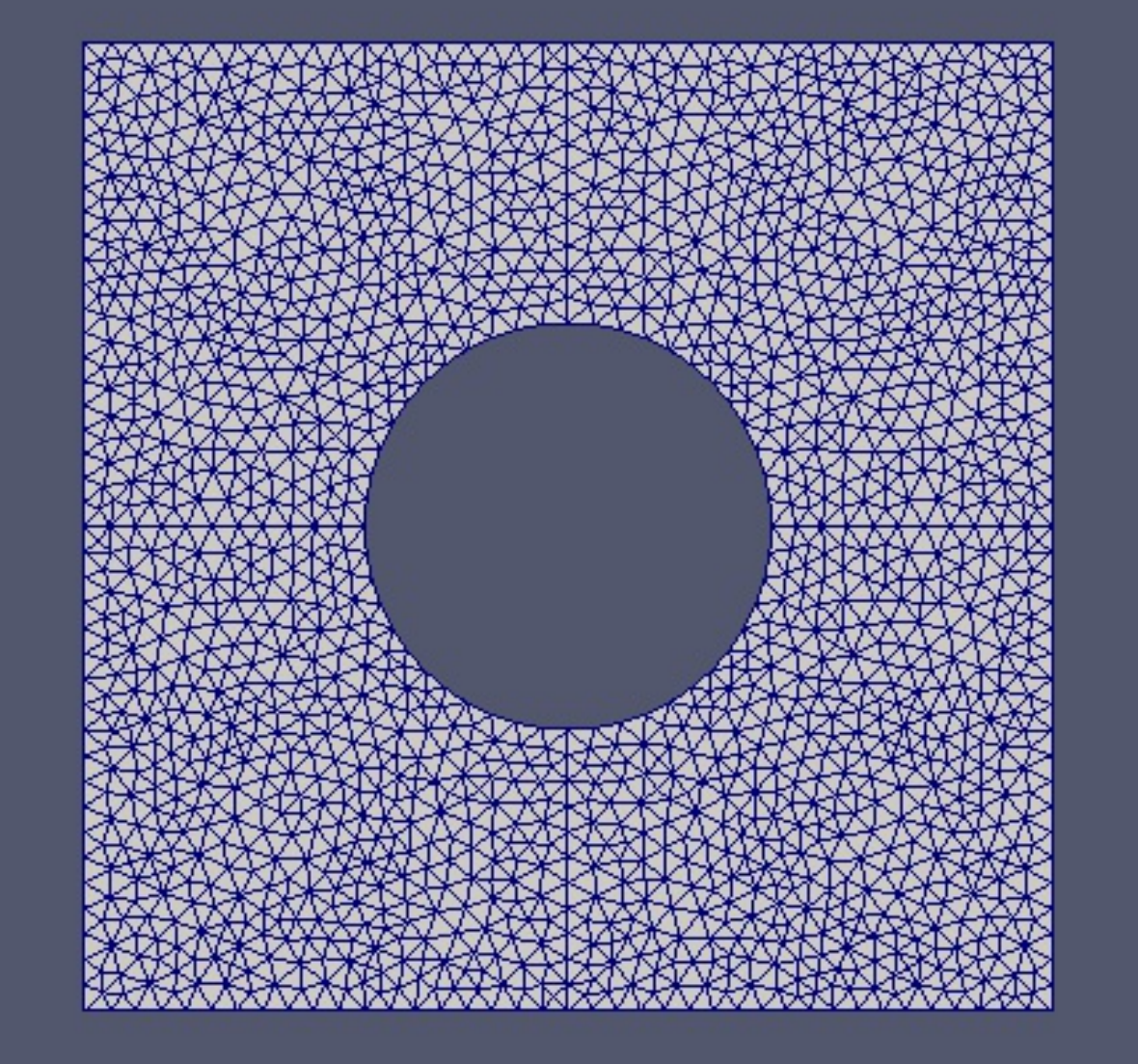}
\end{tabular}
\end{minipage}

\textcolor{black}{
\begin{figure}[!h]
\centering\caption{Different nonuniform boundary-fitted meshes, for which the triangles are not cut.}
\end{figure}
}
\FloatBarrier

\vspace*{0.5cm}

\begin{tabular} {|c|c|c|c|c|}
\hline
$h$ & $\mathbf{L}^2$ error on $\bu$ & $\mathbf{H}^1$ error on $\bu$ & $\L^2$ error on $p$ & $\mathbf{L}^2$ error on $\blambda$ \\
\hline
0.0358201 & 0.146643 & 1.56629 & 3.86771 & 9.61603\\
0.0152703 & 0.00371624 & 0.117115 & 0.751358 & 3.67841\\
0.0066282 &  0.00035697 & 0.0227257 & 0.187311 & 1.85277\\
\hline
\end{tabular}
\begin{center}
\textcolor{black}{Table 1. Errors for a standard uncut mesh.}
\end{center}
\vspace*{0.5cm}

\begin{tabular} {|c|c|c|c|c|}
\hline
$h$ & $\mathbf{L}^2$ error on $\bu$ & $\mathbf{H}^1$ error on $\bu$ & $\L^2$ error on $p$ & $\mathbf{L}^2$ error on $\blambda$ \\
\hline
0.036418 & 0.0353448 & 0.649583 & 2.59781 & 6.76061\\
0.0150695 & 0.00274948 & 0.123396 & 0.662703 & 13.9277\\
0.00662145 & 0.00024883 & 0.0276422 & 0.119263 & 1.57377\\
\hline
\end{tabular}
\begin{center}
\textcolor{black}{Table 2. Errors for a regular cut mesh, without stabilization.}
\end{center}
\vspace*{0.5cm}

\begin{tabular} {|c|c|c|c|c|}
\hline
$h$ & $\mathbf{L}^2$ error on $\bu$ & $\mathbf{H}^1$ error on $\bu$ & $\L^2$ error on $p$ & $\mathbf{L}^2$ error on $\blambda$ \\
\hline
0.036418 & 0.03485 & 0.644208 & 2.46321 & 6.61553\\
0.0150695 & 0.00282232 & 0.12423 & 0.556228 & 3.71191\\
0.00662145 & 0.000251731 & 0.0275953 & 0.104131 & 1.52906\\
\hline
\end{tabular}
\begin{center}
\textcolor{black}{Table 3. Errors for a regular cut mesh, with stabilization ($\gamma_0 = 0.05$).}
\end{center}

\vspace*{0.5cm}

\textcolor{black}{
The results obtained above show that our method enables us to get back the precision provided by a classical boundary-fitted mesh. With regards to the errors on the multiplier $\blambda$, notice that by using our method we need to perform the stabilization technique in order to recover a good approximation of this variable.
}

\textcolor{black}{
\subsection{Discussion of assumptions {\bf A1} and {\bf A2}}
In regard to the assumptions {\bf A1} and {\bf A2} considered for the proof of Lemma \ref{lemmainfsup}, let us also study the behavior of the constant $C$ of these assumptions with respect to the geometric configuration. In order to verify numerically {\bf A2} for instance, we want to solve the optimization problem%
\begin{eqnarray*}
\max_{q_{h}\in Q_{h}}\frac{h\Vert q^{h}\Vert _{\L ^{2}(\Gamma )}^{2}}{\Vert
q^{h}\Vert _{\L ^{2}(\mathcal{F})}^{2}} & = & \max_{q_{h}\in Q_{h}}\frac{%
h(q^{h},q^{h})_{\L ^{2}(\Gamma )}}{(q^{h},q^{h})_{\L ^{2}(\mathcal{F})}}.
\end{eqnarray*}%
One easily shows that the maximum is achieved on the eigenvector $q_{i}^{h}$ of the problem%
\begin{eqnarray*}
h\langle q_{i}^{h},\chi^{h}\rangle_{\L ^{2}(\Gamma )} & = & \lambda _{i}\langle q_{i}^{h},\chi^{h}\rangle_{%
\L ^{2}(\mathcal{F})}\quad \forall \chi^{h}\in Q_{h}
\end{eqnarray*}%
corresponding to the maximal eigenvalue $\lambda _{i}=\lambda _{\max }$. In matrix terms this is rewritten as%
\begin{eqnarray*}
h A_{\L ^{2}(\Gamma )}q^h_{i} = \lambda _{i} A_{\L ^{2}(\mathcal{F})} q^h_{i} & \Longleftrightarrow &
h A_{\L ^{2}(\mathcal{F})}^{-1}A_{\L^{2}(\Gamma)}q^h_{i}=\lambda _{i}q^h_{i},
\end{eqnarray*}%
where $A_{\L ^{2}(\Gamma )}$ and $A_{\L ^{2}(\mathcal{F})}$ are the mass matrices associated with the scalar products in $\L ^{2}(\Gamma )$ and $\L^{2}(\mathcal{F})$ respectively (see below). Hence the optimal constant in {\bf A2} can be calculated as $\lambda _{\max }(hA_{\L ^{2}(\mathcal{F})}^{-1}A_{\L^{2}(\Gamma )})$. The same thing can be done for {\bf A1}. Thus we consider the two following quantities
\begin{eqnarray*}
C_{\bu}(h) = \lambda _{\max }(h A_{\mathbf{H}^1(\mathcal{F})}^{-1} A_{\mathbf{L}^2(\Gamma)}),
& \quad &
C_{p}(h) = \lambda _{\max }(h A_{\L ^{2}(\mathcal{F})}^{-1} A_{\L ^{2}(\Gamma )}), \label{capacity}
\end{eqnarray*}
where $A_{\mathbf{L}^2(\Gamma)}$, $A_{\mathbf{H}^1(\mathcal{F})}$, $A_{\L^2(\Gamma)}$, $A_{\L^2(\mathcal{F})}$ denote the matrices respectively defined by
\begin{eqnarray*}
\begin{array} {ll}
\left(A_{\mathbf{L}^2(\Gamma)}\right)_{ij}  = \int_{\Gamma}D(\bvarphi_i):D(\bvarphi_j)\d \Gamma,
 &
\left( A_{\mathbf{H}^1(\mathcal{F})}\right)_{ij} =
\int_{\mathcal{F}}\nabla \bvarphi_i: \nabla \bvarphi_j \d \mathcal{F}+\int_{\mathcal{F}} \bvarphi_i \cdot \bvarphi_j \d \mathcal{F},\\
\left( A_{\L^2(\Gamma)}\right)_{ij} = \int_{\Gamma} \chi_i \cdot \chi_j \d \Gamma,
&
\left(A_{\L^2(\mathcal{F})}\right)_{ij} = \int_{\mathcal{F}} \chi_i \cdot \chi_j \d \mathcal{F}.
\end{array}
\end{eqnarray*}
For the particular configuration corresponding to $x_C = 0.500$, let us analyze the behavior of $\max(C_{\bu}(h),C_{p}(h))$ when the space step $h$ varies.
}

\begin{minipage}{21cm}
\includegraphics[trim = 0cm 5cm 0cm 7cm, clip, scale = 0.55]{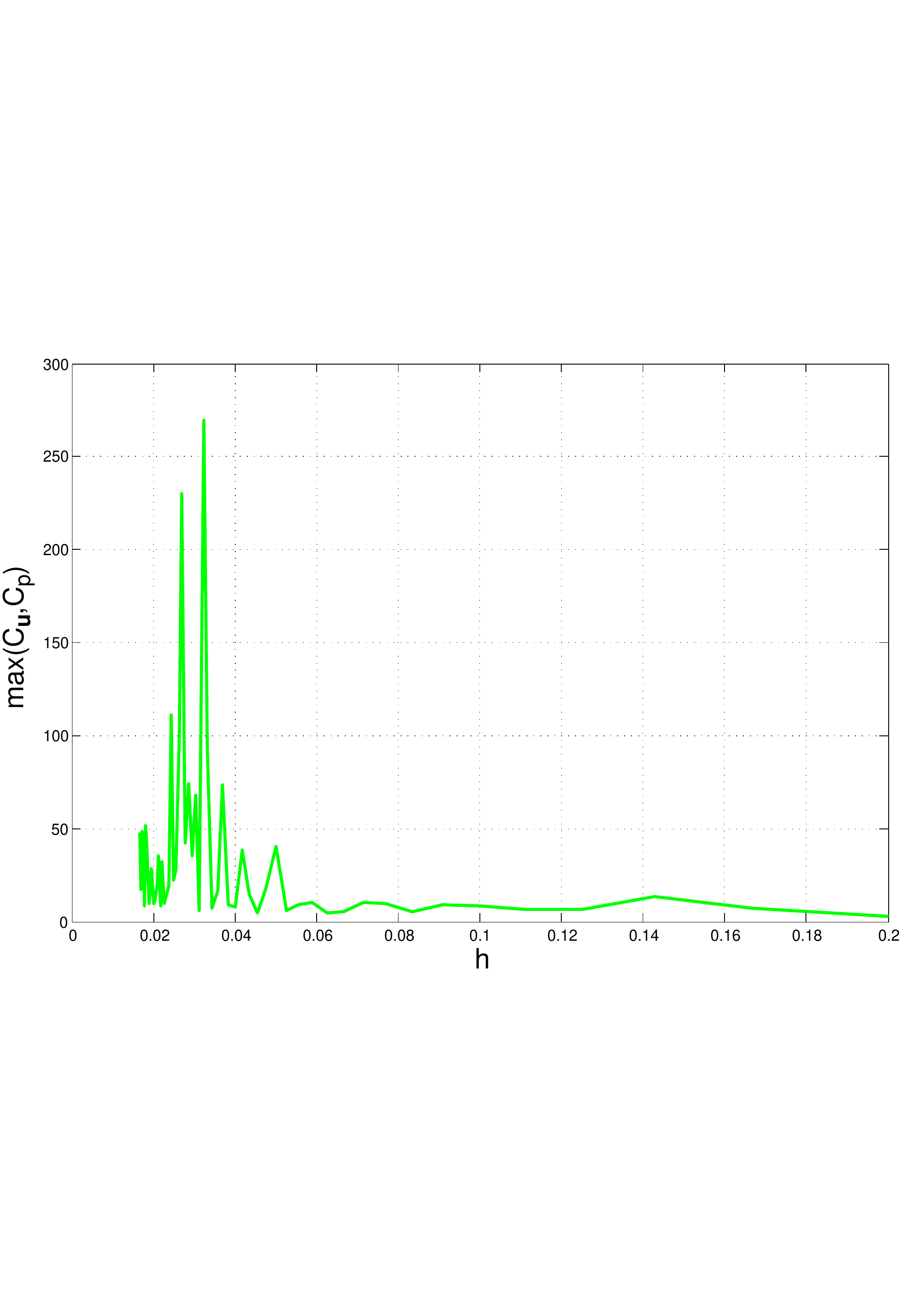}
\end{minipage}

\vspace*{-1.5cm}

\textcolor{black}{
\begin{figure}[!h]
\centering\caption{Numerical illustration of assumptions {\bf A1} and {\bf A2}: $\max(C_{\bu},C_{p})$ in function of $h$.}
\label{movpropre}
\end{figure}
}
\FloatBarrier

\textcolor{black}{
This graph lets us think that the quantities $C_{\bu}$ and $C_{p}$ are not constant with respect to $h$ (specially when $h$ becomes small), and thus the assumptions {\bf A1} and {\bf A2} are not satisfied in practice. However, concerning the value of $h$ for which they are not satisfied, we get numerically the convergence on the multiplier. At this stage we need to consider these assumptions only for proving the theoretical convergence of the stabilization technique (see Lemma \ref{lemmainfsup}).
}
\FloatBarrier


\section{Some practical remarks on the numerical implementation} \label{seccomments}

The numerical implementation of the method for Stokes problem is based on the code developed under {\sc Getfem++} Library \cite{Getfem}
for Poisson problem. The system is solved using the library SuperLU \cite{SuperLU}. The advantages of using the {\sc Getfem++} library (besides its simplicity of developing finite element codes) is that several specific difficulties have been already resolved.
Notably,
\begin{itemize}
\item[--] to define basis functions of ${\mathbf{W}}^h$  from traces on $\Gamma$ of the basis functions of $\tilde{{\mathbf{W}}^h}$.
Indeed, their independence is not ensured and numerical manipulations must be done in order to eliminate possible redundant functions (and avoid to
manipulate singular systems),
\item[--]  to localize the interface between the fluid and the structure, a level-set function which is already implemented (as it is done in \cite{Stolarska} for instance),
\item[--]  to compute properly the integrals over elements at the interface (during assembling) external call to {\sc Qhull} Library \cite{qhull}
is realized (see figure \ref{qhull_fig}).
\end{itemize}

\begin{figure}[!h]
\centering\includegraphics[scale=0.4]{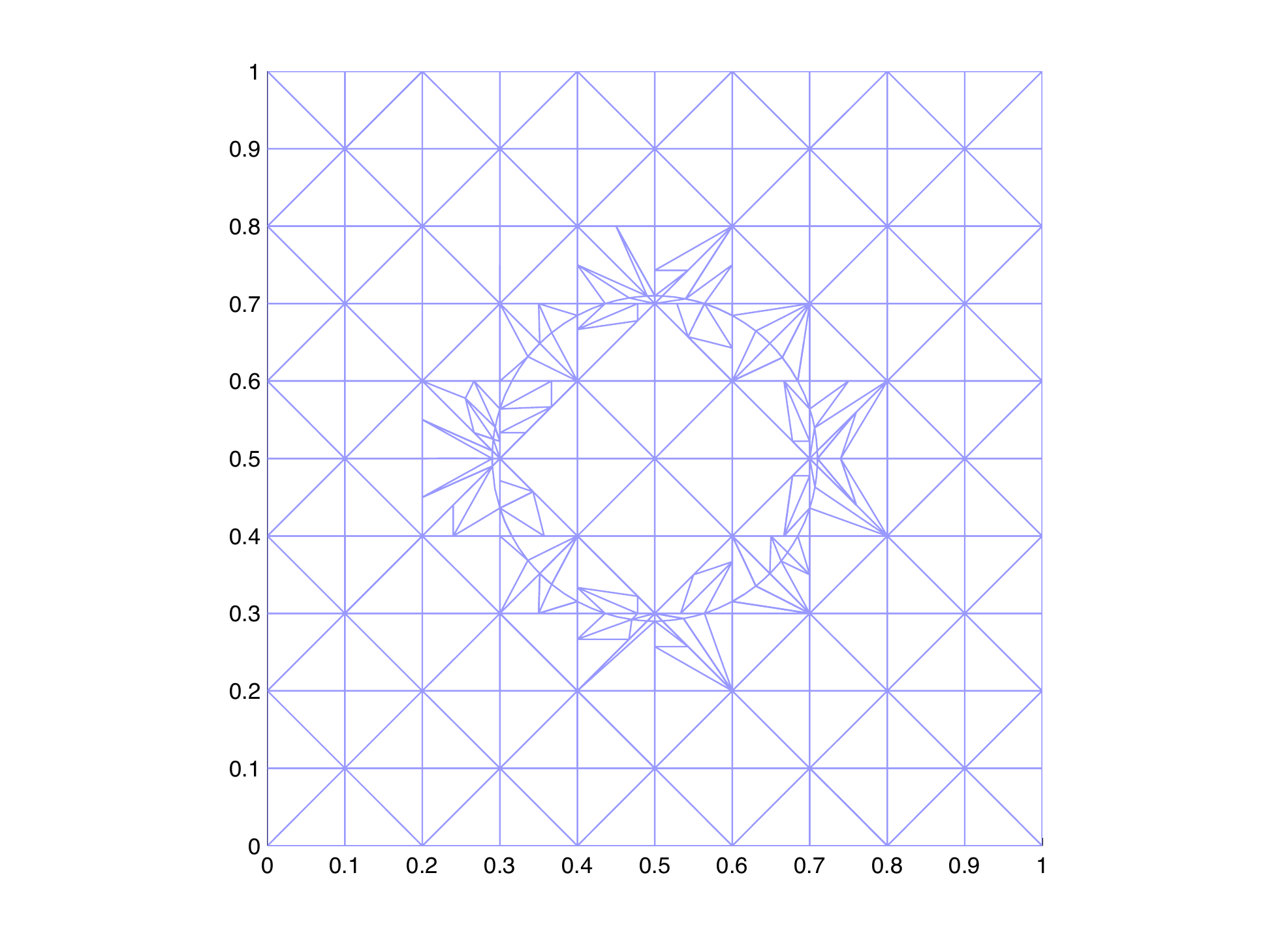}
\centering\caption{Local treatment at the interface using {\sc Qhull} Library.}
\label{qhull_fig}
\end{figure}


As mentioned in the paper \cite{HaslR}, it is possible to define a reinforced stability to prevent
difficulties that can occur when the intersection of the solid and the mesh over the whole domain introduce "very small"
elements. The technique is based on a strategy to select elements which are better to deduce the normal derivative  on $\Gamma$.
A similar approach is given in \cite{Pitkaranta}. This method has been tested for the Dirichlet problem in \cite{HaslR}, but it is not observed substantial improvements with this enriched stabilization, compared to the results obtained with the stabilization method detailed in this paper.
However, we expect to take benefits of this second stabilization method  when the boundary $\Gamma$ is led to move through the time,
in particular in unsteady framework and fluid-structure interactions.

\textcolor{black}{
\section{Application to a fluid-structure interaction problem} \label{secFSI}
The motivation of our approach lies in the perspective of simulations and control of a fluid-solid model for instance. Let us give a simple illustration of that.}

\subsection{Coupling with a moving rigid solid}
In this section, we consider a moving rigid solid which occupies a time-depending domain $\mathcal{S}(t)$. The displacement of a rigid solid is given by
\begin{eqnarray*}
X(\y,t) & = & \bh(t) + \mathbf{R}(t) \y, \quad \y \in \mathcal{S}(0), \\
\mathcal{S}(t) & = & \bh(t) + \mathbf{R}(t)\mathcal{S}(0),
\end{eqnarray*}
where $\bh(t)$ denotes the coordinates of the center of mass of the solid, and $\mathbf{R}(t)$ is the rotation which describes the orientation of the solid with respect to its reference configuration. In dimension 2, this orientation can be given by a single angle $\theta(t)$, and we have
\begin{eqnarray*}
\mathbf{R}(t) & = & \left( \begin{matrix}
\cos(\theta(t)) & -\sin(\theta(t)) \\ \sin(\theta(t)) & \cos(\theta(t))
\end{matrix} \right).
\end{eqnarray*}
In dimension 2, the angular velocity $\omega(t) = \theta'(t)$ is a scalar function.
The fluid domain is given by $\mathcal{O} \setminus \overline{\mathcal{S}(t)} = \mathcal{F}(t)$. The state of the corresponding full system is then defined by the fluid velocity and pressure, ${\bf u}$ and $p$, and the position of the solid given by the coordinates of its center of mass ${\bf h}(t)$ and its angular velocity $\omega(t)$. The coupling between the fluid and the structure is mainly made at the interface $\Gamma$, through the Dirichlet condition
\begin{eqnarray*}
\bu(x,t) & = & \bh'(t) + \omega(t)({\bf x}-\bh(t))^{\bot}, \quad {\bf x} \in \Gamma(t),
\end{eqnarray*}
and through two differential equations which link the position of the solid and the forces that the fluid exerts on its boundary, as follows
\begin{eqnarray}
M\bh''(t) & = & - \int_{\Gamma(t)}\sigma(\bu,p)\bn \d \Gamma - M\bg, \label{PFD} \\
I\omega'(t) & = & - \int_{\Gamma(t)}({\bf x}-\bh(t))^{\bot} \cdot \sigma(\bu,p)\bn \d \Gamma. \nonumber
\end{eqnarray}
\textcolor{black}{The vector $\bg$ denotes the gravity field}. Thus, obtaining a good approximation for $\sigma({\bf u},p){\bf n}$ is essential for simulating the trajectories of the solid.


\textcolor{black}{
\subsection{Illustration: Free fall of a ball}
The full model described above would necessitate particular attention to the time discretization. Indeed, for instance the value of the velocity that we would have to consider in the fluid region released by the solid between two time steps has to be discussed. Thus, instead of considering the full problem, let us consider a simplified approach where the time-dependence aspect is governed only by the position of the solid, and not by the time-derivative of the fluid velocity (which requires to tackle the difficulty aforementioned).\\
A simple illustration consists in simulating in 2D the fall of a rigid ball submitted to the gravity force at low Reynolds number. The state of the fluid
is then governed by the Stokes system we consider in this paper, and the time discretization is only about the dynamics of the solid. The radius of the ball is still $R = 0.21$, and its initial position given by the center of the ball
$C = [x_C,y_C] = [0.5,0.75]$.
By symmetry, if we assume that the initial velocities are null, then the displacement of the ball is only vertical. Thus we impose the Dirichlet condition in the fluid-solid interface as being only
\begin{eqnarray*}
\bu = \bh',
\end{eqnarray*}
and the function $\bh' = (0, \bh_2')^T$ satisfies \eqref{PFD} which is then reduced to the 1D differential equation
\begin{eqnarray}
M\bh_2''(t) & = & -\alpha[\bh(t)]_2\bh_2'(t) - 9.81M,      \label{eqdiffall}
\end{eqnarray}
where $\displaystyle \alpha[\bh(t)] = \int_{\Gamma(t)} \sigma(\hat{\bu},\hat{p})\bn\d \Gamma$, with $\Gamma(t) =  \left\{ \bh(t) + \y \mid \ \y \in \Gamma(0) \right\}$ (the subindex 2 is used for the second component of the vector), and $(\hat{\bu}, \hat{p})$ is the solution of
\begin{eqnarray*}
- \nu \Delta \hat{\bu} + \nabla \hat{p} & = & 0 \quad \text{in } \mathcal{F},  \\
\div \ \hat{\bu} & = & 0 \quad \text{in }\mathcal{F},  \\
\hat{\bu} & = & 0 \quad \text{on } \p \mathcal{O},  \\
\hat{\bu} & = & (0,1)^T \quad \text{on } \Gamma.
\end{eqnarray*}
Indeed, the functions $\bu$ and $p$ are linear we respect to $\bh'$. We discretize \eqref{eqdiffall} with a semi-implicit scheme, as follows
\begin{eqnarray*}
\frac{M}{\Delta t}\left({\bh'}^{n+1}_2 - {\bh'}^{n}_2 \right) & = & - \alpha({\bh}^{n})_2 {\bh'}^{n+1}_2 - 9.81M, \\
\frac{1}{\Delta t}\left({\bh}^{n+1}_2 - {\bh}^{n}_2 \right) & = & {\bh'}^{n+1}_2.
\end{eqnarray*}
For the simulation we choose $h = 0.0125$ for the space step, still $\gamma_0 = 0.05$ for the stabilization parameter, the finite elements triplet P2/P1/P0, $\Delta t = 10^{-4}$ for the time step, $\nu = 1$ and $M = 0.02$. We represent the amplitude of the velocity at different moments in figure 12.
}
\hfill \\

\hspace*{1.0cm}
\begin{tabular} {ccc}
\includegraphics[trim = 1cm 1cm 1cm 1cm, clip, scale=0.3]{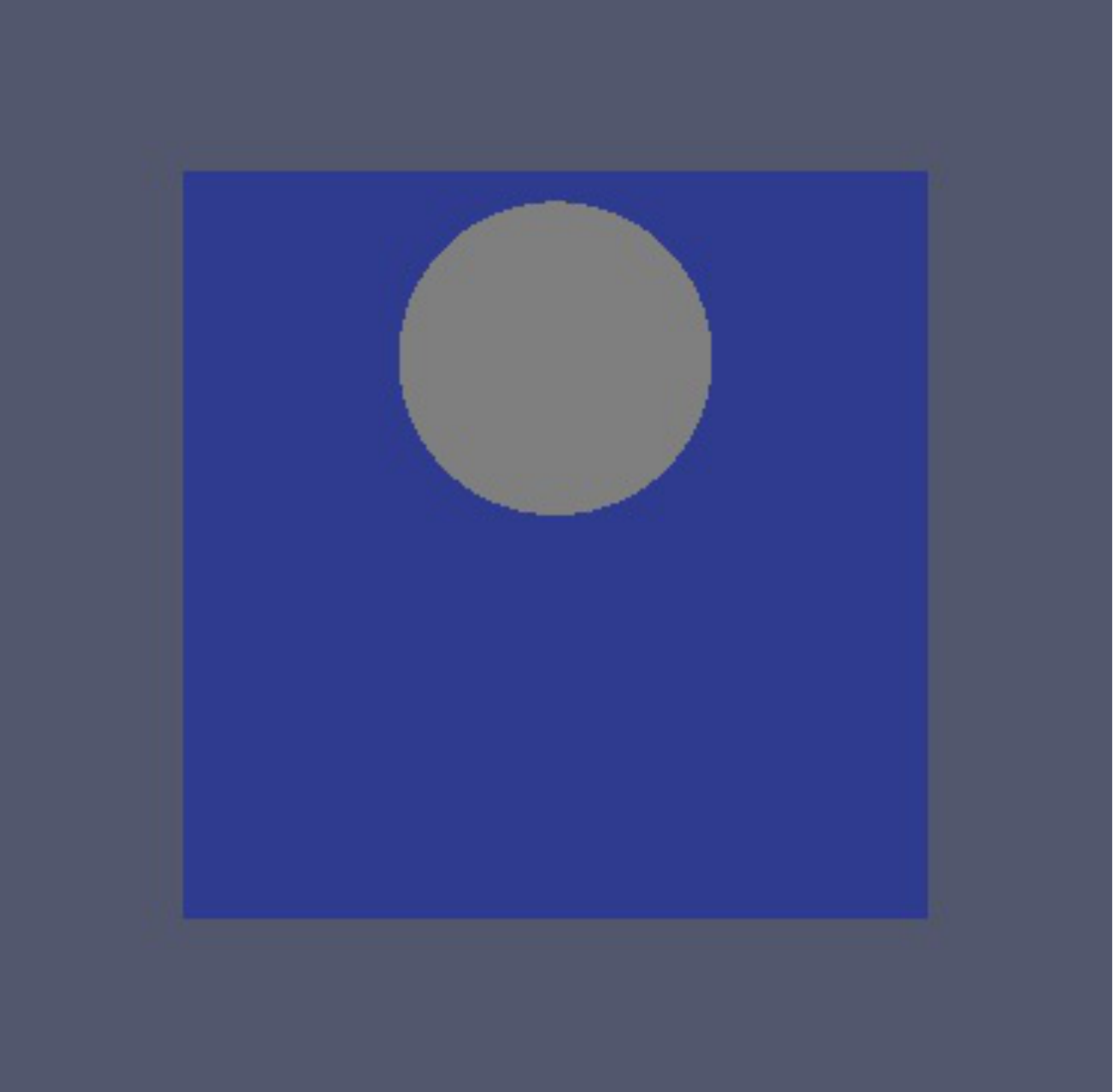} &
\includegraphics[trim = 1cm 1cm 1cm 1cm, clip, scale=0.3]{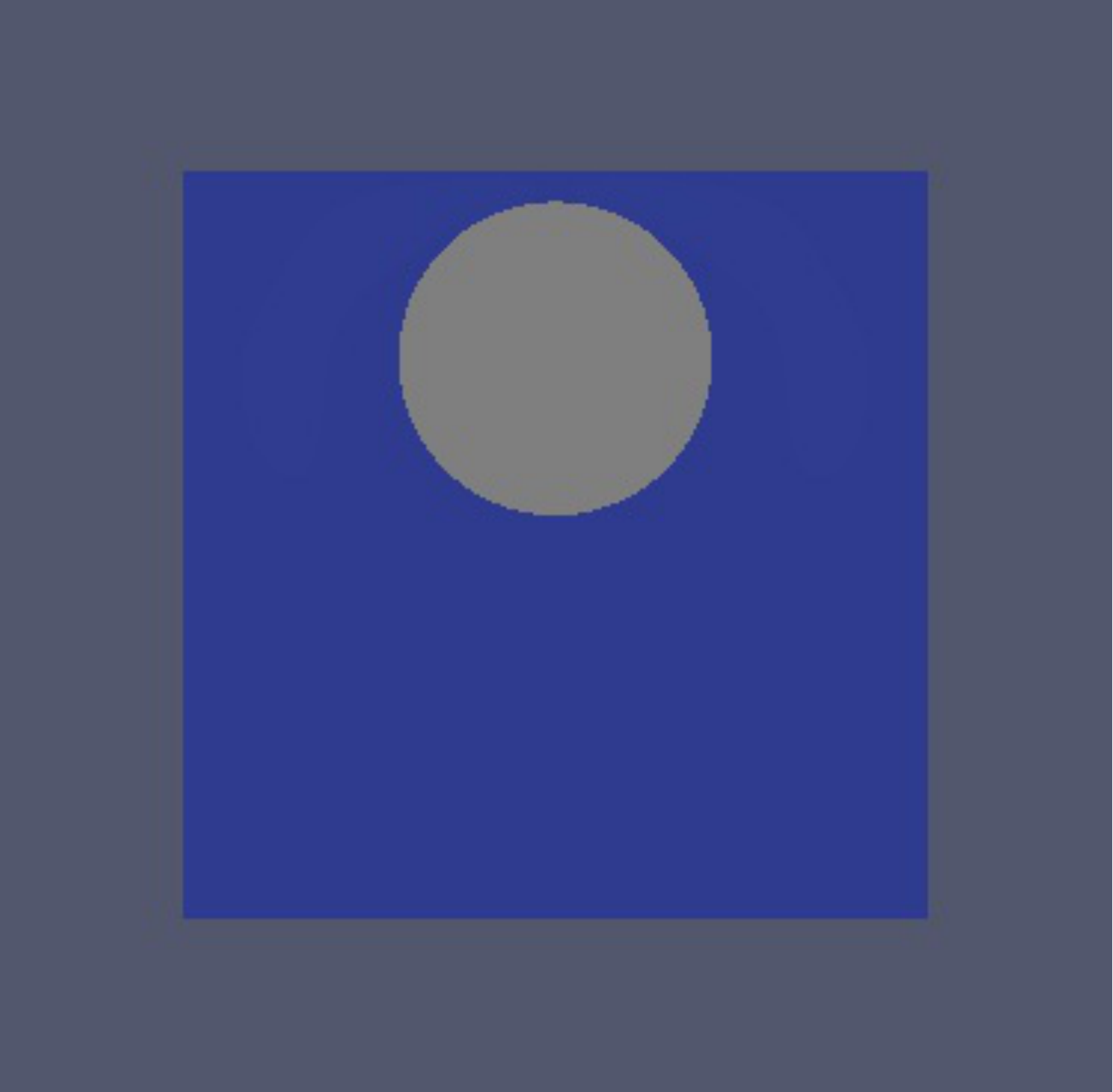} &
\includegraphics[trim = 1cm 1cm 1cm 1cm, clip, scale=0.3]{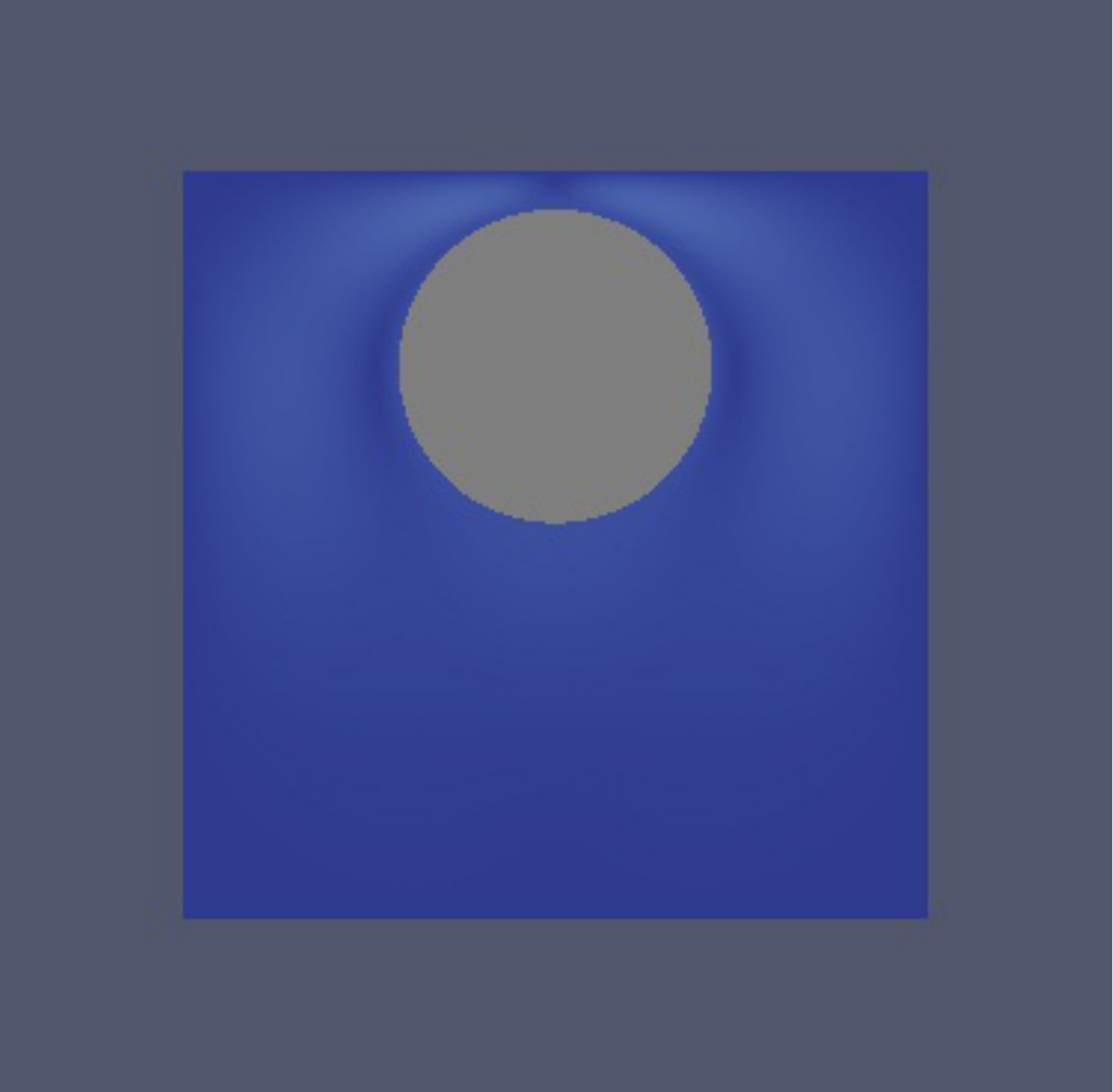}\\
 $t = 0$ & $t = 1$ & $t = 11$\\
\hfill \\
\includegraphics[trim = 1cm 1cm 1cm 1cm, clip, scale=0.3]{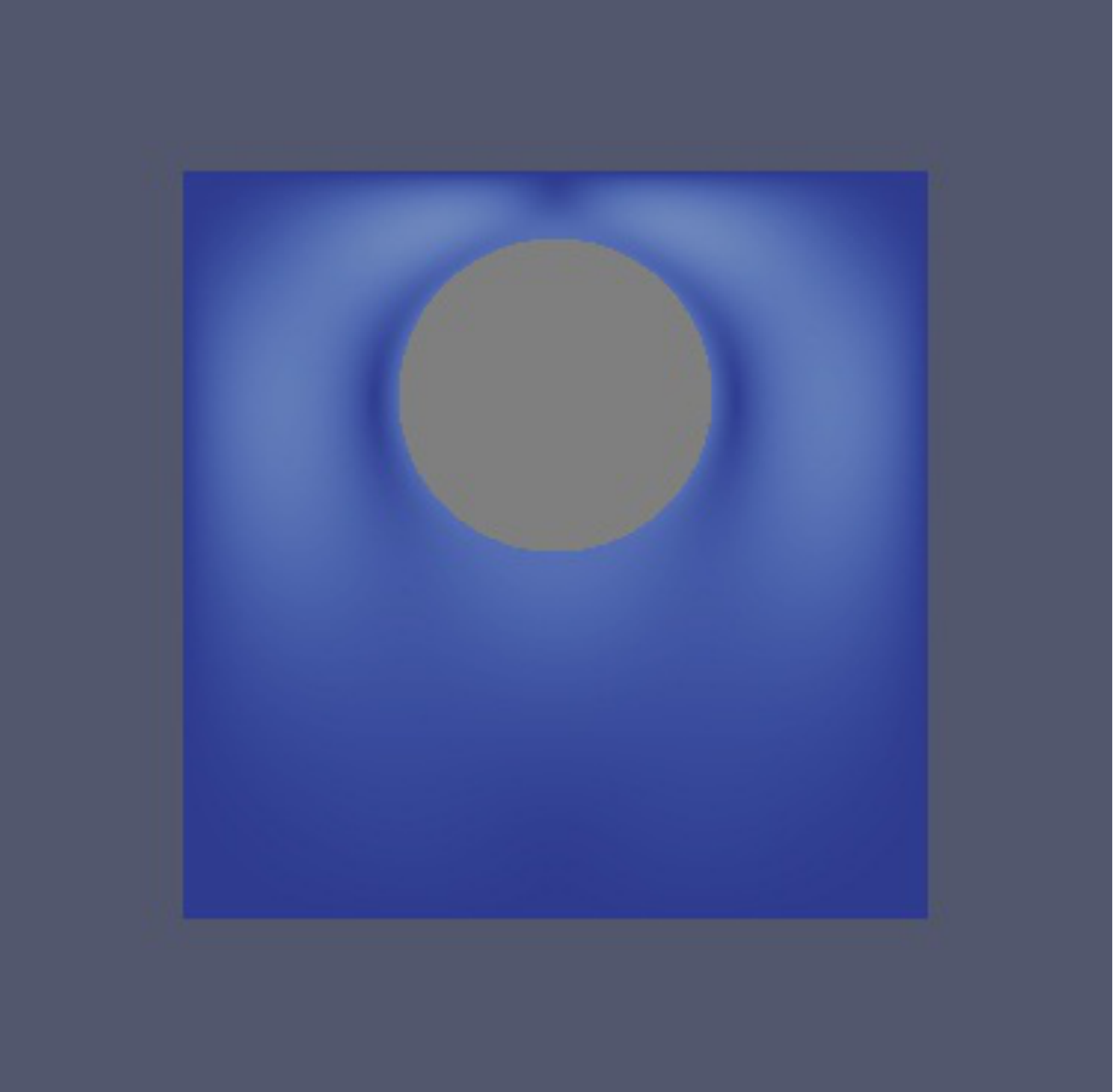} &
\includegraphics[trim = 1cm 1cm 1cm 1cm, clip, scale=0.3]{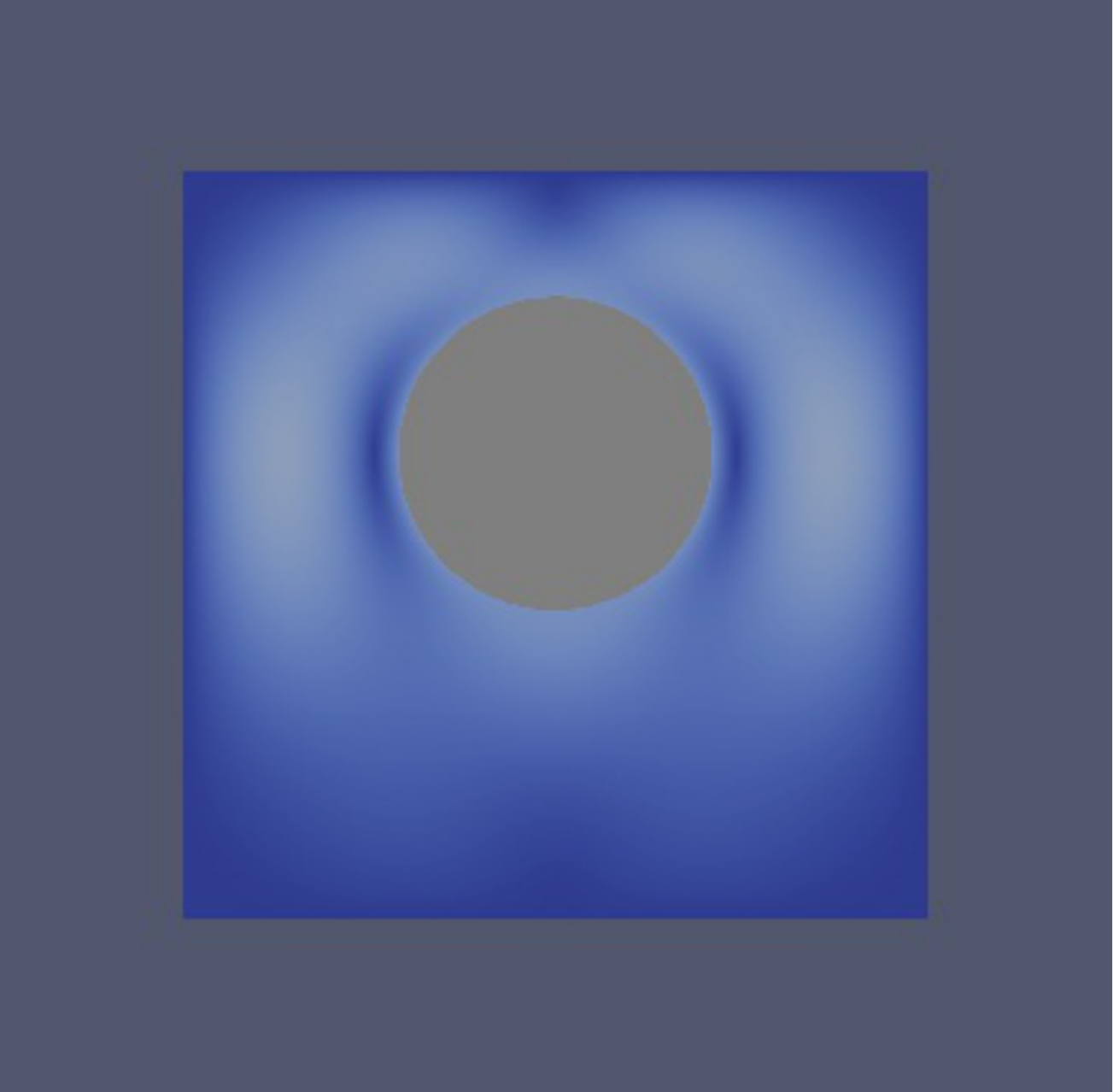} &
\includegraphics[trim = 1cm 1cm 1cm 1cm, clip, scale=0.3]{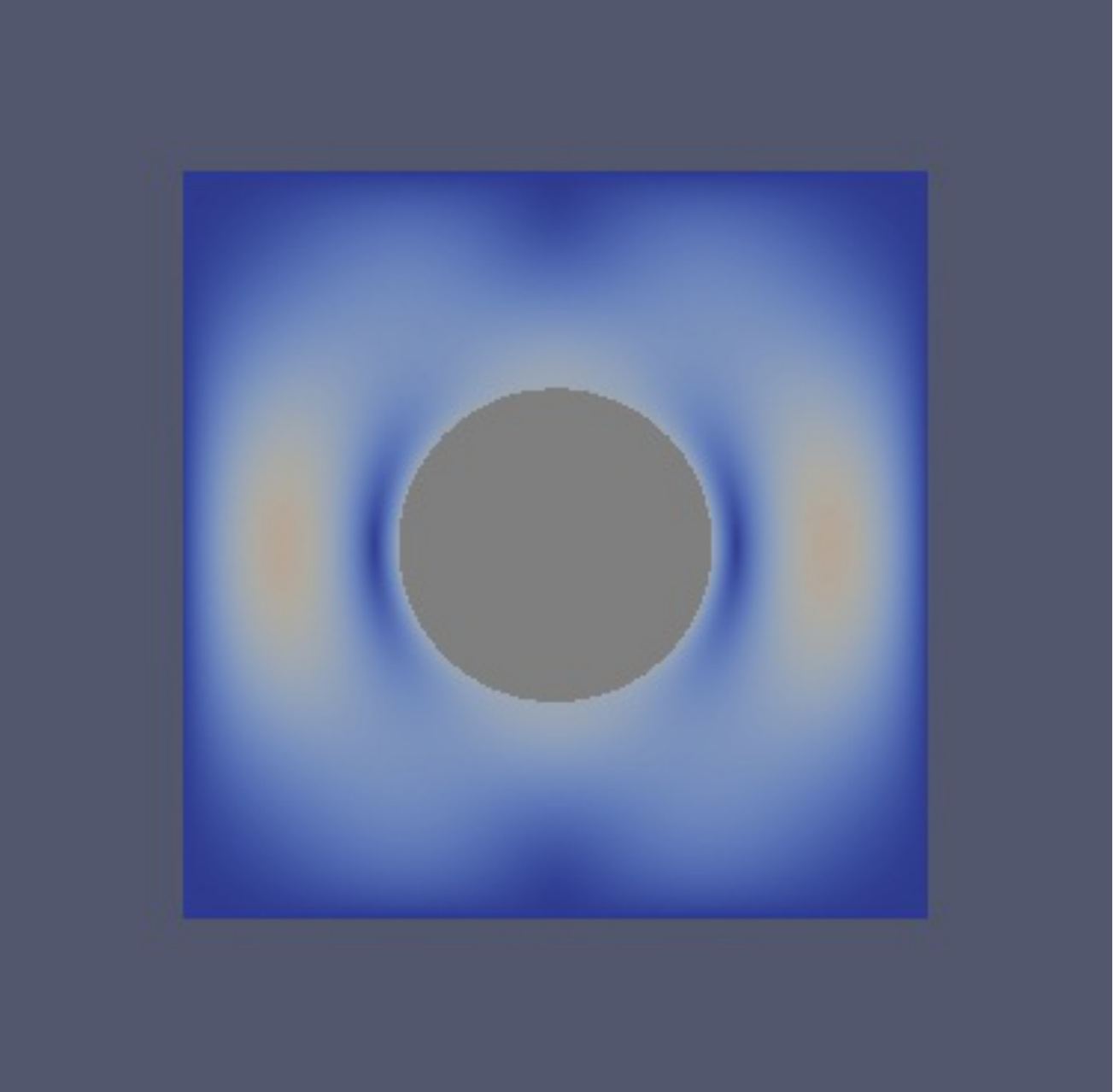}\\
 $t = 21$ & $t = 31$ & $t = 41$\\
\hfill \\
\includegraphics[trim = 1cm 1cm 1cm 1cm, clip, scale=0.3]{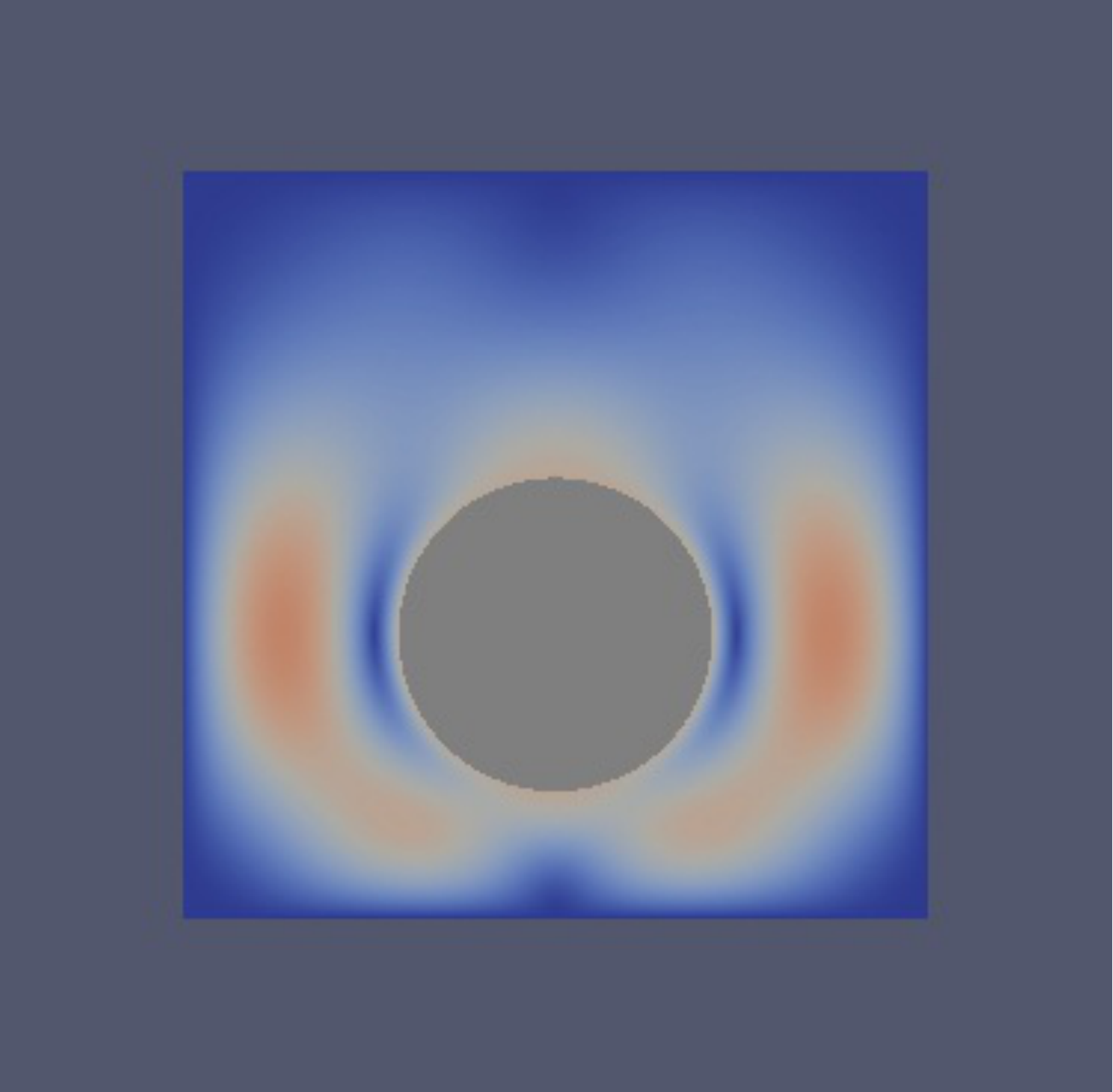} &
\includegraphics[trim = 1cm 1cm 1cm 1cm, clip, scale=0.3]{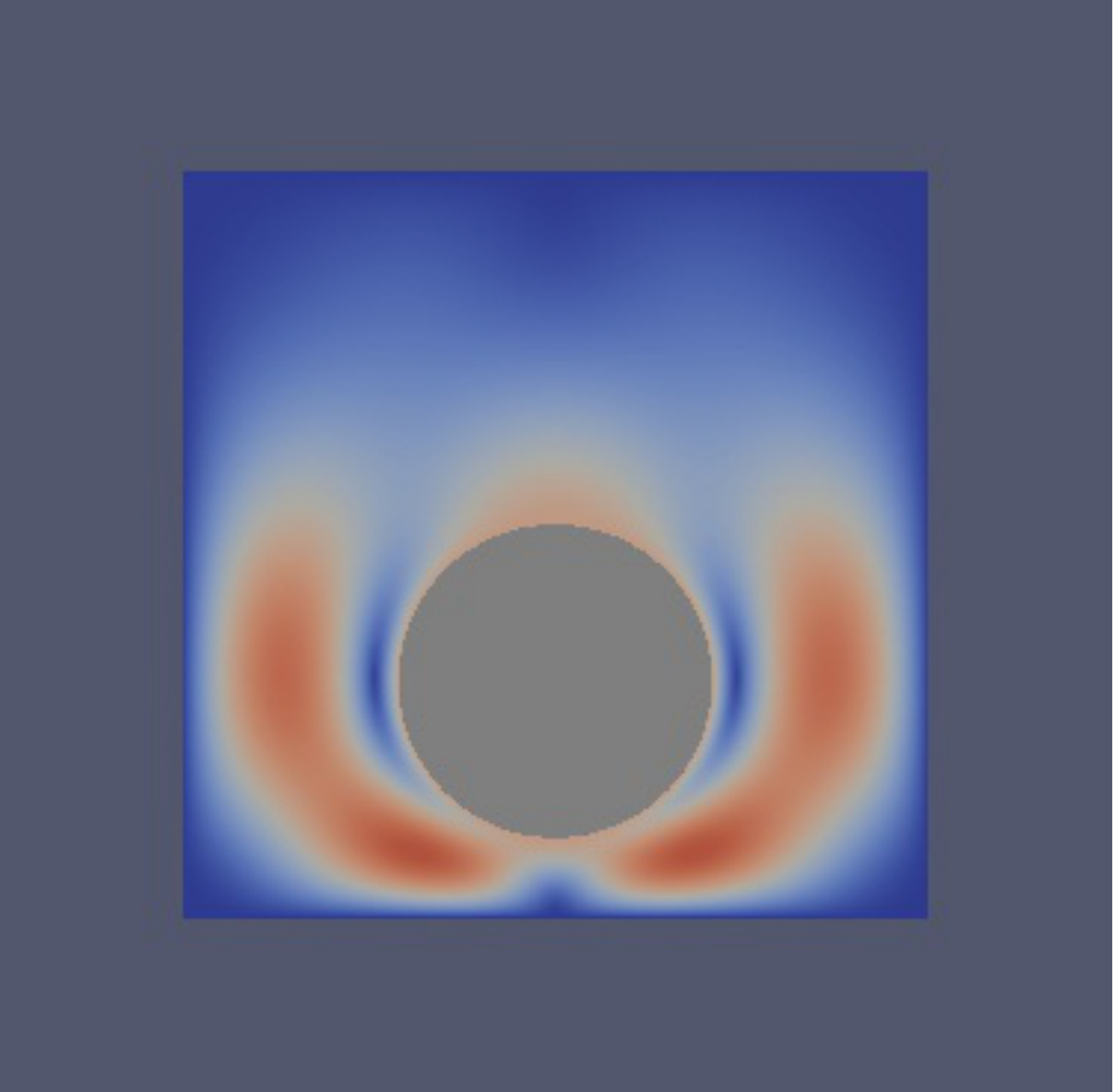} &
\includegraphics[trim = 1cm 1cm 1cm 1cm, clip, scale=0.3]{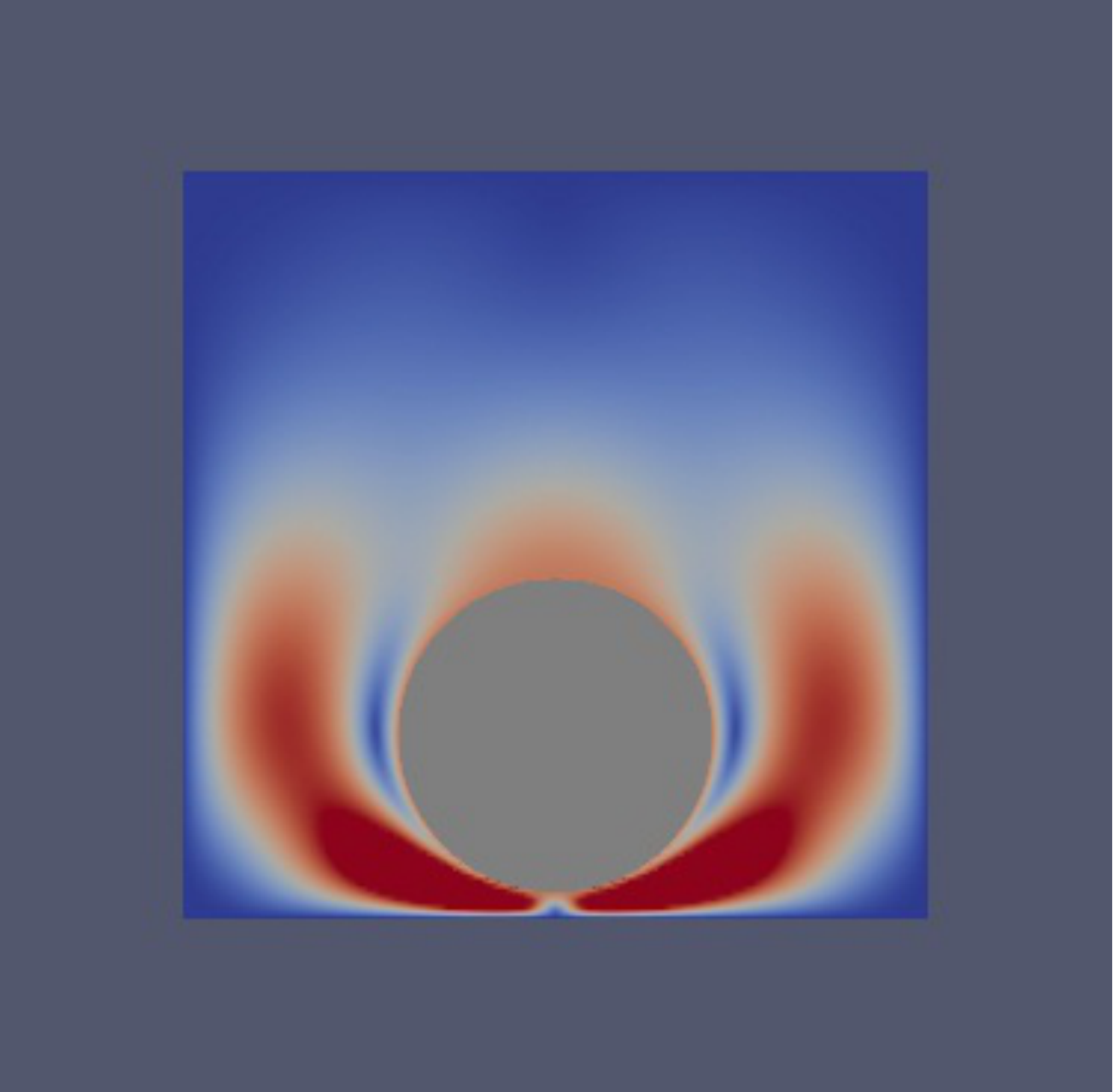}\\
$t = 48$  & $t = 51$  & $t = 54$\\
\hfill \\
\end{tabular}

\vspace*{-0.5cm}

\textcolor{black}{
\begin{figure}[!h]
\centering\caption{Simulation of the free fall of a ball in a Stokes flow.}
\end{figure}
}
\FloatBarrier

\textcolor{black}{
Note that this simulation cannot be carried out without the stabilization technique, because in that case the force that the fluid exerts on the solid is not well-computed. Note also that the contact between the ball and the floor would necessitate a special treatment that we do not develop here.
}

\textcolor{black}{
\section{Conclusion} \label{secCCL}
For Stokes problem which is the corner stone of computations in fluid dynamics, we have proposed a fictitious domain method based on extended finite element method. Dirichlet boundary conditions at the interface is made using Lagrange multiplier. Additional stabilization term is used to ensure an inf-sup  condition and to obtain an optimal convergence of the normal trace of the Cauchy stress tensor $\sigma(\bu,p)\bn$. The mathematical analysis is presented. We have carried out numerical simulations to compare the new method with the classical finite element approximation based on uncut mesh and with the same approach without the introduction of the stabilization term. Computations of convergence rates have been performed and have especially underlined the interest of the stabilization technique in order to compute a good approximation of the normal trace of the Cauchy stress tensor. Besides, this stabilization technique allows a robust behavior of this quantity when the position of the solid changes.\\
In a near future, we plan to perform simulations in an unsteady framework, by solving the incompressible Navier-Stokes equations in a domain where the solid is moving and deforming itself. Our method is particularly interesting in fluid-structure problems for which the role of the boundary is central, like for instance when the shape of the boundary is the unknown of a control problem.
}


\end{document}